\newcommand{\A}{\alpha}
\newcommand{\N}{\mathbb{N}}
\newcommand{\R}{\mathbb{R}}
\newcommand{\norm}[1]{\left\lVert#1\right\rVert}
\newcommand{\Vol}[0]{\text{Vol}}
\tikzset{cross/.style={cross out, draw=black, minimum size=2*(#1-\pgflinewidth), inner sep=0pt, outer sep=0pt},
%default radius will be 1pt. 
cross/.default={1pt}}
\theoremstyle{plain}
\newtheorem{Thm}{Theorem}[section]
\newtheorem{Lem}[Thm]{Lemma}
\newtheorem{Cor}[Thm]{Corollary}
\newtheorem{Prop}[Thm]{Proposition}
\newtheorem{Def}[Thm]{Definition}
\newtheorem{Rem}[Thm]{Remark}
\theoremstyle{plain}
\newtheorem{thm}{theorem}
\newtheorem{Assump}[thm]{Assumption}
\numberwithin{equation}{section}
\title{}
\date{}
\author{}
\begin{document}

\title[Long-time existence of the singular Yamabe flow]
{Long-time existence of Yamabe flow on singular spaces with positive Yamabe constant}

\author{J\o rgen Olsen Lye}
\address{Mathematisches Institut,
Universit\"at Oldenburg,
26129 Oldenburg,
Germany}
\email{jorgen.olsen.lye@uni-oldenburg.de}

\author{Boris Vertman}
\address{Mathematisches Institut,
Universit\"at Oldenburg,
26129 Oldenburg,
Germany}
\email{boris.vertman@uni-oldenburg.de}

\subjclass[2000]{53C44; 58J35; 35K08}
\date{\today}

\begin{abstract}
{In this work we establish long-time existence of the normalized Yamabe flow 
with positive Yamabe constant on a class of manifolds that includes 
spaces with incomplete cone-edge singularities. We formulate our results 
axiomatically, so that our results extend to general stratified spaces as well, provided 
certain parabolic Schauder estimates hold. The central analytic tool is a
parabolic Moser iteration, which yields uniform upper and lower bounds on both the solution 
and the scalar curvature.}
\end{abstract}

\maketitle
\tableofcontents

%%%%%%%%%%%%%%%%%%%%%%%%%%%%%%%%%
\section{Introduction and statement of the main results}\label{intro-section}
%%%%%%%%%%%%%%%%%%%%%%%%%%%%%%%%%

The Yamabe conjecture states that for any compact, smooth
Riemannian manifold $(M,g_0)$ there exists a constant scalar curvature metric, conformal
to $g_0$. The first proof of this conjecture was initiated by Yamabe \cite{Yamabe} and continued by Trudinger \cite{Trudinger}
Aubin \cite{Aubin}, and Schoen \cite{Schoen}. The proof is based on the calculus of variations and elliptic partial differential 
equations. An alternative tool for proving the conjecture is due to Hamilton \cite{Hamilton}: the normalized Yamabe flow of a Riemannian manifold $(M,g_0)$,
which is a family $g\equiv g(t), t\in [0,T]$ of Riemannian metrics on $M$ such 
that the following evolution equation holds
 \begin{equation}\label{eq:YF}
 \partial_t g=-(S-\rho)g, \quad \rho := \Vol_g(M)^{-1} \int_M S\, d\Vol_g.
 \end{equation}
Here $S$ is the scalar curvature of $g$, $\Vol_g(M)$ the total volume of $M$ with respect to $g$ and $\rho$ is the average scalar curvature of $g$.
The normalization by $\rho$ ensures that the total volume does not change along the flow.
Hamilton \cite{Hamilton} introduced the Yamabe flow and also showed its long time existence. 
It preserves the conformal class of $g_0$ and ideally should converge
to a constant scalar curvature metric, thereby establishing the Yamabe conjecture by parabolic methods. \medskip

Establishing convergence of the normalized Yamabe flow is intricate already in the setting of smooth, compact manifolds.
In case of scalar negative, scalar flat and locally conformally flat scalar positive cases, convergence is due to Ye \cite{Ye}.  
The case of a non-conformally flat $g_0$ with positive scalar curvature is delicate and has been studied by Brendle 
\cite{Brendle, BrendleYF}. More specifically, \cite[p. 270]{Brendle},  \cite[p. 544]{BrendleYF} invoke the positive mass theorem, which is 
where the dimensional restriction in \cite{Brendle} and the spin assumption in \cite[Theorem 4]{BrendleYF} come from. 
Assuming \cite{SY17} to be correct, \cite{Brendle} and \cite{BrendleYF} cover all closed manifolds which are not conformally equivalent to spheres.
\medskip

In the non-compact setting, our understanding is limited. On complete manifolds, long-time existence 
has been discussed in various settings by Ma \cite{Ma}, Ma and An \cite{MaAn}, and the recent contribution by Schulz \cite{Schulz}.
On incomplete surfaces, where Ricci and Yamabe flows coincide, Giesen and Topping \cite{Topping, Topping2}
constructed a flow that becomes instantaneously complete. \medskip

In this work, we study the Yamabe flow on a general class of spaces that includes incomplete spaces with 
cone-edge (wedge) singularities or, more generally, stratified spaces with iterated cone-edge singularities. This continues a program
initiated in \cite{ShortTime, LongTime}, where 
existence and convergence of the Yamabe flow has been established in case of negative Yamabe invariant. 
Here, we study the positive case and, utilizing methods of Akutagawa, Carron, Mazzeo \cite{ACM},
we establish long time existence of the flow under certain mild geometric assumptions. We don't attempt 
to prove convergence here, in view of \cite{Brendle, BrendleYF} and the fact that we don't have a substitute 
for the positive mass theorem in the singular setting. Our main result is as follows.

\begin{Thm}\label{main-thm}
Let $(M,g_0)$ be a Riemannian manifold of dimension $n= \dim M \geq 3$, 
such that the following four assumptions (to be made precise below) hold:
\begin{enumerate}
\item The Yamabe constant $Y(M,g_0)$ is positive;
\item $(M,g_0)$ is admissible and in particular satisfies a Sobolev inequality;
\item Certain parabolic Schauder estimates hold on $(M,g_0)$;
\item The initial scalar curvature $S_0$ admits certain H\"older regularity.
\end{enumerate}
Under these assumptions, the normalized Yamabe flow of $g_0$ exists within the space of 
admissible spaces, with infinite existence time. 
\end{Thm}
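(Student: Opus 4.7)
The plan is to follow the conformal-factor approach, combined with parabolic Moser iteration in the spirit of Akutagawa--Carron--Mazzeo \cite{ACM}, adapted to the flow setting. Writing $g(t)=u(t)^{4/(n-2)}g_0$ turns \eqref{eq:YF} into a quasilinear parabolic equation for $u$, schematically
\begin{equation*}
\partial_t(u^{(n+2)/(n-2)}) = \frac{n+2}{4}\bigl(c_n \Delta_0 u - S_0 u + \rho\, u^{(n+2)/(n-2)}\bigr),\quad c_n=\frac{4(n-1)}{n-2},
\end{equation*}
where $\Delta_0$ is the Laplacian of $g_0$. Short-time existence and uniqueness on a maximal interval $[0,T_{\max})$ within the admissible class follow from assumption (3) (parabolic Schauder estimates on $(M,g_0)$) together with assumption (4) (H\"older regularity of $S_0$), by a standard contraction-mapping argument. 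A continuation criterion, again driven by the Schauder theory, then tells us it suffices to prove uniform upper and lower bounds on $u$ and on the scalar curvature $S$ in order to conclude $T_{\max}=\infty$.

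Next, I would collect the monotonicity structure of the flow: volume is preserved by construction, the Yamabe energy is non-increasing, and $\rho(t)$ is non-increasing. Because $Y(M,g_0)>0$ (assumption (1)) and the flow stays in the conformal class, one obtains in particular $\rho(t)\ge 0$ with an explicit positive lower bound controlled by $Y(M,g_0)$. These a priori controls will feed into every iteration below.

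The analytic heart is a two-fold parabolic Moser iteration, made possible by the Sobolev inequality of assumption (2). First, testing the PDE for $u$ against powers $u^{p}$ and $u^{-p}$ and using Sobolev embedding, I would run the standard De Giorgi--Moser iteration in the parabolic form to promote integral controls (coming from the conserved volume and the energy monotonicity) to uniform $L^{\infty}$ bounds $0<c\le u\le C<\infty$ on $[0,T_{\max})$. The analogous iteration is then applied to the evolution equation of the scalar curvature, which along \eqref{eq:YF} takes the form
\begin{equation*}
\partial_t S = (n-1)\Delta_g S + S(S-\rho),
\end{equation*}
yielding uniform two-sided bounds on $S$. With $u$, $u^{-1}$ and $S$ bounded, the Schauder estimates of assumption (3) upgrade the solution to uniform $\mathcal{C}^{2+\A}_{\textup{ie}}$-bounds in space, uniformly in time, which lets us restart the short-time theory at $T_{\max}$ and contradict maximality unless $T_{\max}=\infty$.

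The principal obstacle, and the reason the positive Yamabe case is substantially harder than the negative one treated in \cite{ShortTime, LongTime}, is the reaction term $S(S-\rho)$ in the $S$-equation: it has the wrong sign and threatens finite-time blow-up of $S$. Closing the Moser iteration for $S$ therefore requires carefully balancing this nonlinearity against the Sobolev inequality and the positive lower bound on $\rho$ coming from $Y(M,g_0)>0$, while simultaneously controlling the conformal factor $u$ so that the iteration can be performed with respect to a fixed background geometry. A secondary, purely analytic difficulty is that standard cutoff and localization arguments must be compatible with the iterated cone-edge singularities; this is precisely what the axiomatic admissibility framework and the edge H\"older scale $\hho$ are designed to accommodate.
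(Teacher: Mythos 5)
Your skeleton matches the paper's architecture (short-time existence by contraction mapping from the Schauder assumptions, a priori bounds on $u$ and $S$, restart at the maximal time), but the two hardest steps are left as black boxes in a way that hides genuine gaps.

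First, the lower bound on $u$. You propose to test the $u$-equation against negative powers $u^{-p}$ and run a parabolic Moser iteration. This cannot be started: the conserved quantities of the flow (unit volume, i.e.\ $\norm{u}_{L^{2n/(n-2)}}=1$, and the monotone Yamabe energy) control only \emph{positive} powers of $u$, and there is no a priori integral bound on any negative power of $u$ to feed into the iteration. The paper's route is different and this is the key missing idea: one rearranges the flow equation at each fixed time into the \emph{elliptic} supersolution inequality $(-\Delta_0+P)u\geq 0$ with $P\in L^q(M)$, $q>n/2$, depending only on $S_0$ and $T$; then (i) the weak Harnack inequality on balls gives $\Vol_g(B_{2R})\leq C\inf_{B_R}u^{2n/(n-2)}$, (ii) the admissibility covering property \eqref{eq:Exhaustion} together with the volume normalization forces at least one ball on which $u$ is uniformly bounded below, (iii) a Poincar\'e-type inequality then upgrades this local bound to a uniform $H^1$ bound on $u^{-\delta}$ for small $\delta$, and (iv) elliptic Moser iteration \`a la Akutagawa--Carron--Mazzeo converts that into $\norm{u^{-\delta}}_{L^\infty}\leq C(T)$. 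Without steps (i)--(iii) there is no starting point for any iteration on $u^{-1}$.

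Second, the upper bound on $S$. You correctly identify that the reaction term $S(S-\rho)$ is the obstruction, but "carefully balancing this nonlinearity against the Sobolev inequality" is exactly the point that needs an argument: a parabolic Moser iteration for $\partial_t S=(n-1)\Delta S+S(S-\rho)$ can absorb the quadratic term only once one has a uniform-in-time bound on $\norm{S}_{L^q(M,g)}$ for some $q>n/2$ (the paper uses $q=\tfrac{n^2}{2(n-2)}$). Obtaining this starting bound is the crux. It rests on a structural cancellation: testing the $S$-equation with (a regularized version of) $S^{2\beta-1}$ at the critical exponent $\beta=n/4$, the dangerous cubic contribution from $S\cdot S^{2\beta-1}\cdot S$ exactly cancels against the term coming from the evolution of the volume form $\partial_t d\Vol_g=-\tfrac n2(S-\rho)d\Vol_g$, yielding a time-uniform $L^{n/2}$ bound and a space-time $L^{n^2/(2(n-2))}$ bound, which are then bootstrapped to the uniform $L^{n^2/(2(n-2))}$ bound via a Gronwall argument. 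Your sketch does not identify this exponent or mechanism, and without it the iteration you describe does not close. (Two smaller points: the paper's bounds are uniform on $[0,T]$ for each finite $T$ but grow with $T$ -- sufficient for long-time existence, not for convergence; and the precise hypotheses include $S_0\in L^{n^2/(2(n-2))}(M)$ and $(S_0)_-\in L^\infty(M)$ beyond the H\"older regularity, both of which are used essentially in the bounds above.)
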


\noindent Examples, where the assumptions of the theorem are satisfied, include spaces with incomplete wedge 
singularities. More general stratified spaces with iterated cone-edge metrics are also covered, provided
parabolic Schauder estimates continue to hold in that setting. \medskip

\noindent We now proceed with explaining the assumptions in detail.

%%%%%%%%%%%%%%%%%%%%%%%%%%%%%%%%%
\subsection{Normalized Yamabe flow and Yamabe constant}
%%%%%%%%%%%%%%%%%%%%%%%%%%%%%%%%%

Consider a Riemannian manifold $(M,g_0)$, with $g_0$ normalized such that the total volume $\Vol_{g_0}(M)=1$.
The Yamabe flow \eqref{eq:YF} preserves the conformal class
of the initial metric $g_0$ and, assuming $\dim M = n \geq 3$, we can write $g=u^{\frac{4}{n-2}}g_0$ 
for some function $u>0$ on $M_T = M \times [0,T]$ for some upper time limit $T>0$. Then the normalized Yamabe flow equation
can be equivalently written as an equation for $u$
\begin{equation}
\partial_t \left(u^{\frac{n+2}{n-2}}\right) = \frac{n+2}{4} \left(\rho u^{\frac{n+2}{n-2}}- L_0(u)\right),
\quad L_0 := S_0-\frac{4(n-1)}{n-2} \Delta_0,
\label{eq:YamabeFlow}
\end{equation}
where $L_0$ is the conformal Laplacian of $g_0$, defined in terms of the scalar curvature $S_0$
and the Laplace Beltrami operator $\Delta_0$ associated to the initial metric $g_0$. 
The scalar curvature $S$ of the evolving metric $g$ can be written $S=u^{-\frac{n+2}{n-2}} L_0(u)$, 
and the volume form of $g=u^{\frac{4}{n-2}}g_0$ is given by $d\Vol_g=u^{\frac{2n}{n-2}} d\mu$,
where we write $d\mu := d\Vol_{g_0}$ for the time-independent initial volume form. 
One computes 
\begin{align}\label{eq:DVol}
\partial_t d\Vol_g &=-\frac{n}{2} (S-\rho)\, d\Vol_g.
\end{align}
Hence, the total volume of $(M,g)$ is constant and thus equal to $1$ along the flow.
The average scalar curvature then takes the form
\begin{equation}
\rho=\int_M S\, d\Vol_g= \int_M L(u) u^{-\frac{n+2}{n-2}} u^{\frac{2n}{n-2}} \, d\mu= \int_{M}\frac{4(n-1)}{n-2}  \vert \nabla u\vert^2+S_0 u^2\, d\mu.
\label{eq:rho}
\end{equation} 
Explicit computations lead to the following evolution equation for the average scalar curvature
\begin{align} 
\label{eq:rhoEvol}
\partial_t \rho &=-\frac{n-2}{2} \int_M \left( \rho-S\right)^2 u^{\frac{2n}{n-2}}\, d\mu.
\end{align}
The latter evolution equation in particular implies that $\rho \equiv \rho(t)$ is non-increasing 
along the flow. We conclude the exposition with defining the Yamabe constant of $g_0$, which 
incidentally provides a lower bound for $\rho$.  Let $u$ be a solution of \eqref{eq:YamabeFlow}. We define the $L^q(M)$ spaces with respect to the integration measure $d\mu$. \medskip

\noindent We define the first Sobolev space $H^1(M)$ as the space of all 
$v: M \to \R$ such that first Sobolev norm, defined with respect to $d\mu$
and the pointwise norm associated to $g_0$
$$
\| v \|^2_{H^1(M)} := \int_M \nu^2 \, d\mu +\int_M \vert \nabla \nu\vert^2\, d\mu < \infty.
$$
Similary, we define $H^1(M,g)$ by using $d\Vol_g$ instead of $d\mu$, and the 
pointwise norm associated to $g$. If $u$ and $u^{-1}$ are both bounded, one easily checks $H^1(M) = H^1(M,g)$.
\medskip

\noindent We define the Yamabe invariant of $g_0$ as follows
\begin{equation}\begin{split}
Y(M,g_0) &:= \inf_{v\in H^1(M)\setminus \{0\}}  
\frac{\int_{M} \frac{4(n-1)}{n-2} \vert \nabla v\vert^2+S_0 v^2\, d\mu}{\norm{v}^2_{L^{\frac{2n}{n-2}}(M)}}
\\ &\leq \int_{M}\frac{4(n-1)}{n-2}  \vert \nabla u\vert^2+S_0 u^2\, d\mu \stackrel{\eqref{eq:rho}}{=} \rho,
\label{eq:YamabeConst}
\end{split}\end{equation}
where in the inequality we have used that for any solution $u$ of \eqref{eq:YamabeFlow}, 
$\norm{u}_{L^{\frac{2n}{n-2}}(M)} = d\Vol_g(M) \equiv 1$.
How one proceeds will depend heavily on the sign of the Yamabe constant. In this paper we will assume $Y(M,g_0)>0$. 
In particular, the average curvature $\rho$ is then positive and uniformly bounded away from zero along the normalized Yamabe flow.
\smallskip

\begin{Assump}\label{Y-assump}
The Yamabe constant $Y(M,g_0)$ is positive.
\end{Assump} \ \\[-8mm]

%%%%%%%%%%%%%%%%%%%%%%%%%%%%%%%%%
\subsection{A Sobolev inequality and other admissibility assumptions}
%%%%%%%%%%%%%%%%%%%%%%%%%%%%%%%%%
The Moser iteration arguments in this paper are strongly motivated by the related work 
of Akutagawa, Carron and Mazzeo \cite{ACM} on the Yamabe problem on stratified spaces.
Thus, similar to \cite{ACM}, we impose certain admissibility assumptions, which are 
naturally satisfied by certain compact stratified spaces with iterated cone-edge metrics. 

\begin{Def}\label{admissible}
Let $(M,g_0)$ be a smooth Riemannian manifold of dimension $n$. We call 
$(M,g_0)$ admissible, if it satisfies the following conditions.
\begin{itemize}
\item $(M,g_0)$ with volume form $d\mu = d\Vol_{g_0}$ has finite volume $\Vol_{g_0}(M)<\infty$. 
\item For any $\varepsilon>0$, there exist finitely many open balls $B_{2R_i}(x_i)\subset M$ such that 
\begin{align}
\Vol_{g_0} \left(M \backslash \bigcup_i B_{R_i}(x_i)\right) \leq \varepsilon.
\label{eq:Exhaustion}
\end{align}
\item Smooth, compactly supported functions $C^\infty_c(M)$ are dense in 
$H^1(M)$\footnote{This can be phrased as $H^1_0(M)=H^1(M)$.
Note that this rules out $M$ being the interior of a manifold with a codimension 1 boundary.}.
\item $(M,g_0)$ admits a Sobolev inequality of the following kind. Defining $L^q(M)$ spaces with respect to $d\mu$, 
there exist $A_0,B_0>0$ such that for all $f\in H^1(M)$
\begin{equation}
\norm{f}_{L^{\frac{2n}{n-2}}(M)}^2 \leq A_0 \norm{\nabla f}_{L^2(M)}^2 +B_0 \norm{f}_{L^2(M)}^2.
\label{eq:Sobolev}
\end{equation} 
\end{itemize}
\end{Def}

The main examples we have in mind are closed manifolds\footnote{This includes finite volume, complete manifolds, since (see \cite[Lemma 3.2, pp. 18-19]{Sobolev}, \cite[Remark 2), pp. 56-57]{Sobolev}) any finite volume, complete manifold satisfying the Sobolev inequality is compact.}  and regular parts of smoothly stratified spaces,
endowed with iterated cone-edge metrics. 
See \cite[Section 2.1]{ACM} for a definition of the latter. That the Sobolev inequality holds 
in this case is shown in \cite[Proposition 2.2]{ACM}. Note that the list of admissibility assumptions does not contain compactness. 
Nor do we specify explicitly how the metric $g_0$ looks near the singular strata of $\overline{M}$, in case of stratified spaces.
Restrictions on the local behaviour of the metric will instead be coded in $L^q$-data, like requiring the initial scalar curvature $S_0$ to be in 
$L^q(M)$ for suitable $q>0$. These requirements are stated in the theorems below, and will vary from statement to statement.
\smallskip

\begin{Assump}\label{admissible-assump}
$(M,g_0)$ is an admissible Riemannian manifold.
\end{Assump} \ \\[-8mm]

\noindent In what follows we want to relate the assumption of the Sobolev inequality 
\eqref{eq:Sobolev} in Definition \ref{admissible} to positivity of the Yamabe constant $Y(M,g_0)$.

\begin{Prop}\label{SY}
Assume $S_0\in L^{\infty}(M)$ and $Y(M,g_0)>0$. 
Then \eqref{eq:Sobolev} holds. 
\end{Prop}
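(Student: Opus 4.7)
The proof is a direct unpacking of the definition of the Yamabe constant together with the $L^\infty$ bound on $S_0$. The plan is to start from the variational characterization \eqref{eq:YamabeConst} and rearrange it into the Sobolev inequality form.

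First, for any $f \in H^1(M) \setminus \{0\}$ the definition of $Y(M,g_0)$ gives
\[
Y(M,g_0) \, \|f\|_{L^{2n/(n-2)}(M)}^2 \ \leq \ \int_M \frac{4(n-1)}{n-2}\,|\nabla f|^2 \, d\mu + \int_M S_0 f^2 \, d\mu .
\]
Next I would use $S_0 \in L^\infty(M)$ to control the zeroth-order term by the pointwise inequality $S_0 f^2 \leq \|S_0\|_{L^\infty(M)} f^2$, which holds regardless of the sign of $S_0$ since we can bound $S_0 \leq |S_0| \leq \|S_0\|_{L^\infty}$.

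Finally, since $Y(M,g_0) > 0$ by assumption, I would divide through by $Y(M,g_0)$ to obtain
\[
\|f\|_{L^{2n/(n-2)}(M)}^2 \ \leq \ \frac{1}{Y(M,g_0)} \left( \frac{4(n-1)}{n-2}\,\|\nabla f\|_{L^2(M)}^2 + \|S_0\|_{L^\infty(M)} \|f\|_{L^2(M)}^2 \right),
\]
which is exactly \eqref{eq:Sobolev} with constants
\[
A_0 = \frac{4(n-1)}{(n-2)\,Y(M,g_0)}, \qquad B_0 = \frac{\|S_0\|_{L^\infty(M)}}{Y(M,g_0)} .
\]
There is essentially no obstacle: the statement is a one-line rearrangement of the variational definition of $Y(M,g_0)$. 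The only point to notice is that $S_0$ is not assumed nonnegative, which is handled by the elementary bound $S_0 \leq \|S_0\|_{L^\infty}$. No density or approximation argument is needed because the Yamabe quotient is already taken over all of $H^1(M)$.
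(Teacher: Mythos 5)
Your proof is correct and is essentially the same as the paper's: both unpack the variational definition \eqref{eq:YamabeConst}, bound the zeroth-order term using $S_0 \leq \|S_0\|_{L^\infty(M)}$, and divide by the positive constant $Y(M,g_0)$ to read off $A_0$ and $B_0$. No discrepancies.
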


\begin{proof}
Indeed, it follows directly from the definition of 
$Y(M,g_0)$ in \eqref{eq:YamabeConst} that
\[
\norm{f}^2_{L^{\frac{2n}{n-2}}(M)}\leq \frac{1}{Y(M,g_0)} \left( \frac{4(n-1)}{n-2}
\norm{\nabla f}^2_{L^2(M)}+\norm{S_0}_{L^\infty(M)} \norm{f}^2_{L^2(M)}\right)
\]
holds for all $f\in H^1(M)$. This is indeed the Sobolev inequality \eqref{eq:Sobolev}.
\end{proof}

%%%%%%%%%%%%%%%%%%%%%%%%%%%%%%%%%
\subsection{Parabolic Schauder estimates and short-time existence}\label{short-section}
%%%%%%%%%%%%%%%%%%%%%%%%%%%%%%%%%

Our proof requires intricate arguments involving the heat operator and its mapping 
properties, as is seen in the previous work by the second author jointly with Bahuaud 
\cite{ShortTime, LongTime} in the setting of spaces with incomplete wedge singularities. Here, we 
axiomatize these arguments into a definition of certain parabolic Schauder esimates,
having in mind further generalizations to stratified spaces. 

\begin{Def}\label{Schauder-def} $(M,g_0)$ satisfies parabolic Schauder estimates, if 
there is a sequence of Banach spaces $\{C^{k,\A} \equiv C^{k,\A}(M\times [0,T])\}_{\, k\in \N_0}$
of continuous functions on $M\times [0,T]$, for some $\A \in (0,1)$ and any $T>0$, with the following 
properties.

\begin{enumerate}
\item \textbf{Algebraic properties of the Banach spaces:} \medskip

\begin{enumerate}
\item For any $k\in \N_0$, the constant function $1 \in C^{k,\A}(M\times [0,T])$.
\medskip 

\item For any $k\in \N_0$ and any $u \in C^{k,\A}(M\times [0,T])$ uniformly bounded 
away from zero, we have for the inverse $u^{-1} \in C^{k,\A}(M\times [0,T])$. \medskip

\item For any $k\in \N_0$ we have $C^{k+1,\A}(M\times [0,T]) \subseteq C^{k,\A}(M\times [0,T])$.
\medskip

\item For any $k\geq 2$ and $\ell \leq k$ we have $C^{k,\A}\cdot C^{\ell,\A} \subseteq C^{\ell,\A}$.
Writing $\| \cdot \|_{\ell,\A}$ for the norm on $C^{\ell,\A}$ we have a uniform 
constant $C_{\ell,\A}$, such that for any $u \in C^{k,\A}$ and $v \in C^{\ell,\A}$
\begin{equation}\label{quadratic} \begin{split}
&\| u \cdot v \|_{\ell,\A} \leq C_{\ell,\A} \| u \|_{k,\A} \| v \|_{\ell,\A},  \\
%&\| u^2 - v^2 \|_{k,\A} \leq C_{k,\A} \left( \| u \|_{k,\A} + \| v \|_{k,\A} \right) \| u-v \|_{k,\A}.
\end{split} \end{equation}
\end{enumerate}

\item \textbf{Regularity properties of the Banach spaces:} \medskip

\begin{enumerate}

\item We have the following inclusions\footnote{The space $C^{1,\A}$ introduced 
in \cite{ShortTime} does not specify regularity under time differentiation. However, one can extend its definition
such that $\partial_{\sqrt{t}} u \equiv 2 \sqrt{t} \partial_t u \in C^{0,\A}$
for any $u\in C^{1,\A}$. This does not affect the other arguments in \cite{ShortTime}
and \cite{LongTime}.} 
\begin{equation}\label{LH}
\begin{split}
&C^{0,\A}(M\times [0,T])\subseteq C^0([0,T],L^2(M)), \\
&C^{1,\A}(M\times [0,T])\subseteq C^0([0,T],H^1(M)) \cap C^1((0,T),H^1(M)), \\
&C^{2,\A}(M\times [0,T])\subseteq C^1((0,T),H^1(M)) \cap L^\infty(M \times [0,T]).
\end{split}\end{equation}
Moreover, for any $u \in C^{0,\A}(M\times [0,T])$ and any fixed $p\in M$, 
the evaluation $u(p,\cdot)$ still lies in $C^{0,\A}$. The map $M\ni p \mapsto \| u(p,\cdot) \|_{0,\A}$ 
is again $L^2(M)$. \medskip

\item If $C^{k,\A}([0,T]) \subset C^{k,\A}(M\times [0,T])$
consists of functions that are constant on $M$, then the spaces $C^{2k,\A}([0,T])$ are characterized as follows
\begin{equation}\label{CT}
C^{2k,\A}([0,T]) = \{u \in C^{0,\A}([0,T]) \mid \partial_t^{k} u \in C^{0,\A}([0,T])\}.
\end{equation}

\item For any $k \in \N_0$ the following maps are bounded
\begin{equation}\label{DT}
\begin{split}
&\partial_t, \Delta_0: C^{k+2,\A}(M\times [0,T]) \to C^{k,\A}(M\times [0,T]), \\
&\nabla: C^{k+1,\A}(M\times [0,T]) \to C^{k,\A}(M\times [0,T]).
\end{split}
\end{equation}
 
%\item There exists a self-adjoint extension of $\Delta_0$ with domain $\mathcal{D}(\Delta_0)$, such that 
%\begin{equation}\label{Domain}
%C^{2,\A} (M\times [0,T]) \subseteq \mathcal{D}(\Delta_0).
%\end{equation}

\end{enumerate}

\item \textbf{Weak maximum principle for elements of the Banach spaces:} \medskip

\begin{enumerate}
\item Any $u \in C^{2,\A}(M\times [0,T])$ satisfies a weak maximum principle, that is for any
Cauchy sequence $\{q_\ell\}_{\ell \in \N} \subset M$ we have the following 
\begin{equation}\label{weak-max}
 \inf_M u = \lim_{\ell\to \infty} u(q_\ell) 
\Rightarrow \lim_{\ell\to \infty} (\Delta_0 u)(q_\ell) \geq 0. 
\end{equation}
In case the Cauchy sequence $\{q_\ell\}_{\ell \in \N}$ converges to an interior point $p \in M$,
where $u$ attains a global minimum, we have $\Delta_0 u(p)\geq 0$. \medskip
\end{enumerate}

\item \textbf{Mapping properties of the heat operator:} \medskip

\begin{enumerate}
\item The heat operator $e^{t\Delta_0}$ admits following mapping properties
\begin{equation}\label{mapping-heat}
\begin{split}
&e^{t\Delta_0} : C^{k,\A}(M\times [0,T]) \to C^{k+2,\A}(M\times [0,T]), \\
&e^{t\Delta_0} : C^{k,\A}(M\times [0,T]) \to t^\A C^{k+1,\A}(M\times [0,T]), \\
&e^{t\Delta_0} : L^\infty(M\times [0,T]) \to C^{1,\A}(M\times [0,T]).
\end{split}
\end{equation}
\end{enumerate}
If $e^{t\Delta_0}$ acts without convolution in time, then we have a bounded map
\begin{equation}\label{mapping-heat2}
e^{t\Delta_0}: C^{k,\A}(M) \to C^{k,\A}(M\times [0,T]).
\end{equation}

\item \textbf{Mapping properties of other solution operators:} \medskip

\begin{enumerate}

\item For any positive $a\in C^{1,\A}(M\times [0,T])$, uniformly bounded away from zero, there is 
a solution operator $Q$ for $(\partial_t - a \cdot \Delta_0) u = f, u(0)=0$, 
such that 
\begin{equation}\label{Q}
Q : C^{0,\A}(M\times [0,T]) \to C^{2,\A}(M\times [0,T]).
\end{equation}
If $a\in C^{2,\A}$, then additionally, $Q : C^{1,\A} \to C^{3,\A}$ is bounded.
\medskip

\item For any positive $a\in C^{1,\A}(M\times [0,T])$, uniformly bounded away from zero, there is 
a solution operator $R$ for $(\partial_t - a \cdot \Delta_0) u = 0, u(0)=f$, 
such that 
\begin{equation}\label{R}
R : C^{2,\A}(M) \to C^{2,\A}(M\times [0,T]),
\end{equation}
where $C^{k,\A}(M)$ denotes the subspace of $C^{k,\A}(M\times [0,T])$
consisting of time-independent functions. If $a\in C^{2,\A}$, then additionally, 
$R : C^{3,\A}(M) \to C^{3,\A}(M\times [0,T])$ is bounded.
\end{enumerate}
\end{enumerate}

\end{Def}

Clearly, parabolic Schauder estimates hold on smooth compact Riemannian manifolds. 
By \cite{ShortTime, LongTime} a manifold with a wedge singularity satisfies the 
parabolic Schauder estimates\footnote{In fact in the mapping properties of solution operators $Q$ and
$R$ we require here less than in \cite{LongTime}: in case $a\in C^{2,\A}$ we only ask for 
$Q : C^{1,\A} \to C^{3,\A}$ and $R : C^{3,\A}(M) \to C^{3,\A}$, while in \cite{LongTime} these
additional mapping properties are proved for one order higher.}, assuming that the wedge metric is feasible in the
sense of \cite[Definition 2.2]{LongTime}. The proof is based on the microlocal heat kernel 
description in \cite{MazVer}. Note that the choice of Banach spaces is not canonical, and one can e.g. 
use the scale of weighted H\"older spaces as in \cite{Ver-Ricci} instead. In view of the recent work by Albin and Gell-Redman 
\cite{AGR17}, we expect same parabolic Schauder estimates to hold on general 
stratified spaces with iterated cone-wedge metrics.

\begin{Assump}\label{Schauder-assump}
$(M,g_0)$ satisfies parabolic Schauder estimates.
\end{Assump} \ \\[-8mm]

\noindent Using parabolic Schauder estimates, we can prove short time existence and regularity of the 
renormalized Yamabe flow, exactly as in \cite[Theorem 1.7 and 4.1]{ShortTime} and by a slight adaptation of 
\cite[Proposition 4.8]{LongTime}.

\begin{Thm}\label{short}
Let $(M,g_0)$ satisfy parabolic Schauder estimates. 
Assume moreover that the scalar curvature $S_0$ of $g_0$ lies in $C^{1,\A}(M)$. 
Then the following holds.
\begin{enumerate}
\item The Yamabe flow 
\eqref{eq:YamabeFlow} admits for some $T>0$ 
sufficiently small, a solution 
$$
u \in C^{2,\A}(M\times [0,T]) \subseteq C^1((0,T),H^1(M)) \cap L^\infty(M \times [0,T]),
$$
that is positive and uniformly bounded away from zero\footnote{Later on, we will prove uniform lower 
bounds on $u$ for any finite $T>0$.}. \medskip

\item If a solution $u \in C^{2,\A}(M\times [0,T])$ to the Yamabe flow \eqref{eq:YamabeFlow} 
exists for a given $T>0$  and is uniformly bounded away from zero, then in fact 
$u \in C^{3,\A}(M\times [0,T])$. In particular we obtain
$$
S \in C^{1,\A}(M\times [0,T]) \subseteq C^0([0,T],H^1(M)) \cap C^1((0,T),H^1(M)).
$$
\end{enumerate}
\end{Thm}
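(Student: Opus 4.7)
I would reformulate the flow \eqref{eq:YamabeFlow} as a quasilinear parabolic equation for $u$ alone. Dividing by $\frac{n+2}{n-2} u^{4/(n-2)}$ yields
\begin{equation*}
\bigl(\partial_t - a(u)\Delta_0\bigr) u = F(u,\rho), \qquad u(0) = 1,
\end{equation*}
with $a(u) := (n-1) u^{-4/(n-2)}$ and $F(u,\rho) := \tfrac{n-2}{4}\rho\, u - \tfrac{n-2}{4} S_0\, u^{(n-6)/(n-2)}$, where $\rho=\rho(u)$ is the nonlocal functional \eqref{eq:rho}. Writing $u = 1+v$ turns this into a semilinear problem $(\partial_t - a(1+v)\Delta_0)v = N(v)$, $v(0)=0$, since $\Delta_0 (1)=0$; here $N(v):= F(1+v,\rho(1+v))$ is built from $S_0$, powers of $1+v$, and the scalar functional $\rho(1+v)$, all of which stay inside the Banach scale $\{C^{k,\A}\}$ by the algebraic axioms (1a)--(1d).

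For part (1), I would set up a Banach fixed point argument in a closed ball $B_R$ inside $\{w \in C^{2,\A}(M\times [0,T]): w(0)=0\}$. For $w \in B_R$ and $R$ small enough, $a(1+w) \in C^{2,\A} \subset C^{1,\A}$ is positive and uniformly bounded below, so the solution operator $Q_{w}$ of \eqref{Q} is available, and I define
\begin{equation*}
\Psi(w) := Q_{w}\bigl(N(w)\bigr),
\end{equation*}
which lies in $C^{2,\A}(M\times[0,T])$ and vanishes at $t=0$ by the mapping property $Q_{w} : C^{0,\A} \to C^{2,\A}$. Self-mapping and contractivity on a short interval $[0,T]$ then follow from the standard parabolic dichotomy: $N$ is Lipschitz on $B_R$ in the $C^{0,\A}$-norm, while the vanishing of $w - w'$ at $t=0$ produces a small factor $T^\beta$ for some $\beta > 0$, and the difference of solution operators $(Q_w - Q_{w'})N(w')$ is controlled by writing it as $Q_w\bigl((a(1+w)-a(1+w'))\Delta_0 \Psi(w')\bigr)$. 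The unique fixed point $v_* \in B_R$ yields the desired solution $u = 1+v_* \in C^{2,\A}(M\times[0,T])$. Positivity and a uniform lower bound on $[0,T]$ are immediate from continuity and $u(0)\equiv 1$ provided $T$ is chosen small enough.

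For part (2), assume $u \in C^{2,\A}(M\times[0,T])$ solves \eqref{eq:YamabeFlow} with $u^{-1}$ bounded. Then $a(u) \in C^{2,\A}$ by the algebraic axioms (1b)--(1d), and $F(u,\rho) \in C^{1,\A}(M\times[0,T])$: indeed $u \in C^{2,\A} \subset C^{1,\A}$, $S_0 \in C^{1,\A}$, and the purely time-dependent function $\rho(t)$ inherits H\"older regularity from $u$ via \eqref{eq:rho} together with \eqref{LH} and the $L^2$-valued pointwise-in-time control built into (2a). The improved mapping $Q : C^{1,\A} \to C^{3,\A}$ from \eqref{Q}, available precisely because $a(u) \in C^{2,\A}$, combined with $R : C^{3,\A}(M) \to C^{3,\A}(M\times[0,T])$ from \eqref{R} applied to the datum $1$, then upgrades the representation $u = R(1) + Q(F(u,\rho))$ to $u \in C^{3,\A}(M\times[0,T])$. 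Finally $S = u^{-(n+2)/(n-2)}\bigl(S_0 u - \tfrac{4(n-1)}{n-2}\Delta_0 u\bigr)$ lies in $C^{1,\A}(M\times[0,T])$ using \eqref{DT} for $\Delta_0 : C^{3,\A}\to C^{1,\A}$ and the algebraic axioms, and the inclusions \eqref{LH} give the desired $H^1$-continuity in time.

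\textbf{Main obstacle.} The principal difficulty is the contraction step in part (1). Two features need care: the nonlocal coupling through $\rho(u) = \int_M S_0 u^2 + \tfrac{4(n-1)}{n-2}|\nabla u|^2 \, d\mu$, and the composition $w \mapsto (1+w)^{-4/(n-2)}$ with a non-integer exponent. The first is handled using the $L^2$-continuity and the pointwise-in-space H\"older control in \eqref{LH} to make sense of $\rho(t)$ and its Lipschitz dependence on $u$; the second via a Taylor expansion about $v=0$, combined with the algebraic axioms (1a)--(1d), so that on a ball around $1$ in $C^{2,\A}$ the power is a convergent series in $v = u-1$. The bookkeeping of Lipschitz constants required to extract the short-time factor $T^\beta$ for contractivity is the most technical piece, but it proceeds along the lines laid out in \cite[Theorems 1.7, 4.1]{ShortTime} and \cite[Proposition 4.8]{LongTime}.
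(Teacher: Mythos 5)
Your architecture is close to the paper's, but there are two concrete gaps, both traceable to invoking tools that the axioms of Definition \ref{Schauder-def} do not actually supply. In part (1) you run the fixed point through the variable-coefficient solution operator, defining $\Psi(w)=Q_w(N(w))$. Axiom \eqref{Q} only asserts, for each fixed admissible coefficient $a$, the existence of \emph{some} bounded operator $Q$; it provides neither a bound on the operator norm of $Q_w$ that is uniform over $w$ in a ball of $C^{2,\A}$ (which your contraction needs), nor any short-time gain. Likewise the ``small factor $T^\beta$ from the vanishing of $w-w'$ at $t=0$'' is not available: the $C^{k,\A}$ are abstract Banach spaces here, and the only quantified time gain in the axioms is $e^{t\Delta_0}:C^{k,\A}\to t^\A C^{k+1,\A}$ in \eqref{mapping-heat}, a property of the constant-coefficient heat operator only. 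The paper avoids both issues by freezing the coefficient at $v=0$: writing $u=1+v$ one obtains \eqref{lin} with the fixed operator $\partial_t-(n-1)\Delta_0$ and a remainder $\Phi$ that is \emph{quadratic} in $v$, and runs the contraction \eqref{FP} with $e^{t(n-1)\Delta_0}$. Contractivity then comes from the quadratic estimates on $\Phi$ (derived from \eqref{quadratic}), whose Lipschitz constant on a ball of radius $R$ is $\leq 2CR$ and hence small for small $R$; smallness of $T$ is needed only to control $e^{t(n-1)\Delta_0}S_0$ via the $t^\A$ gain so that the ball is preserved. To keep your quasilinear formulation you would have to strengthen the axioms, which the paper deliberately does not do.

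In part (2) you assert the representation $u=R(1)+Q(F(u,\rho))$ and read off $C^{3,\A}$ regularity. But $Q$ and $R$ merely produce \emph{some} solution $u'\in C^{3,\A}$ of the linear problem with the right-hand side frozen; identifying $u'$ with the given $C^{2,\A}$ solution $u$ requires a uniqueness argument, which you omit. The paper supplies it via the weak maximum principle \eqref{weak-max}: $w=u-u'$ solves $(\partial_t-a\Delta_0)w=0$ with $w(0)=0$, hence $w\equiv 0$. Without this step the regularity upgrade is unjustified. The remaining ingredients of your part (2) --- $a(u)\in C^{2,\A}$ by the algebraic axioms, $\rho\in C^{2,\A}([0,T])$ via \eqref{LH} and \eqref{CT}, and the improved mappings $Q:C^{1,\A}\to C^{3,\A}$ and $R:C^{3,\A}(M)\to C^{3,\A}(M\times[0,T])$ --- do match the paper's argument.
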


\begin{proof} We shall only provide a brief proof outline.
The first statement is proved by setting up a fixed point argument in 
the Banach space $C^{2,\A}(M\times [0,T])$. If $u= 1+v \in C^{2,\A}(M\times [0,T])$
is a solution to \eqref{eq:YamabeFlow}, then $v$ satisfies the equation
\begin{equation}\label{lin}
\partial_t v -(n-1) \Delta_0 v = - \frac{n-2}{4} S_0 + \Phi (v),
\end{equation}
where $\Phi: C^{2,\A}(M\times [0,T]) \to C^{0,\A}(M\times [0,T])$ is a bounded map,
in view of the algebraic and regularity properties \eqref{DT} in Definition \ref{Schauder-def}.
Moreover, $\Phi$ quadratic in its argument, i.e. writing $\| \cdot \|_{k,\A}$ for the norm on $C^{k,\A}$
for any $k \in \N$, there exists a uniform $C>0$, such that by \eqref{quadratic}
(cf. \cite[Lemma 5.1]{ShortTime})
\begin{equation} \begin{split}
\forall w,w' \in C^{2,\A}: \quad  &\| \Phi(w) \|_{0,\A} \leq C \| w \|^2_{2,\A}, \\
&\| \Phi(w) -\Phi(w') \|_{0,\A} \leq C \left( \| w \|_{2,\A} + \| w' \|_{2,\A} \right) \| w-w' \|_{2,\A}.
\end{split} \end{equation}
Now a solution $v$ of \eqref{lin} (and hence also a solution $u=1+v$ of \eqref{eq:YamabeFlow}) is obtained
as a fixed point of the map 
\begin{equation}\label{FP}
C^{2,\A}(M\times [0,T]) \ni v \mapsto e^{t(n-1)\Delta_0} \left( - \frac{n-2}{4} S_0 + \Phi (v) \right)
\in C^{2,\A}(M\times [0,T]), 
\end{equation}
which is a contraction mapping on a subset of $C^{2,\A}(M\times [0,T])$ for $T>0$ sufficiently small\footnote{We need
to assume that $T>0$ is sufficiently small in order to control $e^{t(n-1)\Delta_0} (S_0)$.}, by 
\eqref{mapping-heat} in Definition \ref{Schauder-def}. One argues exactly as in 
\cite[Theorem 4.1]{ShortTime}. Note that the regularity of the scalar curvature $S$ along
the flow is then $S \in C^{0,\A}(M\times [0,T])$. \medskip

Note also, that the fixed point argument
is performed in a small ball around zero in $C^{2,\A}(M\times [0,T])$, and thus for $T>0$ sufficiently
small, the norm of $v$ is small. Hence $u=1+v$ is positive and bounded away from zero.
\medskip

The second statement improves regularity of $S$. By the regularity properties 
\eqref{LH} in Definition \ref{Schauder-def}, we conclude that $\rho, \partial_t \rho \in C^{0,\A}([0,T])$.
By \eqref{CT}, this implies that $\rho \in C^{2,\A}([0,T])$. We can now apply the mapping properties
\eqref{Q} and \eqref{R}\footnote{Here we use the assumption that $u$ is uniformly bounded away from zero, and 
that $1 \in C^{3,\A}$ by the algebraic properties of the Banach spaces.} 
in Definition \ref{Schauder-def} to obtain a solution $u' \in C^{3,\A}(M\times [0,T])$ to
\begin{equation}
\partial_t u' - (n-1) u^{-\frac{4}{n-2}} \Delta_0 u' = \frac{n-2}{4} \left( \rho u - S_0 u^{\frac{n-6}{n-2}} \right),
\quad u'(0) = 1. 
\end{equation}
The given solution $u \in C^{2,\A}$ satisfies the same equation, 
and we can prove by the weak maximum property \eqref{weak-max} of elements in $C^{2,\A}$, 
that $u\equiv u'$. Thus, indeed $u \in C^{3,\A}$ and hence $S \in C^{1,\A}$.
This is basically the argument also used in \cite[Proposition 4.8]{LongTime}.
\end{proof}

\begin{Rem}\label{SC2}
If we assume $Q : C^{2,\A} \to C^{4,\A}$ and $R : C^{4,\A}(M) \to C^{4,\A}$ in Definition 
\ref{Schauder-def}, as has been proved in \cite{LongTime}, then the condition $S_0 \in C^{2,\A}(M)$
implies by similar arguments as in Theorem \ref{short}, that any solution $u\in C^{2,\A}$
is actually in $C^{4,\A}$. This would lead to $S \in C^{2,\A}$, in particular the scalar curvature
would stay bounded along the flow. Here, we decided to require less in Definition 
\ref{Schauder-def}, assume less regularity for $S_0$, and conclude
boundedness of $S$ by Moser iteration methods instead.
\end{Rem}

%%%%%%%%%%%%%%%%%%%%%%%%%%%%%%%%%
\subsection{Regularity of the initial scalar curvature}
%%%%%%%%%%%%%%%%%%%%%%%%%%%%%%%%%

In view of Theorem \ref{short} we arrive at our final assumption on a
regularity of the initial scalar curvature $S_0$ with respect to the scale of 
Banach spaces in Definition \ref{Schauder-def}.

\begin{Assump}\label{S-bound}
Assuming that $(M,g_0)$ satisfies parabolic Schauder estimates, we also ask the
initial scalar curvature $S_0 \in C^{1,\A}(M)$. This implies in view of Theorem \ref{short}
$$
S \in C^0([0,T],H^1(M)) \cap C^1((0,T),H^1(M)).
$$
In particular, since the flow $u \in C^{2,\A}(M \times [0,T])$ is bounded from above and below
for $T>0$ sufficiently small, norms on the Sobolev space $H^1(M)$ with 
respect to $g_0$ and on the Sobolev space $H^1(M,g)$ with respect to 
$g=u^{\frac{4}{n-2}}g_0$, are equivalent. Thus $S$ lies in the Sobolev space $H^1(M,g)$
$$
S \in C^0([0,T],H^1(M,g)) \cap C^1((0,T),H^1(M,g)).
$$
\end{Assump} 

Our arguments below will use regularity of $S$ to show that given 
$S_0 \in L^q(M)$ for $q=\frac{n^2}{2(n-2)}=\frac{n}{2} +\frac{n}{n-2} > n/2$,
we may conclude by Moser iteration that $S \in L^\infty(M)$ for positive times. We close
this subsection with an observation, that on stratified spaces, 
$S_0 \in L^q(M)$ for $q > n/2$ and $S_0 \in L^\infty(M)$ basically carry the same geometric restriction. 
Indeed, consider a cone $(0,1) \times N$ over a Riemannian manifold $(N,g_0)$,
with metric $g_0 = dx^2 \oplus x^2 g_N + h$, where $h$ is smooth in $x\in [0,1]$ and 
$|h|_{\overline{g}} = O(x)$ as $x\to 0$, where we write $\overline{g}:= dx^2 \oplus x^2 g_N$. Then 

\begin{equation}
S_0 \sim \frac{\textup{scal}(g_N) - \dim N (\dim N -1)}{x^{2}} + O(x^{-1}), \quad \textup{as} \ x \to 0,
\end{equation}
where the higher order term $O(x^{-1})$ comes from the perturbation $h$. 
Both assumptions $S_0 \in L^\infty(M)$ and $S_0 \in L^q(M)$ for $q> n/2$
imply that the leading term of the metric $g_0$ is scalar-flat, i.e. $\textup{scal}(g_N) = \dim N (\dim N -1)$.

\subsection{The overarching strategy}
Studies of the Yamabe flow usually follow the following very rough pattern. 
One first argues that \eqref{eq:YamabeFlow} has a short-time solution. This is the step we have 
been concerned with in this section. This step doesn't invoke the sign 
of the Yamabe constant. \medskip

The next step is to show that the flow can be extended to all times. The way one does 
this is to assume the flow is defined for $t\in (0,T)$ for some maximal time $T<\infty$ and then 
derive a priori bounds on the solution $u$ and the scalar curvature $S$, showing 
that neither of them develop singularities as $t\to T$. One can thus keep flowing past 
$T$, establishing long-time existence. This is the step we are concerned with for the rest of the paper.

%%%%%%%%%%%%%%%%%%%%%%%%%%%%%%%%%
\section{The evolution of the scalar curvature and lower bounds}\label{lower-section}
%%%%%%%%%%%%%%%%%%%%%%%%%%%%%%%%%

In this section we derive a lower bound on the scalar curvature $S$ along the normalized 
Yamabe flow. We present an argument that does not require the maximum principle, but rather
the following assumptions

\begin{equation}\label{S-reg-ass}
\begin{split}
&S \in C^0([0,T],H^1(M,g)) \cap C^1((0,T),H^1(M,g)), \\
&C^\infty_c(M) \ \textup{is dense in } H^1(M), \\
&H^1(M) = H^1(M,g), \\
&Y(M,g_0) > 0.
\end{split}
\end{equation}

\noindent These properties follow from Assumptions \ref{Y-assump}, 
\ref{admissible-assump}, \ref{Schauder-assump} and \ref{S-bound}.

\begin{Lem}
Let $g= u^{\frac{4}{n-2}}g_0$ 
be a family of metrics evolving according to the normalized Yamabe flow 
\eqref{eq:YamabeFlow} satisfying\footnote{In fact here we do not require
$S\in C^0([0,T],H^1(M,g))$.} \eqref{S-reg-ass}. Then
$S$ evolves according to
\begin{equation}
\partial_t S-(n-1)\Delta S= S(S-\rho).
\label{eq:ScalarEvol}
\end{equation}
where $\Delta$ denotes the Laplacian with respect to the time-evolving metric $g$.
We write $S_{+}:= \max\{S,0\}$ and $S_{-}:= -\min\{S,0\}$. 
Then $S_{\pm}\in C^1((0,T), H^1(M,g))$ and satisfy
\begin{align}
&\partial_t S_{+}-(n-1)\Delta S_{+} \leq S_+(S_+-\rho),
\label{eq:S+Evol} \\
&\partial_t S_- -(n-1)\Delta S_- \leq -S_-(S_-+\rho).
\label{eq:S-Evol}
\end{align} 
\end{Lem}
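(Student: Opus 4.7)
The strategy is to establish \eqref{eq:ScalarEvol} by a direct computation from the scalar equation for $u$, and then to derive the inequalities for $S_\pm$ via a convex smooth approximation of the positive part combined with a Kato-type manipulation, avoiding the weak maximum principle as announced.

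\emph{Step 1 (Evolution of $S$).}
Equation \eqref{eq:YamabeFlow} is equivalent to $\partial_t u = \tfrac{n-2}{4}(\rho-S)u$. Differentiating the identity $S = u^{-(n+2)/(n-2)} L_0 u$ in $t$ and inserting this expression for $\partial_t u$ yields
\[
\partial_t S = -\tfrac{n+2}{4}(\rho-S)S + \tfrac{n-2}{4}\,u^{-(n+2)/(n-2)}\,L_0\bigl[(\rho-S)u\bigr].
\]
Since $\rho$ is spatially constant, expanding $L_0[(\rho-S)u]$ via the product rule for $\Delta_0$ and reassembling using the conformal transformation formula $\Delta_g f = u^{-4/(n-2)}(\Delta_0 f + 2u^{-1}\langle\nabla u,\nabla f\rangle_{g_0})$ identifies the result as $(\rho-S)\,L_0 u + \tfrac{4(n-1)}{n-2}\,u^{(n+2)/(n-2)}\Delta_g S$. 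Substituting back and cancelling the two $(\rho-S)S$ contributions gives \eqref{eq:ScalarEvol}.

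\emph{Step 2 (Differential inequalities for $S_\pm$).}
Let $\phi_\epsilon\in C^2(\R)$ be convex approximations to $x\mapsto x_+$ with $0\leq \phi_\epsilon'\leq 1$ and $\phi_\epsilon''\geq 0$, for example $\phi_\epsilon(x) = \tfrac12(\sqrt{x^2+\epsilon^2}+x)-\tfrac{\epsilon}{2}$, so that $\phi_\epsilon(x)\to x_+$ and $\phi_\epsilon'(x)\to \chi_{\{x>0\}}$ pointwise as $\epsilon\to 0$. By \eqref{S-reg-ass} and the smoothness and boundedness of $\phi_\epsilon, \phi_\epsilon'$, the composition $\phi_\epsilon(S)$ lies in $C^1((0,T),H^1(M,g))$ with $\partial_t\phi_\epsilon(S) = \phi_\epsilon'(S)\,\partial_t S$ and $\nabla\phi_\epsilon(S) = \phi_\epsilon'(S)\,\nabla S$. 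Testing the weak form of \eqref{eq:ScalarEvol} against $\phi_\epsilon'(S)\,\psi$ for non-negative $\psi\in C^\infty_c(M)$ and distributing $\nabla$ via the product rule yields
\[
\int_M\partial_t\phi_\epsilon(S)\,\psi\, d\Vol_g + (n-1)\!\int_M\!\bigl[\phi_\epsilon''(S)|\nabla S|_g^2\,\psi + \nabla\phi_\epsilon(S)\!\cdot\!\nabla\psi\bigr] d\Vol_g = \int_M\phi_\epsilon'(S)\,S(S-\rho)\,\psi\, d\Vol_g.
\]
Dropping the non-negative term $(n-1)\phi_\epsilon''(S)|\nabla S|_g^2\psi$ from the left side produces the weak inequality $\partial_t\phi_\epsilon(S) - (n-1)\Delta_g\phi_\epsilon(S)\leq \phi_\epsilon'(S)\,S(S-\rho)$. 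Passing $\epsilon\to 0$ by dominated convergence ($|\phi_\epsilon(S)|\leq |S|$, $|\phi_\epsilon'(S)|\leq 1$) yields \eqref{eq:S+Evol}; repeating the argument with $-S$ in place of $S$ produces \eqref{eq:S-Evol} after noting $\chi_{\{S<0\}}(-S(S-\rho)) = -S_-(S_-+\rho)$. Stampacchia's theorem together with the $H^1$-continuity of $t\mapsto S(t)$ then provides the claimed $C^1((0,T),H^1(M,g))$ regularity of $S_\pm$.

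\emph{Main obstacle.} The principal technical point is justifying the chain rule and the integration by parts on a possibly singular manifold: both depend crucially on the density of $C^\infty_c(M)$ in $H^1(M) = H^1(M,g)$ from \eqref{S-reg-ass}, which permits the use of compactly supported test functions without incurring boundary contributions at the singular strata. Once this is secured, the passage $\epsilon\to 0$ reduces to an elementary dominated convergence argument.
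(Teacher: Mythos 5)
Your proof is correct and follows essentially the same route as the paper: the evolution equation is obtained by differentiating $S=u^{-\frac{n+2}{n-2}}L_0(u)$ in time and using the conformal covariance of $L_0$, and the inequalities for $S_\pm$ are obtained by testing the weak equation against the derivative of a convex $C^1$ regularization of $x\mapsto x_+$ and discarding the non-negative $\phi_\epsilon''|\nabla S|^2$ term before letting $\epsilon\to 0$. The only cosmetic differences are your choice of smoothing function and that you treat $S_-$ by applying the argument to $-S$, whereas the paper writes $S_-=S_+-S$ and subtracts the two relations; both are equivalent.
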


\begin{Rem} The equation \eqref{eq:ScalarEvol}
is to be understood in the weak sense: 
for any compactly supported 
smooth test function $\phi \in C^\infty_c(M)$ we have
\begin{equation*}
\int_M \partial_t S \cdot \phi \, d\Vol_g + (n-1) \int_M ( \nabla S, \nabla \phi )_g 
d\Vol_g = \int_M S(S-\rho)  \cdot \phi \,  d\Vol_g.
\end{equation*}
Similarly for the partial differential inequalities \eqref{eq:S+Evol} and \eqref{eq:S-Evol}
and $\phi \geq 0$
\begin{equation*}
\int\limits_M \partial_t S_\pm  \cdot \phi \,  d\Vol_g + (n-1) \int\limits_M ( \nabla S_\pm, \nabla \phi )_g 
d\Vol_g \leq \pm \int\limits_M S_\pm(S_\pm \mp \rho)  \cdot \phi \,  d\Vol_g.
\end{equation*}
By \eqref{S-reg-ass}, $C^\infty_c(M)$
is dense in $H^1(M) = H^1(M,g)$.
Hence we can as well assume $\phi \in H^1(M,g)$ in the weak formulation above.
\end{Rem}

\begin{proof}
Equation \eqref{eq:ScalarEvol} is well-known, and can be deduced as follows. 
Write
$$
L_g := S-4 \, \frac{n-1}{n-2} \, \Delta,
$$ 
for the conformal Laplacian of the metric $g$. We write $L_0 \equiv L_{g_0}$.
When $g$ and $g_0$ are related by $g=u^{\frac{4}{n-2}} g_0$, then $L_g$ and $L_0$ are related by
\[
L_g(\cdot)=u^{-\frac{n+2}{n-2}} L_0(u \, \cdot ).
\]
In particular, $S=L_g(1)=u^{-\frac{n+2}{n-2}}L_0(u)$. Differentiate this equation weakly 
in time and use \eqref{eq:YamabeFlow} to replace $\partial_t u=-\frac{n-2}{4} (S-\rho)u$ to get
\[
\partial_t S=\frac{n+2}{4}(S-\rho)u^{-\frac{n+2}{n-2}}L_0(u)-\frac{n-2}{4}u^{-\frac{n+2}{n-2}}
L_0\left((S-\rho)u\right).
\]
Applying the transformation rule for $L$ we may rewrite this as
\begin{align*}
\partial_t S &=\frac{n+2}{4}(S-\rho)L_g(1)-\frac{n-2}{4}L_g(S-\rho)
\\ &=\frac{n+2}{4}(S-\rho)S+(n-1)\Delta S-\frac{n-2}{4}(S-\rho)S.
\end{align*}
This proves the formula \eqref{eq:ScalarEvol}. In order to derive 
the differential inequality for $S_+$, consider any $\varepsilon>0$ and define 
\[
\psi_{\varepsilon}(x):= \begin{cases} \sqrt{x^2+\varepsilon^2}-\varepsilon, & x\geq 0 
\\ 0, & x<0.\end{cases}
\]
For $v\in H^1(M,g)$ it is readily checked that $\psi_{\varepsilon}(v)\in H^1(M,g)$ and 
$\lim\limits_{\varepsilon\to 0} \psi_{\varepsilon}(v)=v_+$. Furthermore, we compute for the 
derivatives in case  $x>0$
$$
\psi_{\varepsilon}'(x)=\frac{x}{\sqrt{x^2+\varepsilon^2}},
\quad \psi''_{\varepsilon}(x)=\frac{\varepsilon^2}{(x^2+\varepsilon^2)^{\frac{3}{2}}}.
$$ 
These are both bounded for a fixed $\varepsilon > 0$, so the chain rule applies. 
Next up, we claim for any $v\in H^1(M,g)$ in the weak sense
\begin{equation}\label{psiv}
\Delta \psi_{\varepsilon}(v)\geq \frac{v}{\sqrt{v^2+\varepsilon^2}} \, \Delta v
\equiv \psi_{\varepsilon}'(v) \, \Delta v.
\end{equation}
This is seen as follows. Let $0\leq \xi\in C^{\infty}_c(M)$ be arbitrary and compute
\begin{align*}
\int_M \xi \,  \Delta \psi_{\varepsilon}(v)\, d\Vol_g&:=
 -\int_M ( \nabla \xi, \nabla \psi_{\varepsilon}(v) )_g \, d\Vol_g
 = -\int_M \frac{v}{\sqrt{v^2+\varepsilon^2}} ( \nabla \xi, \nabla v)_g \, d\Vol_g \\ 
 &= -\int_M \left( \nabla v, \nabla \left(\frac{v}{\sqrt{v^2+\varepsilon^2}} \, \xi\right)\right)_g\, d\Vol_g
 +\int_M \frac{\xi \varepsilon^2 \vert \nabla v\vert_g^2}{(v^2+\varepsilon^2)^{\frac{3}{2}}}\, d\Vol_g\\
&\geq -\int_M \left( \nabla v, \nabla \left(\frac{v}{\sqrt{v^2+\varepsilon^2}} \, \xi\right)\right)_g \, d\Vol_g   
\\ &=: \int_M \xi \frac{v}{\sqrt{v^2+\varepsilon^2}} \Delta v\, d\Vol_g.
\end{align*} 
This proves \eqref{psiv}, which allows us to deduce
\begin{align*}
\partial_t \psi_{\varepsilon}(S)-(n-1)\Delta \psi_{\varepsilon}(S)
\leq \, &\begin{cases} \psi_{\varepsilon}'(S) (\partial_t S-(n-1)\Delta S), & S\geq 0\\ 0, & S <0\end{cases}\\ \stackrel{\eqref{eq:ScalarEvol}}{=} \, &\begin{cases} \psi_{\varepsilon}'(S) S(S-\rho), & 
S\geq 0\\ 0, & S<0\end{cases} \\ = \ \,  & \frac{S}{\sqrt{S^2+\varepsilon^2}} S_+(S_+-\rho).
\end{align*}
Letting $\varepsilon\to 0$ results in \eqref{eq:S+Evol}.
To prove \eqref{eq:S-Evol}, observe that $S_- =S_+ -S$. Hence
\begin{align*}
\partial_t S_- -(n-1)\Delta S_- &=\partial_t S_+ -(n-1)\Delta S_+ -\left(\partial_t S -(n-1)\Delta S\right) \\ & 
\leq S_+(S-\rho)-S(S-\rho)=S_-(S-\rho),
\end{align*}
where we used \eqref{eq:ScalarEvol} and \eqref{eq:S+Evol} in the inequality step.
The only thing which remains to be observed is that $S_-  \cdot S=S_- (S_+ - S_-)=-S_- ^2$.  
\end{proof}

We can now derive lower bounds for $S$ by studying the evolution (in-) equalities above.
This is usually done by invoking the weak maximum principle for $S$, which  
is not available under the current assumptions \eqref{S-reg-ass}. Thus, we provide an alternative
novel argument, which does not use a maximum principle and which we could not find elsewhere in the 
literature.

\begin{Prop}
\label{Prop:LowerCurvBound2}
Let $g= u^{\frac{4}{n-2}}g_0$ 
be a family of metrics evolving according to the normalized Yamabe flow 
\eqref{eq:YamabeFlow} satisfying \eqref{S-reg-ass}. Then
\[
\norm{S_-}_{L^p(M,g)}( t) \leq e^{tn\frac{\rho(0)}{2p}} \norm{(S_0)_-} _{L^p(M)}
\]
holds for all $2\leq p\leq \infty$. In particular, if $(S_0)_- \in L^\infty(M)$, then 
$S_- \in L^{\infty}$ on $[0,T]$ with uniform bounds depending only on $T$ and $S_0$. 
Moreover, if $S_0\geq 0$, then $S\geq 0$ along the normalized Yamabe flow for all time.
\end{Prop}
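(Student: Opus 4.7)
The plan is to obtain a differential inequality for $F_p(t) := \int_M S_-^p \, d\Vol_g$ by testing the weak inequality \eqref{eq:S-Evol} with $\phi = S_-^{p-1}$ and then applying Gronwall. First I would differentiate $F_p$ in time via the Leibniz rule, using the volume evolution $\partial_t d\Vol_g = -\frac{n}{2}(S-\rho)\, d\Vol_g$ from \eqref{eq:DVol}, and substitute the weak inequality for the resulting $p\int_M S_-^{p-1}\partial_t S_- \, d\Vol_g$. The pointwise identity $S\cdot S_- = -S_-^2$, which follows from $S = S_+ - S_-$ together with $S_+ S_- = 0$, absorbs the volume-deformation contribution and yields, after a short computation,
\begin{equation*}
F_p'(t) + p(n-1)(p-1)\int_M S_-^{p-2}\abs{\nabla S_-}_g^2 \, d\Vol_g \leq \left(\frac{n}{2}-p\right)\int_M S_-^p(S_- + \rho)\, d\Vol_g.
\end{equation*}

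In the regime $p \geq n/2$ the right-hand side is non-positive (as $S_-, \rho \geq 0$) and the gradient term on the left is non-negative, so $F_p$ is non-increasing, giving $\norm{S_-(t)}_{L^p(M,g)} \leq \norm{(S_0)_-}_{L^p(M)}$; this is in fact stronger than the claimed bound on that range, since $e^{tn\rho(0)/(2p)} \geq 1$. I would then pass to the limit $p \to \infty$: since $\Vol_g(M) = \Vol_{g_0}(M) = 1$ along the flow, the $L^p$ norms converge monotonically to the $L^\infty$ norm and produce the $p = \infty$ case $\norm{S_-(t)}_{L^\infty} \leq \norm{(S_0)_-}_{L^\infty}$. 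For the remaining range $2 \leq p < n/2$ (only non-empty when $n \geq 5$), the right-hand side above is positive, but I would reinsert the just-obtained $L^\infty$ bound together with the monotonicity $\rho \leq \rho(0)$ from \eqref{eq:rhoEvol} to estimate $S_- + \rho \leq \norm{(S_0)_-}_{L^\infty} + \rho(0)$, which produces a Gronwall inequality $F_p'(t) \leq (\tfrac{n}{2}-p)(\norm{(S_0)_-}_{L^\infty}+\rho(0))\, F_p(t)$ and closes the exponential bound.

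The hard part will be the justification of $S_-^{p-1}$ as a bona fide test function in $H^1(M,g)$, since a priori $S \in H^1(M,g)$ need not be in $L^\infty(M,g)$; this is handled by first working with the truncated power $\min(S_-, N)^{p-1}$, deriving the inequality at level $N$ (where the chain rule is unproblematic), and then passing $N \to \infty$ by monotone convergence. The chain-rule identity needed to identify $\nabla S_-$ is already encoded in the $\psi_\varepsilon$-regularization used in the preceding lemma, and can be invoked verbatim. The last two assertions of the proposition are then immediate from the $L^\infty$ case: the uniform bound $\norm{S_-(t)}_{L^\infty} \leq \norm{(S_0)_-}_{L^\infty}$ holds for all $t \in [0,T]$ and depends only on $T$ and $S_0$; and if $S_0 \geq 0$ then $(S_0)_- \equiv 0$ forces $\norm{S_-(t)}_{L^p} \equiv 0$ for every $p$, hence $S \geq 0$ is preserved under the flow.
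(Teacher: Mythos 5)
Your formal computation is correct and the Gronwall strategy is the right one, but the step you defer as "the hard part" — justifying the test function by truncation and passing to the limit — is precisely where the argument breaks, and it is where the paper's proof does something you have omitted. Take your truncated test function $\xi=\min(S_-,N)^{p-1}$ and let $\Psi_N$ be the antiderivative with $\Psi_N'=\min(\cdot,N)^{p-1}$, so that the time term becomes $\partial_t\int_M\Psi_N(S_-)\,d\Vol_g$ after using \eqref{eq:DVol}. The sign you need is that of $\tfrac{n}{2}\Psi_N(x)-x\min(x,N)^{p-1}$ multiplied by $(S_-+\rho)$. For $x\leq N$ this is $(\tfrac{n}{2p}-1)x^p\leq 0$ when $p\geq n/2$, as you say; but for $x>N$ it equals $(\tfrac{n}{2}-1)N^{p-1}x+\tfrac{n}{2}N^p(\tfrac1p-1)$, which is \emph{positive} and grows linearly. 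Multiplied by $(S_-+\rho)\sim S_-$, the region $\{S_->N\}$ contributes a term of size $N^{p-1}\int_{\{S_->N\}}S_-^2\,d\Vol_g$ with the wrong sign. Monotone convergence does not dispose of it: with only $S_-\in H^1(M,g)\subset L^{2n/(n-2)}$ available, one cannot show this error tends to zero as $N\to\infty$ (that would require roughly $S_-\in L^{p+1}$, which is what you are trying to prove). So the truncated differential inequality is not $F_p'\leq 0$, and the claimed monotonicity of $F_p$ for $p\geq n/2$ — on which your entire $L^\infty$ bound and hence the $2\leq p<n/2$ case rest — is not established.

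The paper's proof closes exactly this hole by \emph{not} discarding the Dirichlet term. It inserts $\phi_{\beta,L}(S_-)$ into the Sobolev-type inequality \eqref{Y-Sobolev} furnished by $Y(M,g_0)>0$, obtaining $(n-1)\norm{\nabla\phi_{\beta,L}(S_-)}^2_{L^2(M,g)}\geq\tfrac{n-2}{4}\int_M\phi_{\beta,L}(S_-)^2S_-\,d\Vol_g$. The resulting extra term grows cubically on $\{S_->L\}$ and dominates the bad quadratic contribution there, so that the full pointwise combination $\tfrac{n}{2}H_{\beta,L}(x)-xG_{\beta,L}(x)-\tfrac{n-2}{4}\phi_{\beta,L}(x)^2$ is non-positive for \emph{all} $x\geq0$ and all $\beta\geq1$ — no case split on $p$ versus $n/2$ is needed, and the limit $L\to\infty$ goes through by Fatou and dominated convergence. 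Note that your proposal never uses the hypothesis $Y(M,g_0)>0$ from \eqref{S-reg-ass}; that is the signal that an ingredient is missing. A secondary, smaller point: even granting the formal inequality, your Gronwall constant for $2\leq p<n/2$ is $e^{t(\frac{n}{2}-p)(\norm{(S_0)_-}_{L^\infty}+\rho(0))/p}$, which is not the stated $e^{tn\rho(0)/(2p)}$ and can be larger, so on that range you would prove a weaker estimate than the one asserted.
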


\begin{proof}
The weak formulation of \eqref{eq:S-Evol} is that for any $0\leq \xi \in H^1(M,g)$ 
\begin{equation}
\int_M \xi \, \partial_t S_- \, d\Vol_g  + (n-1)\int_M ( \nabla S_-, \nabla \xi )_g\, d\Vol_g \leq 
-\int \xi \, S_-(S_-+\rho)\, d\Vol_g
\label{eq:WeakS-Evol}
\end{equation}
holds. A problem when manipulating this is of course that the chain rule fails 
to hold in general, so we use the same workaround as \cite[pp. 10-13]{ACM} 
(who in turn are following \cite[pp. 349-352]{Gursky}). Let $L>0$,  $\beta\geq 1$ and define 
\begin{equation}
\phi_{\beta,L}(x):= 
\begin{dcases} x^\beta, & x\leq L,\\ \beta L^{\beta-1}(x-L) + L^\beta, & x> L. \end{dcases}
\end{equation}
\begin{equation}\label{G-def}
G_{\beta,L}(x):= \int_0^x \phi_{\beta,L}'(y)^2\, dy=
\begin{dcases} \frac{\beta^2}{2\beta-1} x^{2\beta-1}, & x\leq L, 
\\ \beta^2 L^{2(\beta-1)} x -\frac{2\beta^2 L^{2\beta-1}(\beta-1)}{2\beta-1}, & x>L. 
\end{dcases} 
\end{equation}
Finally, we define $H_{\beta,L}(x):= \int\limits_0^x G_{\beta,L}(y)\, dy$ and conclude
\begin{equation*}
H_{\beta,L}(x) = \left\{ \begin{split}
&\frac{\beta x^{2\beta}}{2(2\beta-1)},  & x\leq L, \\
&\frac{\beta^2 L^{2(\beta-1)}}{2}(x^2-L^2)-\frac{2\beta^2 
L^{2\beta-1}(\beta-1)}{2\beta-1}(x-L)+\frac{\beta L^{2\beta}}{2(2\beta-1)},  & x>L. 
\end{split}\right.
\end{equation*}
The crucial features of these definitions are as follows
$$
\phi_{\beta,L}(x)\xrightarrow{L\to \infty} x^{\beta}, \quad 
G_{\beta,L}(x)\xrightarrow{L\to \infty} \frac{\beta^2}{2\beta-1} x^{2\beta-1},
\quad H_{\beta,L}(x)\xrightarrow{L\to \infty} \frac{\beta}{2(2\beta-1)} x^{2\beta}.
$$ 
These functions are also dominated by simpler expressions. For instance, $H_{\beta,L}(x)=\beta^2x^{2\beta}$ holds for all $L>0$ and $\beta\geq 1$, as one sees as follows. For $x\leq L$ there is nothing to show. For $x>L$, we first observe that 
\[H_{\beta,L}(x)=\frac{\beta^2}{2}L^{2(\beta-1)}x^2-\frac{2\beta^2(\beta-1)}{2\beta-1}L^{2\beta-1}x +\frac{\beta(\beta-1)}{2}L^{2\beta}.\]
Dropping the non-positive middle term and estimating by $x\geq L$ we find
\[H_{\beta,L}(x)\leq \frac{\beta^2}{2} x^{2\beta} +\frac{\beta(\beta-1)}{2}x^{2\beta} <\beta^2 x^{2\beta}.\]
Another important property is $\phi_{\beta,L}\in C^1(\R_+)$, with $\phi_{\beta,L}'\in L^{\infty}(\R_+)$ for all 
$L>0$, and so we may apply the chain rule to $\phi_{\beta,L}(S_-)$. Finally, since we are 
assuming a $C^1$ time-dependence, we have 
$\partial_t H_{\beta,L}(S_-)=(\partial_t S_-)G_{\beta,L}(S_-)$. 
We will use $\xi := G_{\beta,L}(S_-)$ as a test function in \eqref{eq:WeakS-Evol}. 
Note that by definition, $G_{\beta,L}(x)$ is linear for $x>L$ and hence $G_{\beta,L}(f)\in H^1(M,g)$ whenever $f\in H^1(M,g)$ (here we are also using that $\Vol(M)<\infty$). Then \eqref{eq:WeakS-Evol} implies
\begin{equation}\label{eq:NegEvol1}
\begin{split}
\int_M \partial_t H_{\beta,L}(S_-) \, d\Vol_g &\leq -(n-1) \int_ M \vert \nabla \phi_{\beta,L}(S_-)\vert_g^2 \, d\Vol_g 
\\ &-\int_M G_{\beta,L}(S_-) S_-(S_-+\rho)\, d\Vol_g.
\end{split}
\end{equation}
We then use \eqref{eq:DVol} to conclude
\begin{equation}\label{eq:NegEvol2}
\begin{split}
\int_M \partial_t H_{\beta,L}(S_-) \, d\Vol_g &=\partial_t \int_M H_{\beta,L}(S_-)\, d\Vol_g +\frac{n}{2} \int_M H_{\beta,L}(S_-)(S-\rho)\, d\Vol_g  \\ &=\partial_t \int_M H_{\beta,L}(S_-)\, d\Vol_g -\frac{n}{2} \int_M H_{\beta,L}(S_-)(S_- +\rho)\, d\Vol_g,
\end{split}
\end{equation}
where the last step uses $S H_{\beta,L}(S_-) \equiv (S_+-S_-)H_{\beta,L}(S_-)=-S_- H_{\beta,L}(S_-)$. 
Finally, we need a Sobolev inequality given to us by the positivity of the Yamabe constant, namely for any $f\in H^1(M,g)$ we have by the definition of $Y(M,g_0)$ (note that $Y(M,g_0) = Y(M,g)$ by conformal invariance)
\begin{equation}\label{Y-Sobolev}
Y(M,g_0) \norm{f}^2_{L^{\frac{2n}{n-2}}(M,g)} \leq 4\frac{n-1}{n-2} 
\norm{\nabla f}^2_{L^2(M,g)} +\int_M S \, f^2 \, d\Vol_g.
\end{equation}
We set $f=\phi_{\beta,L}(S_-)$. Observe
$\phi_{\beta,L}(S_-)^2 \, S = - \phi_{\beta,L}(S_-)^2 \, S_- $. Then \eqref{Y-Sobolev} implies
\begin{align}
&(n-1)\norm{\nabla \phi_{\beta,L}(S_-)}^2_{L^2(M,g)}\notag \\ &\geq \frac{n-2}{4} \,
Y(M,g_0)\norm{\phi_{\beta,L}(S_-)}^2_{L^{\frac{2n}{n-2}}(M,g)} 
+\frac{n-2}{4}\int_M \phi_{\beta,L}(S_-)^2 \, S_- \, d\Vol_g \notag \\ 
&\geq   \frac{n-2}{4}\int_M \phi_{\beta,L}(S_-)^2 \, S_- \, d\Vol_g.
\label{eq:NegEvol3}
\end{align}
Combining \eqref{eq:NegEvol1}, \eqref{eq:NegEvol2} and \eqref{eq:NegEvol3} yields
\begin{align*}
\partial_t \int_M H_{\beta,L}(S_-)\, d\Vol_g 
&\leq \int_M \left(\frac{n}{2} \, H_{\beta,L}(S_-)- G_{\beta,L}(S_-) S_- 
- \frac{n-2}{4} \, \phi_{\beta,L}(S_-)^2 \right)S_-\, d\Vol_g \\ 
 &+ \int_M  \rho\left(\frac{n}{2} \, H_{\beta,L}(S_-)- G_{\beta,L}(S_-) S_-\right)\, d\Vol_g.
\end{align*}
The claim is that the first group of terms on the right hand side is non-positive, 
which follows by a direct computation
\begin{equation*}
\begin{split}
&\frac{n}{2} \, H_{\beta,L}(x)-xG_{\beta,L}(x)-\frac{n-2}{4}\phi_{\beta,L}(x)^2 \\ 
&= \left\{ \begin{split} &\frac{(-1)}{4(2\beta-1)} \Bigl((4\beta+n)(\beta-1)+2\Bigr)x^{2\beta},  \quad x\leq L, 
\\ &\frac{L^{2\beta}}{4}\left( -2\beta^2\left(\frac{x}{L}\right)^2 -\frac{2(n-2)\beta(\beta-1)}{2\beta-1}
 \left(\frac{x}{L}\right) +(\beta-1)(n+2(\beta-1))\right),  x>  L.\end{split} \right.
\end{split}
\end{equation*}
In both cases one checks that the expressions are non-positive\footnote{For the $x\geq L$ case observe that the polynomial is negative for $x=L$, and the expression for $x>L$ clearly has a negative derivative. So the expression remains negative for $x>L$.}  for $\beta\geq 1$. Hence
using that $G_{\beta,L}(S_-) \geq 0$ and $\rho$ is non-increasing by \eqref{eq:rhoEvol}, we conclude
\[\partial_t \int_M H_{\beta,L}(S_-)\, d\Vol_g \leq \int_M  \frac{n\rho }{2}H_{\beta,L}(S_-) d\Vol_g\leq \frac{n\rho(0)}{2} \int_M H_{\beta,L}(S_-)\, d\Vol_g .\]
Integrating this shows
\[ \int_M H_{\beta,L}(S_-)\, d\Vol_g  (t) \leq e^{t\frac{n\rho(0)}{2}}   \int_M H_{\beta,L}(S_-)\, d\Vol_g  (t=0).\]
The conclusion will follow when we take the limit $L\to \infty$, which we can do for the following reason\footnote{This  
argument is applied several times, without writing out the details in the latter instances.}. 
On the left hand side we appeal to Fatou's lemma and the pointwise convergence of $H_{\beta,L}$;
\[\liminf_{L\to \infty} \int_M H_{\beta,L}(S_-)\, d\Vol_g \geq \int_M \liminf_{L\to \infty} H_{\beta,L}(S_-)\, d\Vol_g =\frac{\beta}{2(2\beta-1)}\int_M S^{2\beta}_- \, d\Vol_g.\]
The right hand side we deal with by the dominated convergence theorem. We showed above that $H_{\beta,L}(x)\leq \beta^2 x^{2\beta}$ holds for all $L>0$ and $\beta\geq 1$. Since we are assuming $(S_0)_- \in L^{\infty}(M)$, can use $\beta^2((S_0)_-)^{2\beta}$ as a dominating integrable function to deduce
\[\liminf_{L\to \infty} \int_M H_{\beta,L}((S_0)_-)\, d\mu =\lim_{L\to \infty} \int_M H_{\beta,L}((S_0)_-)\, d\mu=\frac{\beta}{2(2\beta-1)}\int_M ((S_0)_-)^{2\beta}\, d\mu\] 
Combined we conclude for $\beta \geq 1$
\[
\int_M S_-^{2\beta}\, d\Vol_g \leq  e^{t\frac{n\rho(0)}{2}}\int_M (S_0)_-^{2\beta} \, d\mu.
\]
This gives the conclusion when writing $2\beta=p$.
\end{proof}

\begin{Rem}
Let us again emphasize the novelty of this argument: it circumvents the maximum principle, 
and one only needs to know that $S\in C^1((0,T); H^1(M,g))\cap C^0([0,T];H^1(M,g))$, as 
assumed in \eqref{S-reg-ass}.
\end{Rem}

For completeness, let us also provide the classical widely known argument, cf.
\cite{Brendle}, using the weak maximum principle: we assume that $S$ satisfies \eqref{weak-max},
which is the case if $S \in C^{2,\A}(M\times [0,T])$. See Remark \ref{SC2} for 
conditions which ensure this regularity of $S$ along the flow.

\begin{Prop}
\label{Prop:LowerCurvatureBound}
Assume $S \in C^0(M\times [0,T])$ satisfies the weak maximum principle \eqref{weak-max}
and that $Y(M,g_0)>0$. Then $S$ admits a 
uniform lower bound
\[
S\geq  \min \, \{0,\inf_M S_0\}.
\]
\end{Prop}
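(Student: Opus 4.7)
The plan is to compare $S$ to a spatially constant subsolution via a parabolic weak maximum principle applied to the evolution equation \eqref{eq:ScalarEvol}. Set $m := \min\{0, \inf_M S_0\} \leq 0$, so the goal is $S \geq m$ on $M\times[0,T]$. The key algebraic observation is that the constant $m$ is a subsolution of \eqref{eq:ScalarEvol}: substituting $S\equiv m$ into the right-hand side produces $m(m-\rho)$, which is non-negative because $m\leq 0$ and, by Assumption~\ref{Y-assump} together with \eqref{eq:YamabeConst}, $\rho>0$ along the flow.

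To turn this subsolution comparison into a pointwise bound despite the fact that spatial infima need not be attained on $M$, for each $\epsilon>0$ I introduce the strictly positive shift
\begin{equation*}
w_\epsilon(x,t) := S(x,t) - m + \epsilon,
\end{equation*}
so that $w_\epsilon(\cdot,0)\geq\epsilon$ and $\partial_t w_\epsilon - (n-1)\Delta w_\epsilon = S(S-\rho)$. Suppose toward a contradiction that $w_\epsilon$ becomes non-positive somewhere. By continuity of $S$, there is a first time $t^\ast>0$ at which $\inf_M w_\epsilon(\cdot,t^\ast)=0$. The weak maximum principle \eqref{weak-max} then yields a Cauchy sequence $\{q_\ell\}\subset M$ with $w_\epsilon(q_\ell,t^\ast)\to 0$ and $\liminf_\ell \Delta_0 w_\epsilon(q_\ell,t^\ast)\geq 0$; the same statement transfers to the $g$-Laplacian because $g=u^{4/(n-2)}g_0$ has conformal factor uniformly bounded above and away from zero on $[0,T]$. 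Combining this with $\limsup_\ell \partial_t w_\epsilon(q_\ell,t^\ast)\leq 0$ coming from the minimality of $t^\ast$, the PDE at $(q_\ell,t^\ast)$ produces, in the limit,
\begin{equation*}
0 \;\geq\; (m-\epsilon)\bigl(m-\epsilon-\rho(t^\ast)\bigr) \;\geq\; \epsilon\,\rho(t^\ast) \;>\; 0,
\end{equation*}
because both factors $m-\epsilon$ and $m-\epsilon-\rho(t^\ast)$ are strictly negative with absolute values at least $\epsilon$ and $\rho(t^\ast)$ respectively. This contradiction forces $w_\epsilon>0$ on $M\times[0,T]$; letting $\epsilon\to 0$ then gives $S\geq m$.

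The main obstacle is the time-derivative step, namely justifying $\limsup_\ell \partial_t w_\epsilon(q_\ell,t^\ast)\leq 0$: at a fixed near-minimizer $q_\ell$ one only knows $w_\epsilon(q_\ell,t)\geq \inf_M w_\epsilon(\cdot,t)$ for $t<t^\ast$, not that $t\mapsto w_\epsilon(q_\ell,t)$ itself has a local minimum at $t^\ast$, so one cannot read off the sign of $\partial_t w_\epsilon$ along the Cauchy sequence directly. The standard remedy is an additional linear-in-$t$ perturbation $w_\epsilon\rightsquigarrow w_\epsilon-\delta t$ for small $\delta>0$, which strictly tilts the time dependence and makes the infimum $t\mapsto \inf_M (w_\epsilon - \delta t)(\cdot,t)$ strictly decrease across any would-be touching time; the contradiction above then survives with the extra $\delta$ absorbed into the strictly positive lower bound $\epsilon\rho(t^\ast)$, and one sends $\delta,\epsilon\to 0$ at the end. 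Modulo this routine parabolic bookkeeping, the conclusion reduces to the clean sign computation displayed above.
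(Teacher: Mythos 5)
Your proof is correct and rests on the same mechanism as the paper's: the weak maximum principle \eqref{weak-max} applied along a spatial near-minimizing sequence for $S$, combined with the observation that $S(S-\rho)>0$ wherever $S<0$ (using $\rho\geq Y(M,g_0)>0$). The paper packages this as the ODE inequality $\partial_t S_{\min}\geq S_{\min}(S_{\min}-\rho)$ and integrates it with a case split on the sign of $S_{\min}$, whereas your first-touching-time barrier argument with the $\epsilon$-shift and $\delta t$-tilt treats both cases in one sign computation; the bookkeeping you flag concerning the time derivative at a non-attained infimum (and, likewise, the passage between $\Delta_0$ in \eqref{weak-max} and the $g$-Laplacian in \eqref{eq:ScalarEvol}) is equally implicit in the paper's version.
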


\begin{proof}
By the weak maximum principle, we have for $S_{\min} := \inf\limits_M S$
\[\partial_t S_{\text{min}}\geq S_{\min}(S_{\min}-\rho).\]
If $S_{\min}$ is negative for all times, then the right hand side becomes positive, 
and we we get $S_{\min}\geq \inf\limits_{M} S_0$. 
If $S_{\min}$ is positive for all times, we can further estimate the right hand side using $\rho \leq \rho(0)$, cf. 
\eqref{eq:rhoEvol}. Dividing, we then get
\[
\frac{\partial_t S_{\min}}{S_{\min}(\rho(0)-S_{\min})} \geq -1.
\]
Integrating this differential inequality we find (writing $S_0^m := \inf_M S_0$)
\[
S_{\min}(t)\geq \frac{\rho(0) (S_0)_{\min} }{e^{\rho(0) t}
(\rho(0) - (S_0)_{\min}) + (S_0)_{\min}} \geq 0.
\]
If $S_{\min}$ changes the sign along the flow, the statement follows by
a combination of both estimates. 
\end{proof}

%%%%%%%%%%%%%%%%%%%%%%%%%%%%%%%%%
\section{Uniform bounds on the solution along the flow}\label{bounds-section}
%%%%%%%%%%%%%%%%%%%%%%%%%%%%%%%%%

The arguments of this section employ the following assumptions
\begin{equation}\label{u-bound-assump}
\begin{split}
&(M,g_0) \ \textup{is an admissible manifold}, \\
&u\in C^1((0,T);H^1(M))\cap C^0([0,T];H^1(M)), \\
&S\in C^1((0,T);H^1(M,g))\cap C^0([0,T];H^1(M,g)), \\
&H^1(M) = H^1(M,g), \ Y(M,g_0) > 0.
\end{split}
\end{equation}

\noindent These properties follow from Assumptions \ref{Y-assump}, 
\ref{admissible-assump}, \ref{Schauder-assump} and \ref{S-bound}.
\medskip

We begin with the upper bound on $u$, which follows easily from the 
 lower bound on the scalar curvature $S$, obtained in Proposition \ref{Prop:LowerCurvBound2}.
 
\begin{Prop}
\label{Prop:Upper-u-Bound}
Let $g= u^{\frac{4}{n-2}}g_0$ be a family of metrics, $u> 0$, such that 
\eqref{u-bound-assump} holds and the normalized Yamabe flow equation 
\eqref{eq:YamabeFlow} holds weakly. with $u(0)=1$. Assume furthermore that $(S_0)_-\in L^\infty(M)$, where $S_0$ is the scalar curvature of $g_0$. Then there exists some uniform constant $0<C(T)<\infty$, depending only 
on $T>0$ and $S_0$, such that $u\leq C(T)$ for all $t\in [0,T]$, $T<\infty$.
\end{Prop}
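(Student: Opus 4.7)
The plan is to recast the Yamabe flow as a purely reactive ODE for $u$ (with no spatial derivative of $u$ on the right-hand side), and then convert the resulting pointwise bound into an $L^\infty$ bound via a Moser iteration. Using $S = u^{-(n+2)/(n-2)} L_0(u)$ to substitute for $L_0(u)$ in \eqref{eq:YamabeFlow}, one reduces the flow equation to
\[
\partial_t u = \frac{n-2}{4}(\rho - S)\,u,
\]
understood weakly. From Proposition \ref{Prop:LowerCurvBound2}, taking the limit $p \to \infty$ (justified by the hypothesis $(S_0)_- \in L^\infty(M)$), we obtain the a.e.\ lower bound $S \geq -\|(S_0)_-\|_{L^\infty(M)}$, while \eqref{eq:rhoEvol} gives the monotonicity $\rho(t) \leq \rho(0)$. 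Setting $K := \rho(0) + \|(S_0)_-\|_{L^\infty(M)}$, a constant depending only on $S_0$, we thus have $\rho - S \leq K$ a.e.

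To turn the heuristic estimate $\partial_t u \leq \frac{(n-2)K}{4} u$ into a rigorous $L^\infty$ bound at the $H^1$ level of regularity given by \eqref{u-bound-assump}, I would mimic the Moser scheme used in Proposition \ref{Prop:LowerCurvBound2}. For $\beta \geq 1$ and $L > 0$, test the weak form against $G_{\beta,L}(u)$ from \eqref{G-def}; this lies in $H^1(M)$ since $G_{\beta,L}$ is Lipschitz with at most linear growth and $\Vol_{g_0}(M) < \infty$. Using $\partial_t H_{\beta,L}(u) = G_{\beta,L}(u)\,\partial_t u$ together with the purely algebraic identity
\[
2\beta\, H_{\beta,L}(u) - u\, G_{\beta,L}(u) \;=\; \beta^2(\beta-1)\,L^{2(\beta-1)}\,(u-L)_+^2 \;\geq\; 0
\qquad (u \geq 0,\ \beta \geq 1),
\]
which is a direct computation paralleling those in Proposition \ref{Prop:LowerCurvBound2}, one arrives at
\[
\partial_t \int_M H_{\beta,L}(u)\, d\mu \;\leq\; \frac{(n-2)K\beta}{2} \int_M H_{\beta,L}(u)\, d\mu.
\]

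Grönwall then gives $\int_M H_{\beta,L}(u)(t)\, d\mu \leq e^{(n-2)K\beta t/2} \int_M H_{\beta,L}(1)\, d\mu$, where the initial integral is finite and explicitly $\frac{\beta}{2(2\beta-1)}\Vol_{g_0}(M)$ once $L \geq 1$. Passing to $L \to \infty$ (Fatou on the left, dominated convergence on the right, using that $u(0) \equiv 1$ is a trivially integrable dominator) yields
\[
\|u(t)\|_{L^{2\beta}(M)}^{2\beta} \;\leq\; e^{(n-2)K\beta t/2}\,\Vol_{g_0}(M).
\]
Taking the $2\beta$-th root and letting $\beta \to \infty$ kills the volume factor (which tends to $1$), producing $\|u(t)\|_{L^\infty(M)} \leq e^{(n-2)Kt/4} =: C(T)$, with $C$ depending only on $T$ and $S_0$, as required.

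The main obstacle is not the conceptual content, which is just the linear ODE estimate $\partial_t u \leq \lambda u$ with $\lambda = (n-2)K/4$, but the bookkeeping needed to survive the weak regularity in \eqref{u-bound-assump}: one must justify $G_{\beta,L}(u)$ as an admissible test function, verify the chain rule identity for $H_{\beta,L}(u)$, verify the algebraic inequality above, and pass to the two successive limits $L \to \infty$ and $\beta \to \infty$ with control on the initial data. All of these steps are routine adaptations of the Moser iteration already executed for $S_-$ in Proposition \ref{Prop:LowerCurvBound2}.
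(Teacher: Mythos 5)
Your proposal is correct and rests on exactly the same ingredients as the paper's proof: the $L^\infty$ bound $S_-\leq \|(S_0)_-\|_{L^\infty(M)}$ from Proposition \ref{Prop:LowerCurvBound2}, the monotonicity $\rho\leq\rho(0)$, and Gr\"onwall applied to $\partial_t u\leq \frac{n-2}{4}\bigl(\|(S_0)_-\|_{L^\infty(M)}+\rho(0)\bigr)u$. The only difference is that the paper integrates this ODE pointwise in one line, whereas you run the Gr\"onwall argument through $L^{2\beta}$ norms with the $G_{\beta,L},H_{\beta,L}$ truncations and then send $\beta\to\infty$ --- a valid (and more scrupulous at the stated $H^1$ regularity) but not essentially different route.
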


\begin{proof}
We have by \eqref{eq:YF} and \eqref{eq:rhoEvol} that
\[
\partial_t u =  -\frac{n-2}{4} (S-\rho) u \leq \frac{n-2}{4}(S_- +\rho)u \leq \frac{n-2}{4}(S_-+\rho(0))u.\]
By Proposition \ref{Prop:LowerCurvBound2} we have $\|S_- \|_{L^\infty(M)}\leq \norm{(S_0)_-}_{L^\infty(M)}$, 
and hence setting $C:= \frac{n-2}{4}\left( \norm{(S_0)_-}_{L^\infty(M)} +\rho(0)\right)$, we conclude
\[
\partial_t u \leq C u\implies u\leq e^{CT} u_0 =e^{CT}.
\]
\end{proof}

\noindent The lower bound is more intricate and in many ways more interesting. 
The argument will rely on the upper bound on $u$ and the lower bound on $S$. 
The proof will be a mixture and modification of the methods of \cite[pp. 20-21]{ACM} and \cite[pp. 221-222]{Brendle}.

\begin{Thm}
\label{Prop:Lower-u-Bound}
Let $g= u^{\frac{4}{n-2}}g_0$ be a family of metrics, $u > 0$, such that 
\eqref{u-bound-assump} holds and the normalized Yamabe flow equation 
\eqref{eq:YamabeFlow} holds weakly with $u(0)=1$. Assume furthermore that $(S_0)_-\in L^\infty(M)$
and that $S_0\in L^q(M)$ for some $q>\frac{n}{2}$. 
Then there exists some uniform constant $c(T)>0$, depending only 
on $T>0$ and $S_0$, such that $c(T)\leq u$ for all $t\in [0,T]$.
\end{Thm}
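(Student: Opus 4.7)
The plan is a parabolic Moser iteration on the family $f_\beta := u^{-\beta}$ for a geometrically increasing sequence $\beta_k \to \infty$, in the spirit of \cite{ACM} and \cite[pp.~221--222]{Brendle}. A uniform bound $\|u^{-1}\|_{L^\infty(M\times[0,T])} \leq 1/c(T)$ will then yield the desired $u\geq c(T)>0$. Throughout I would use the upper bound $u\leq C(T)$ from Proposition \ref{Prop:Upper-u-Bound} and the $L^\infty$ bound on $S_-$ from Proposition \ref{Prop:LowerCurvBound2}, both valid under our hypotheses.

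First I would test the flow equation $\partial_t u = -\frac{n-2}{4}(S-\rho)u$ against $-2\beta u^{-1-2\beta}$ and integrate against $d\mu$. Substituting $S = u^{-(n+2)/(n-2)} L_0 u$ and integrating by parts on the Laplacian term yields an identity
\begin{equation*}
\partial_t \int_M u^{-2\beta}\, d\mu + \kappa_n \int_M |\nabla u^{-\beta'}|^2\, d\mu
= \frac{\beta(n-2)}{2}\left(\rho\int_M u^{-2\beta}\, d\mu - \int_M S_0 \, u^{-2\beta'}\, d\mu\right),
\end{equation*}
where $\beta' = \beta + 2/(n-2)$, and the constant $\kappa_n$ is bounded below by a positive constant \emph{independent of} $\beta$ (indeed $\kappa_n\to 4(n-1)$ for $\beta\to\infty$, thanks to the cancellation $\beta\cdot\beta/{\beta'}^2\to 1$). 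The uniform upper bound on $u$ makes $u^{-2\beta}$ and $u^{-2\beta'}$ comparable up to a constant depending only on $T$, so the exponents $\beta$ and $\beta'$ may be freely interchanged.

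Next I would apply the admissibility Sobolev inequality \eqref{eq:Sobolev} to $g := u^{-\beta'}$ to convert $\|\nabla g\|_{L^2}^2$ into a multiple of $\|g\|_{L^{2n/(n-2)}}^2$ up to an $L^2$ remainder. For the term $\int_M S_0\, g^2\, d\mu$ I would split $S_0 = (S_0)_+ - (S_0)_-$: the negative part is bounded in $L^\infty$ by Proposition \ref{Prop:LowerCurvBound2}, while the positive part satisfies, by H\"older combined with Gagliardo--Nirenberg interpolation between $L^2$ and $L^{2n/(n-2)}$,
\begin{equation*}
\int_M (S_0)_+ \, g^2\, d\mu \leq \|(S_0)_+\|_{L^q(M)}\bigl(\varepsilon\|g\|^2_{L^{2n/(n-2)}} + C_\varepsilon \|g\|^2_{L^2}\bigr),
\end{equation*}
since the assumption $q>n/2$ implies $2q/(q-1) < 2n/(n-2)$. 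Choosing $\varepsilon$ of order $1/\beta$ absorbs the Sobolev piece into $\kappa_n\|\nabla g\|^2_{L^2}$, and the remaining $L^2$ piece acquires a coefficient $C(T,S_0)\,\beta^\gamma$ with $\gamma = 2q/(2q-n)$ that is \emph{polynomial} in $\beta$. The outcome is the parabolic energy inequality
\begin{equation*}
\partial_t\|f_\beta\|^2_{L^2} + c_1\|g\|^2_{L^{2n/(n-2)}} \leq C(T,S_0)\,\beta^\gamma\,\|f_\beta\|^2_{L^2}.
\end{equation*}

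The final step is to integrate in time with $\|f_\beta(0)\|^2_{L^2}=1$, take the supremum over $t\in[0,T]$, and use H\"older interpolation between $L^\infty([0,T];L^2(M))$ and $L^2([0,T];L^{2n/(n-2)}(M))$ to produce a space-time $L^{2\chi}_{t,x}$ bound on $f_\beta$ with $\chi=(n+2)/n > 1$; equivalently, this controls $\|u^{-1}\|_{L^{2\beta\chi}(M\times[0,T])}$ in terms of $\|u^{-1}\|_{L^{2\beta}(M\times[0,T])}$. Iterating $\beta_{k+1} = \chi\beta_k$ starting from $\beta_0 = 1$ (a baseline $L^2_{t,x}$ bound on $u^{-1}$ follows from Step 1 with $\beta=1$), the step-$k$ factor $\bigl(C(T,S_0)\beta_k^\gamma\bigr)^{1/(2\beta_k)}$ has a summable logarithm because $\beta_k$ grows geometrically, so the infinite product converges and $\|u^{-1}\|_{L^\infty(M\times[0,T])}\leq 1/c(T)$ in the limit. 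The main obstacle is ensuring that the right-hand side coefficient in the energy inequality is only polynomial in $\beta$; an exponential growth -- which would arise, for instance, from naively applying Gronwall to the time-integrated inequality -- would spoil the Moser iteration, so in the last step one must treat $\int_0^T\|f_\beta\|^2_{L^2}\,dt$ structurally as the space-time $L^2$ quantity fed into the next iteration, rather than exponentiating it away.
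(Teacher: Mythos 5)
Your overall strategy --- a parabolic Moser iteration on $u^{-\beta}$ anchored at $u(0)=1$ --- differs from the paper's, and it contains a gap I do not see how to close as written. The critical step is your claim that the upper bound on $u$ makes $u^{-2\beta}$ and $u^{-2\beta'}$ "comparable up to a constant depending only on $T$", where $\beta'=\beta+\tfrac{2}{n-2}$. The bound $u\le C(T)$ from Proposition \ref{Prop:Upper-u-Bound} only gives $u^{-2\beta'}\ge C(T)^{-4/(n-2)}u^{-2\beta}$; the reverse inequality $u^{-2\beta'}\le C\,u^{-2\beta}$ is equivalent to a uniform lower bound on $u$, i.e.\ to the statement being proved. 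After substituting $S=u^{-\frac{n+2}{n-2}}L_0u$ and integrating by parts, every surviving term on the right of your energy identity carries the exponent $2\beta'$ (the $S_0$-term and the $L^2$-remainder from \eqref{eq:Sobolev}), while the quantity whose time derivative you propagate carries the exponent $2\beta$. (Incidentally the sign of your right-hand side is reversed: one gets $+\int S_0u^{-2\beta'}-\rho\int u^{-2\beta}$, so the term needing $S_0\in L^q$ is $(S_0)_+$, not $(S_0)_-$; that slip is harmless.) The resulting term $C\beta^\gamma\norm{u^{-\beta'}}^2_{L^2(M)}$ cannot be absorbed into $c_1\norm{u^{-\beta'}}^2_{L^{2n/(n-2)}(M)}$, since the embedding constant of $L^{2n/(n-2)}$ into $L^2$ is $\Vol(M)^{2/n}=1$ while the coefficient grows like $\beta^\gamma$; it is not controlled by $\int u^{-2\beta}$, so Gronwall does not apply; and the parabolic interpolation requires $\sup_t\norm{u^{-\beta'}}^2_{L^2}$, whereas the energy inequality only yields $\sup_t\norm{u^{-\beta}}^2_{L^2}$. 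Interpolating $\norm{u^{-\beta'}}_{L^2}$ between $(\int u^{-2\beta})^{\beta'/2\beta}$ and $\norm{u^{-\beta'}}_{L^{2n/(n-2)}}$ and applying Young leads to $\partial_tY\le C\beta^{\gamma'}Y^{1+2/((n-2)\beta)}$ with $\gamma'>1$, whose guaranteed existence time shrinks to zero as $\beta\to\infty$. So neither the baseline $\beta=1$ nor the iteration step closes, and the initial datum $u(0)=1$ cannot be propagated by this scheme. A further, fixable, point: your test functions $u^{-1-2\beta}$ are not known to lie in $H^1(M)$ before the theorem is proved, so a regularization such as $(u+\varepsilon)^{-1-2\delta}$ is needed.

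The paper avoids all of this by arguing elliptically, one time slice at a time, and by using a non-collapsing ingredient absent from your proposal. Eliminating $\partial_tu$ from \eqref{eq:YamabeFlow} gives $-\tfrac{4(n-1)}{n-2}\Delta_0u=(u^{4/(n-2)}S-S_0)u$, and the lower bound on $S$ (Proposition \ref{Prop:LowerCurvBound2}) together with the upper bound on $u$ yields $(-\Delta_0+P)u\ge0$ for a fixed $P\in L^q(M)$ depending only on $S_0$ and $T$. One then needs only a single small $\delta>0$: testing with $(u+\varepsilon)^{-1-2\delta}$ and using \eqref{eq:Sobolev} plus a Poincar\'e inequality, everything is absorbed into the gradient term for $\delta$ small except $\norm{(u+\varepsilon)^{-\delta}}_{L^2(B)}$ on one ball. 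That last quantity is controlled by the weak Harnack inequality \cite[Theorem 8.18]{GT} for the supersolution $u^{2n/(n-2)}$, combined with $\Vol_g(M)=1$ and the exhaustion property \eqref{eq:Exhaustion}: some ball must carry a definite amount of $g$-volume, hence $u$ is bounded below there. This yields $u^{-\delta}\in H^1(M)$ uniformly in $t$, and the elliptic Moser iteration of \cite[Proposition 1.8]{ACM} applied to the subsolution $u^{-\delta}$ of $-\Delta_0-\delta P$ gives the $L^\infty$ bound. Any repaired parabolic argument would still need a substitute for this Harnack/volume non-collapsing input in order to control the $\beta'$-terms.
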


\begin{proof}
By combining \eqref{eq:YamabeFlow} and \eqref{eq:YF} we may solve away the term $\partial_t u$ and get
\[
-4 \, \frac{n-1}{n-2} \, \Delta_0 u =\left( u^{\frac{4}{n-2}} S-S_0\right)u.
\]
Using $(S_0)_-\in L^\infty(M)$ and $u \in L^\infty(M\times [0,T])$ by Proposition \ref{Prop:Upper-u-Bound}, 
we may define 
$$
P:= \frac{n-2}{4(n-1)} \left( S_0 +\| u\|^{\frac{4}{n-2}}_{L^{\infty}(M_T)} \norm{(S_0)_-}_{L^{\infty}(M)}\right)
\in L^q(M),
$$
Note that $P$ only depends on $S_0$ and $T$. 
Furthermore, Proposition \ref{Prop:LowerCurvBound2} yields
\begin{equation}
(-\Delta_0 + P) u \geq 0.
\label{eq:EllipticSuper}
\end{equation}
Let us explain the proof idea. 
Assume we can show that there is some $\delta>0$ such that $u^{-\delta}\in H^1(M)$ 
uniformly in $t \in [0,T]$. Then \eqref{eq:EllipticSuper} implies
\begin{equation}\label{minus-delta}
\begin{split}
(-\Delta_0 - \delta P ) u^{-\delta} &= \delta u^{-1-\delta} \Delta_0 u -\delta(1+\delta) u^{-2-\delta} \vert \nabla u\vert^2 -\delta Pu^{-\delta} \\ &=-\delta u^{-1-\delta}\left( -\Delta_0 + P\right) u -\delta(1+\delta) u^{-2-\delta} \vert \nabla u\vert^2\leq 0.
\end{split}
\end{equation}
This is precisely the setting of \cite[Proposition 1.8]{ACM}, 
which then concludes by Moser Iteration and Sobolev inequality \eqref{eq:Sobolev}
\[
\norm{u^{-\delta}}_{L^\infty(M)} \leq C \norm{u^{-\delta}}_{H^1(M)},
\]
where the constant $C>0$ depends on $\delta P$, hence only on $T$ 
and $S_0$, but not on $t$. Under our temporary  assumption \eqref{eq:EllipticSuper}, 
we thus get a uniform bound on $u^{-\delta}$, which gives a uniform lower bound on $u$.
\medskip

Hence we only need to show that $u^{-\delta}\in H^1(M)$ uniformly. Let $\varepsilon,\delta>0$ and (following \cite[pp. 20-21]{ACM}) define the functions $\psi_{\varepsilon}(u):= (u+\varepsilon)^{-\delta}$ and $\phi_{\varepsilon}(u):= (u+\varepsilon)^{-1-2\delta}$. These are both in $H^1(M)$ since $u$ is.  Using $\phi_{\varepsilon} $ as a test function in the weak formulation of \eqref{eq:EllipticSuper} we deduce
\[-\frac{1+2\delta}{\delta^2} \norm{ \nabla \psi_{\varepsilon}(u)}^2_{L^2(M)} +\int_M P u\phi_{\varepsilon}(u)\, d\mu\geq 0\]
and, using that $u\phi_{\varepsilon}(u)\leq \psi_{\varepsilon}(u)^2$ along with the Hölder inequality, we find
\begin{equation}
 \norm{ \nabla \psi_{\varepsilon}(u)}^2_{L^2(M)}\leq \frac{\delta^2}{1+2\delta} \norm{P}_{L^q(M)} \norm{\psi_\varepsilon(u)^2}_{L^{\frac{q}{q-1}}(M)}.
 \label{eq:H1BoundStep1}
 \end{equation}
Since $q>\frac{n}{2}$ we have $\frac{q}{q-1}<\frac{n}{n-2}$ and thus $\norm{\psi_\varepsilon(u)^2}_{L^{\frac{q}{q-1}}(M)}\leq \norm{\psi_{\varepsilon}(u)}^2_{L^{\frac{2n}{n-2}}(M)}$. By the Sobolev inequality \eqref{eq:Sobolev} we know 
\begin{equation}\label{psi1}
\norm{\psi_{\varepsilon}(u)}^2_{L^{\frac{2n}{n-2}}(M)} \leq A_0 \norm{\nabla \psi_{\varepsilon}(u)}_{L^2(M)}^2 +B_0 \norm{\psi_{\varepsilon}(u)}^2_{L^2(M)}.
\end{equation}
Next we need a Poincar\'{e} inequality. Let $B\subset M$ be a ball. Then, exactly as in 
\cite[Lemma 1.14]{ACM}, there exists a constant $C_B>0$ such that 
\begin{equation}\label{psi2}
\norm{f}^2_{L^2(M)} \leq C_B\left(\norm{\nabla f}^2_{L^2(M)} + \norm{f}^2_{L^2(B)}\right),
\end{equation}
holds for all $f\in H^1(M)$. Plugging \eqref{psi1} and \eqref{psi2} in \eqref{eq:H1BoundStep1} results in
\begin{equation*} 
\resizebox{0.95\hsize}{!}{
$\norm{ \nabla \psi_{\varepsilon}(u)}^2_{L^2(M)}\leq \frac{\delta^2}{1+2\delta}  \norm{P}_{L^q(M)} \left( (A_0 +B_0C_B) \norm{\nabla \psi_{\varepsilon}(u)}^2_{L^2(M)} +B_0 C_B\norm{\psi_{\varepsilon}(u)}^2_{L^2(B)}\right),$}
\end{equation*}
which is equivalent to the following inequality
\begin{equation*} 
\resizebox{0.95\hsize}{!}{
$\left(1- \frac{\delta^2}{1+2\delta}\norm{P}_{L^q(M)}(A_0 +B_0C_B) \right) \norm{ \nabla \psi_{\varepsilon}(u)}^2_{L^2(M)}\leq \frac{\delta^2}{1+2\delta} \norm{P}_{L^q(M)}B_0 C_B  \norm{\psi_{\varepsilon}(u)}^2_{L^2(B)}.$}
\end{equation*}
Choosing $\delta>0$ small enough so that the left hand side becomes positive, we get a uniform (meaning now both $t$- and $\varepsilon$-independent) bound on $ \norm{ \nabla \psi_{\varepsilon}(u)}_{L^2(M)}$ if we can get  a uniform bound on  $\norm{\psi_{\varepsilon}(u)}_{L^2(B)}$. The uniform bound on $\norm{\psi_{\varepsilon}(u)}_{L^2(B)}$ will come 
from the local theory for elliptic supersolutions. Observe that since $u$ satisfies $\eqref{eq:EllipticSuper}$, 
$u^{\frac{2n}{n-2}}$ satisfies (by the same computation as in \eqref{minus-delta})
\[
-\Delta_0 u^{\frac{2n}{n-2}} +\frac{2n}{n-2} P u^{\frac{2n}{n-2}}\geq 0.
\]
 Let $R>0$ be such that $B_{4R}(x)\subset M$ for some $x\in M$. Then, according to \cite[Theorem 8.18, p. 194]{GT} the following weak Harnack inequality holds on  $B_{2R}(x)$, namely there is a constant $C>0$ independent of $u$ but depending on $g_0$, $R$ and $n$ such that 
 \begin{equation}
\Vol_g(B_{2R}(x)) \equiv \norm{u^{\frac{2n}{n-2}}}_{L^1(B_{2R}(x))}\leq C \inf_{B_R(x)} u^{\frac{2n}{n-2}},
 \label{eq:LocalHarnack}
 \end{equation}
where in the first identification we recalled $d\Vol_g=u^{\frac{2n}{n-2}} d\mu$. 
By admissibility of $(M,g_0)$, the assumption \eqref{eq:Exhaustion} holds and we may take a collection of balls $B_{4R_i}(x_i)\subset M$, indexed by $i=1,\cdots, N< \infty$, with the property that 
\begin{equation}\label{balls}
\left(1-\Vol_{g_0}\left(\bigcup_{i=1}^N  B_{2R_i}(x_i)\right)\right)\norm{u}^{\frac{2n}{n-2}}_{L^{\infty}(M_T)}<1.
\end{equation}
Let $C_i$ be the constant in \eqref{eq:LocalHarnack} for the ball $B_{2R_i}(x_i)$. 
By summing all the individual inequalities \eqref{eq:LocalHarnack} for each $i=1,\cdots, N$, we have
 \[\sum_{i=1}^N \Vol_g(B_{2R_i}(x_i)) \leq \sum_{i=1}^N C_i \inf_{B_{R_i}(x_i)} u^{\frac{2n}{n-2}} \leq NC \max_i \left( \inf_{B_{R_i}(x_i)}  u^{\frac{2n}{n-2}} \right)\]
 with $C:= \max_i C_i$. The left hand side we can bounded from below by
 \begin{align*}
 \sum_{i=1}^N \Vol_g(B_{2R_i}(x_i)) &\geq \Vol_g\left( \bigcup_{i=1}^N B_{2R_i}(x_i)\right) \\ &=1-\Vol_g\left(M\setminus  \bigcup_{i=1}^N B_{2R_i}(x_i)\right)\\ 
 &\geq 1-\Vol_{g_0}\left(M\setminus \bigcup_{i=1}^N  B_{2R_i}(x_i)\right)\norm{u}^{\frac{2n}{n-2}}_{L^{\infty}(M_T)}=:c
 \end{align*}
which is positive by choice of the balls subject to \eqref{balls}. Thus
 \[
 0 < c \leq NC \max_i \left( \inf_{B_{R_i}(x_i)}  u^{\frac{2n}{n-2}} \right).
 \]
 This shows that there has to be a ball $B_{R_i}(x_i)$ with $u$ uniformly bounded from below by $c(T)>0$ for $t\in [0,T]$. On this ball we thus get a uniform bound $\psi_{\varepsilon}(u) \geq c(T)^{-\delta}$, which gives our desired $t$- and $\varepsilon$-independent bound on $\norm{\psi_{\varepsilon}(u)}_{L^2(B)}^2$, and thereby that $u^{-\delta}\in H^1(M)$ uniformly.
\end{proof}

\begin{Cor}
\label{Cor:Sob}
Under the conditions of Theorem \ref{Prop:Lower-u-Bound}, 
one can find uniform constants $0<A(T),B(T)<\infty$, depending only on $T>0$ and initial scalar curvature $S_0$ (but not 
dependent on $t$), such that for all $f\in H^1(M,g)$
\begin{equation}
\norm{f}^2_{L^{\frac{2n}{n-2}}(M,g)}\leq A(T) \norm{\nabla f}^2_{L^2(M,g)}+B(T)\norm{f}^2_{L^2(M,g)},
\label{eq:gSobolev}
\end{equation}
i.e. \eqref{eq:Sobolev} holds for the time-dependent metric but with time-independent constants. 
\end{Cor}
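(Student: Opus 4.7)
The plan is to exploit the uniform two-sided bounds on $u$ that we have already obtained: Proposition~\ref{Prop:Upper-u-Bound} gives an upper bound $u\leq C(T)$ and Theorem~\ref{Prop:Lower-u-Bound} gives a lower bound $u\geq c(T)>0$, both depending only on $T$ and $S_0$. Since the metric $g$ differs from $g_0$ only by the conformal factor $u^{\frac{4}{n-2}}$, every $g$-norm appearing in \eqref{eq:gSobolev} can be sandwiched between constant multiples of the corresponding $g_0$-norm, and the time-independent Sobolev inequality \eqref{eq:Sobolev} for $g_0$ (available by admissibility, Assumption~\ref{admissible-assump}) then does the work.

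Concretely, I would first record the pointwise identities $d\Vol_g=u^{\frac{2n}{n-2}} d\mu$ and $|\nabla f|_g^2=u^{-\frac{4}{n-2}}|\nabla f|_{g_0}^2$. Multiplying and integrating gives
\begin{equation*}
\|\nabla f\|_{L^2(M,g)}^2 = \int_M u^{2} |\nabla f|^2_{g_0}\, d\mu, \quad
\|f\|_{L^2(M,g)}^2 = \int_M u^{\frac{2n}{n-2}} f^2\, d\mu,
\end{equation*}
and
\begin{equation*}
\|f\|_{L^{\frac{2n}{n-2}}(M,g)}^2
= \left(\int_M u^{\frac{2n}{n-2}} |f|^{\frac{2n}{n-2}}\, d\mu\right)^{\frac{n-2}{n}}.
\end{equation*}
Inserting $c(T)\leq u\leq C(T)$ shows that each $g$-norm is comparable to the corresponding $g_0$-norm, with constants depending only on $c(T)$ and $C(T)$, and hence only on $T$ and $S_0$. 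In particular $H^1(M,g)=H^1(M)$ is already granted by \eqref{u-bound-assump}, so \eqref{eq:Sobolev} may be legitimately applied to any $f\in H^1(M,g)$.

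Combining these estimates, I start from
\begin{equation*}
\|f\|_{L^{\frac{2n}{n-2}}(M,g)}^2 \leq C(T)^{2}\,\|f\|_{L^{\frac{2n}{n-2}}(M)}^2,
\end{equation*}
apply \eqref{eq:Sobolev} on the right, and finally re-express the $g_0$-norms of $f$ and $\nabla f$ by the $g$-norms using the lower bound $u\geq c(T)$:
\begin{equation*}
\|f\|_{L^{\frac{2n}{n-2}}(M,g)}^2 \leq C(T)^{2}\bigl(A_0\|\nabla f\|_{L^2(M)}^2 + B_0\|f\|_{L^2(M)}^2\bigr)
\leq A(T)\|\nabla f\|_{L^2(M,g)}^2 + B(T)\|f\|_{L^2(M,g)}^2,
\end{equation*}
with $A(T):=A_0 C(T)^{2} c(T)^{-2}$ and $B(T):=B_0 C(T)^{2} c(T)^{-\frac{2n}{n-2}}$. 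Both constants depend only on $T$ and $S_0$, as required.

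There is really no serious obstacle here: the argument is a routine bookkeeping exercise in conformal scaling, and the one potentially delicate point, namely that \eqref{eq:Sobolev} applies to $f\in H^1(M,g)$, is guaranteed by the equality $H^1(M)=H^1(M,g)$ built into \eqref{u-bound-assump} (itself a consequence of the uniform bounds on $u$ and $u^{-1}$). The entire content of the corollary is that the Moser iteration of the previous section has already done the hard analytic work.
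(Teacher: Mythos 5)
Your proposal is correct and follows exactly the paper's own argument: transfer each $g$-norm to the corresponding $g_0$-norm via the conformal factor, apply the fixed Sobolev inequality \eqref{eq:Sobolev}, and absorb powers of $\sup u$ and $\inf u$ into $A(T),B(T)$; your constants $A(T)=A_0 C(T)^2 c(T)^{-2}$ and $B(T)=B_0C(T)^2c(T)^{-\frac{2n}{n-2}}$ coincide with those in the paper. No issues.
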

\begin{proof}
Due to \eqref{eq:Sobolev} we have for all $f\in H^1(M) = H^1(M,g)$
\[\norm{f}^2_{L^\frac{2n}{n-2}(M,g_0)} \leq A_0 \norm{\nabla f}^2_{L^2(M,g_0)} +B_0 \norm{f}^2_{L^2(M,g_0)}.\]
Using $g= u^{\frac{4}{n-2}}g_0$ we conclude a similar estimate with respect to $g$
\begin{equation} \begin{split}
&\norm{f}^2_{L^{\frac{2n}{n-2}}(M,g)}\leq A(T) \norm{\nabla f}^2_{L^2(M,g)} + B(T)\norm{f}^2_{L^2(M,g)}, \\
&\textup{where} \ A(T):= A_0 \frac{(\sup_{M_T} u)^2}{(\inf_{M_T} u)^{2}}, \quad 
B(T):= B_0 \frac{(\sup_{M_T} u)^2}{(\inf_{M_T} u)^{\frac{2n}{n-2}}}.
\end{split} \end{equation}
Now the statement follows, since $u, u^{-1} \in L^\infty(M\times [0,T])$
by Proposition \ref{Prop:Upper-u-Bound} and Theorem \ref{Prop:Lower-u-Bound}.
\end{proof}

\noindent We shall need this Sobolev inequality \eqref{eq:gSobolev} when we tackle the 
upper bound on the scalar curvature $S$ in section \ref{upper-section}.

%%%%%%%%%%%%%%%%%%%%%%%%%%%%%%%%%
\section{Upper bound on the scalar curvature along the flow}\label{upper-section}
%%%%%%%%%%%%%%%%%%%%%%%%%%%%%%%%%

The arguments of this section employ the following assumptions
\begin{equation}\label{Smax-bound-assump}
\begin{split}
&(M,g_0) \ \textup{is an admissible manifold}, \\
&S\in C^1((0,T);H^1(M,g))\cap C^0([0,T];H^1(M,g)), \\
&\textup{The Sobolev inequality } \eqref{eq:gSobolev} \textup{ holds},\\
&H^1(M) = H^1(M,g), \ Y(M,g_0) > 0, \\
&C^\infty_c(M) \ \textup{is dense in } H^1(M).
\end{split}
\end{equation}

\noindent These properties follow from Assumptions \ref{Y-assump}, 
\ref{admissible-assump}, \ref{Schauder-assump}, \ref{S-bound}, as in the 
previous section. The Sobolev inequality  \eqref{eq:gSobolev} holds under the same
assumptions in view of Corollary \ref{Cor:Sob}. In this section we use 
\eqref{Smax-bound-assump} to show a uniform upper bound on the scalar curvature. 
More precisely, we will show the following result.

\begin{Thm}
\label{Prop:CurvatureBound}
Let $S$ evolve according to 
\eqref{eq:ScalarEvol} with initial curvature $S_0\in L^{\frac{n^2}{2(n-2)}}(M)$, 
and its negative part $(S_0)_- \in L^{\infty}(M)$. Then, assuming \eqref{Smax-bound-assump} holds, 
there exists a uniform constant $0<C(T)<\infty$, depending only on $T>0$ and $S_0$, such that 
\[\norm{S}_{L^\infty\left(M\times \left[\frac{T}{2},T\right]\right)}\leq C(T).\]
\end{Thm}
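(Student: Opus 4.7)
The plan is a parabolic Moser iteration on $S_+$, combining the weak differential inequality \eqref{eq:S+Evol}, the time-independent Sobolev inequality \eqref{eq:gSobolev} from Corollary \ref{Cor:Sob}, and the regularization scheme $\phi_{\beta,L}, G_{\beta,L}, H_{\beta,L}$ from Section \ref{lower-section}. A key structural difference from the $S_-$ analysis in Proposition \ref{Prop:LowerCurvBound2} is that the Yamabe/Sobolev step now contributes with the wrong sign (since $\int_M S \cdot \phi_{\beta,L}(S_+)^2 d\Vol_g = \int_M S_+ \phi_{\beta,L}(S_+)^2 d\Vol_g \geq 0$), so one cannot absorb via the Yamabe constant and must rely on Corollary \ref{Cor:Sob} as a standalone estimate. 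Testing the weak form of \eqref{eq:S+Evol} against $G_{\beta,L}(S_+) \in H^1(M,g)$, invoking \eqref{eq:DVol} for the volume evolution, and using the pointwise identity $S \cdot H_{\beta,L}(S_+) = S_+ \cdot H_{\beta,L}(S_+)$ (since $S_+, S_-$ have disjoint supports), produces
\begin{equation*}
\partial_t \!\int_M \!H_{\beta,L}(S_+) d\Vol_g + (n{-}1)\!\int_M \!|\nabla \phi_{\beta,L}(S_+)|^2 d\Vol_g \leq \!\int_M \!\left( G_{\beta,L}(S_+) S_+ - \tfrac{n}{2} H_{\beta,L}(S_+) \right)(S_+ {-} \rho) \, d\Vol_g.
\end{equation*}
A direct computation gives, for $x \leq L$, that the bracketed coefficient equals $\frac{\beta(4\beta - n)}{4(2\beta-1)} x^{2\beta}$, while for $x > L$ it equals $(1-\beta)\beta^2 L^{2(\beta-1)}(x-L)^2$, which is non-positive for $\beta \geq 1$. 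Running the argument at $\beta = n/4$ (with a minor adjustment for $n=3$, where $n/4 < 1$ forces a slightly different truncation) and letting $L \to \infty$ by Fatou on the LHS and dominated convergence on the RHS shows $\partial_t \|S_+\|^{n/2}_{L^{n/2}(M,g)} \leq 0$, yielding the uniform bound
\[
\|S_+(t)\|_{L^{n/2}(M,g)} \leq \|(S_0)_+\|_{L^{n/2}(M)}, \quad t \in [0, T],
\]
where $(S_0)_+ \in L^{n/2}(M)$ follows from $S_0 \in L^q(M)$ with $q > n/2$ and $\Vol(M) < \infty$.

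For $\beta > n/4$ the same derivation yields a parabolic inequality with the superlinear bad term $\frac{\beta(4\beta - n)}{4(2\beta-1)} \int_M S_+^{2\beta+1} d\Vol_g$ on the right. To control it, I would apply Hölder with conjugate exponents $\tfrac{n}{n-2}$ and $\tfrac{n}{2}$, then the time-independent Sobolev inequality \eqref{eq:gSobolev}:
\begin{equation*}
\int_M S_+^{2\beta+1} d\Vol_g \leq \|S_+\|_{L^{n/2}(M,g)} \|S_+^\beta\|^2_{L^{2n/(n-2)}(M,g)} \leq \|S_+\|_{L^{n/2}(M,g)} \bigl( A(T) \|\nabla S_+^\beta\|^2_{L^2(M,g)} + B(T) \|S_+^\beta\|^2_{L^2(M,g)} \bigr).
\end{equation*}
To render the prefactor $\|S_+\|_{L^{n/2}}$ small enough to absorb the gradient term into the LHS, I would employ a De Giorgi truncation: apply the scheme to $v_k := (S_+ - k)_+$ for large $k$, and exploit $\lim_{k\to\infty}\|v_k\|_{L^{n/2}(M,g)} = 0$, which follows from absolute continuity of the $L^{n/2}$-integral together with the uniform $L^{n/2}$-bound above. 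After absorption, one obtains the Moser-type inequality
\[
\partial_t \|v_k\|^{2\beta}_{L^{2\beta}(M,g)} + c(\beta) \|v_k\|^{2\beta}_{L^{2\beta n/(n-2)}(M,g)} \leq C(\beta, T, S_0) \|v_k\|^{2\beta}_{L^{2\beta}(M,g)}
\]
driving the iteration.

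The main obstacle will be the careful execution of the parabolic Moser cascade $p_{j+1} = p_j \cdot n/(n-2)$: each iteration step improves integrability by the Sobolev factor $n/(n-2)$ at the cost of losing a fraction of the time interval, and the constants $c(\beta)$, $C(\beta, T, S_0)$ must be tracked through countably many iterations so that their cumulative effect shrinks $[0, T]$ down to $[T/2, T]$ with a finite $L^\infty$-bound on the other end (this is the standard "logarithmic time-cylinder shrinking" of parabolic Moser iteration). The hypothesis $S_0 \in L^q(M)$ with $q = \frac{n^2}{2(n-2)} = \frac{n}{2} \cdot \frac{n}{n-2}$ sits exactly one Sobolev step above the critical $L^{n/2}$ threshold and supplies the cushion needed to initiate the iteration beyond the critical exponent. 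Together with the lower bound on $S$ from Proposition \ref{Prop:LowerCurvBound2}, the resulting uniform $L^\infty$-bound on $S_+$ yields $\|S\|_{L^\infty(M \times [T/2, T])} \leq C(T, S_0) < \infty$, as claimed.
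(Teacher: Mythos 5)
Your overall architecture matches the paper's: first a critical $L^{n/2}$ bound via $\beta=n/4$ and the truncations $\phi_{\beta,L},G_{\beta,L},H_{\beta,L}$ (your computation of $xG_{\beta,L}(x)-\tfrac n2 H_{\beta,L}(x)$ agrees with \eqref{eq:HGComp}, and this step is essentially Lemma \ref{Lem1}), then a supercritical upgrade, then a parabolic Moser cascade on shrinking time cylinders. The cascade itself is also in the right spirit: the paper's \eqref{eq:MoserStart}--\eqref{eq:MoserStep} apply H\"older with exponent $p=\tfrac{n^2}{2(n-2)}$ against the uniform supercritical bound of Lemma \ref{Lem:UnifCurvBound}, which creates a fixed exponent gap $N<\tfrac{n+2}{n}$ driving the iteration without any absorption, so the $\beta$-dependence of the constants is harmless there.

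The gap is in your supercritical upgrade. To absorb $\int_M S_+^{2\beta+1}\,d\Vol_g\leq \norm{S_+}_{L^{n/2}}\norm{S_+^\beta}^2_{L^{2n/(n-2)}}$ into the gradient term you pass to $v_k=(S_+-k)_+$ and invoke smallness of $\norm{v_k}_{L^{n/2}}$. What you actually need is $\sup_{t\in[0,T]}\norm{v_k(t)}_{L^{n/2}(M,g)}\to 0$ as $k\to\infty$, i.e.\ \emph{uniform} integrability of the family $\{S_+(t)^{n/2}\}_{t\in[0,T]}$. This does not follow from the uniform bound $\norm{S_+(t)}_{L^{n/2}}\leq C$ together with fixed-$t$ absolute continuity of the integral: a bounded family in $L^{n/2}$ can concentrate (e.g.\ $f_j=j\chi_{A_j}$ with $\mu(A_j)=j^{-n/2}$ satisfies $\norm{(f_j-k)_+}_{L^{n/2}}\to 1$ for every fixed $k$), and excluding such concentration of the curvature as $t$ approaches some time is exactly the point at issue. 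One can rescue the claim when $n\leq 6$ from $S\in C^0([0,T],H^1(M,g))$, the embedding $H^1\hookrightarrow L^{2n/(n-2)}\subseteq L^{n/2}$ on a finite-volume space, and compactness of $[0,T]$; but for $n\geq 7$ one has $\tfrac{2n}{n-2}<\tfrac n2$, the embedding goes the wrong way, and your argument does not close. The paper's Lemma \ref{Lem:UnifCurvBound} avoids this entirely: it dominates $xG_{\beta,L}-\tfrac n2H_{\beta,L}$ by a function $f_{\beta,L}$ that is \emph{linear} for $x>L$ (so all norms are finite at finite $L$), splits the resulting term by H\"older and Young into a piece absorbed into the Sobolev/gradient term with a margin $\delta$ uniform in $L$, and a remainder controlled by $\norm{H_{\beta,L}(S)}_{L^1}^{1+\frac{2}{4\beta-n}}$; the resulting logarithmic Gronwall inequality then closes using the \emph{time-integrated} bound $\int_0^T\bigl(\int_M S^{\frac{n^2}{2(n-2)}}d\Vol_g\bigr)^{\frac{n-2}{n}}dt\leq C(T)$ from Lemma \ref{Lem1}, with $S_0\in L^{\frac{n^2}{2(n-2)}}(M)$ entering as the initial value of the Gronwall quantity. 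I recommend replacing your truncation step with that interpolation-plus-Gronwall argument (cf.\ also \cite{Brendle}).
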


The proof proceeds in two steps. First step is to prove an $L^{\frac{n^2}{2(n-2)}}(M,g)$-norm
bound on $S$, uniform in $t\in [0,T]$. That uniform bound rests on a chain of arguments of 
\cite[Lemma 2.2, Lemma 2.3, Lemma 2.5]{Brendle} (also to be found in \cite[Lemma 3.3]{SS}) 
that apply in our setting as well. In the second step we perform a Moser iteration argument by 
following \cite{MCZ12}. Our proofs are close to those in \cite{Brendle} with some additional arguments 
due to lower regularity. 

\begin{Lem}\label{Lem1} Under the conditions of Theorem \ref{Prop:CurvatureBound},
there exists for any finite $T>0$ a uniform constant $0<C(T)<\infty$, depending 
only on $T$ and $S_0$, such that for all $t\in [0,T]$ we have the following estimate
\footnote{\, Below, we will denote all uniform positive constants, depending only on $T$ 
and $S_0$ either by $C(T)$ or $C_T$, unless stated otherwise.}
\begin{equation}
\int_0^T \left( \int_M S^{\frac{n^2}{2(n-2)}} \, d\Vol_g \right)^{\frac{n-2}{n}}\, dt \leq C(T),
\quad \norm{S}_{L^{\frac{n}{2}}(M,g)} \leq C,
\label{eq:Claim}
\end{equation}
where the second constant $C$ only depends on $S_0$, not on $T$.
\end{Lem}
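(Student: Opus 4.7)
The plan is to adapt Brendle's argument \cite[Lemmas 2.3, 2.5]{Brendle} (cf.\ \cite[Lemma 3.3]{SS}) to the present low-regularity setting. The exponent $p = n/2$ is critical: testing the evolution inequality \eqref{eq:S+Evol} against a cutoff of $S_+^{p-1}$ produces an exact cancellation between the reaction term $S_+(S_+-\rho)$ and the volume-change contribution from \eqref{eq:DVol}, leaving only a favorable gradient term. Since $S_+^{n/2-1}$ is not a priori admissible as a test function in $H^1(M,g)$, I will use the Lipschitz truncations $\phi_{\beta,L}, G_{\beta,L}, H_{\beta,L}$ from \eqref{G-def} with $\beta = n/4$, in full analogy with the proof of Proposition \ref{Prop:LowerCurvBound2}, and send $L\to\infty$ at the end.

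Testing \eqref{eq:S+Evol} against $\xi = G_{\beta,L}(S_+) \in H^1(M,g)$, integrating by parts using $G_{\beta,L}' = (\phi_{\beta,L}')^2$, and using $\partial_t H_{\beta,L}(S_+) = G_{\beta,L}(S_+)\,\partial_t S_+$ combined with \eqref{eq:DVol}, I arrive at
\begin{equation*}
\frac{d}{dt}\int_M H_{\beta,L}(S_+)\, d\Vol_g + (n-1)\int_M |\nabla\phi_{\beta,L}(S_+)|^2\, d\Vol_g \leq \int_M (S_+-\rho)\bigl[G_{\beta,L}(S_+) S_+ - \tfrac{n}{2} H_{\beta,L}(S_+)\bigr] d\Vol_g,
\end{equation*}
where the simplification $H_{\beta,L}(S_+)\cdot S = H_{\beta,L}(S_+)\cdot S_+$ (from $H_{\beta,L}(S_+)\, S_- \equiv 0$) has been used. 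A direct computation with $\beta = n/4$ shows that the bracket vanishes on $\{S_+\leq L\}$ and equals $-\beta^2(\beta-1)L^{2\beta-2}(S_+-L)^2$ on $\{S_+ > L\}$. For $\beta \geq 1$, i.e.\ $n\geq 4$, this is non-positive; choosing $L > \rho(0) \geq \rho(t)$ makes $(S_+-\rho)$ positive on the support of the bracket, so the entire right-hand side is non-positive. Integrating in $t$ and sending $L\to\infty$ by monotone convergence, which is justified since $(S_0)_+ \in L^{n/2}(M)$ (the hypothesis $S_0 \in L^{n^2/(2(n-2))}$ together with $\Vol_{g_0}(M)<\infty$ and $(S_0)_-\in L^\infty$ forces it), I conclude
\begin{equation*}
\int_M S_+^{n/2}(t)\, d\Vol_g + \frac{n(n-1)(n-2)}{4}\int_0^t\!\!\int_M S_+^{n/2-2}|\nabla S_+|^2\, d\Vol_g\, d\tau \leq \int_M ((S_0)_+)^{n/2}\, d\mu.
\end{equation*}

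The first summand on the left gives the uniform-in-$t$ $L^{n/2}(M,g)$ bound on $S_+$, which together with $\norm{S_-}_{L^\infty}\leq\norm{(S_0)_-}_{L^\infty}$ from Proposition \ref{Prop:LowerCurvBound2} and $\Vol_g(M)=1$ yields the second inequality in \eqref{eq:Claim} with a constant depending only on $S_0$. For the first inequality I apply the time-uniform Sobolev inequality \eqref{eq:gSobolev} of Corollary \ref{Cor:Sob} to $f = S_+^{n/4}$, using the identities $|\nabla S_+^{n/4}|^2 = (n/4)^2 S_+^{n/2-2}|\nabla S_+|^2$ and $\norm{S_+^{n/4}}^2_{L^{2n/(n-2)}(M,g)} = \bigl(\int_M S_+^{n^2/(2(n-2))}\, d\Vol_g\bigr)^{(n-2)/n}$; integrating in $t\in[0,T]$ bounds the desired quantity by a multiple of the space-time gradient integral already controlled above, plus $B(T)\cdot T\cdot\sup_t\norm{S_+}^{n/2}_{L^{n/2}(M,g)}$. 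Both terms are $\leq C(T)$. The contribution of $S_-$ is trivially bounded by $\norm{(S_0)_-}_{L^\infty}^{n/2}\, T$ via $\Vol_g(M)=1$, so the pointwise decomposition $|S|^{n^2/(2(n-2))} = S_+^{n^2/(2(n-2))} + S_-^{n^2/(2(n-2))}$ yields the first estimate in \eqref{eq:Claim}.

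The main technical obstacle is to perform the formal differentiation of $\int_M S_+^{n/2}\, d\Vol_g$ and the integration by parts rigorously, given only the regularity $S\in C^1((0,T); H^1(M,g))$ under which $S_+^{n/2-1}$ need not belong to $H^1(M,g)$. The $\phi_{\beta,L}$-truncation handles this exactly as in Proposition \ref{Prop:LowerCurvBound2}; the key new algebraic input is the closed-form identity for $G_{\beta,L}(x) x - \tfrac{n}{2}H_{\beta,L}(x)$ on $\{x>L\}$ stated above, which fixes the sign of the error term for $\beta = n/4 \geq 1$. For $n=3$, where $\beta < 1$ and $\phi_{\beta,L}$ fails to be Lipschitz at the origin, a routine additional smoothing of $\phi_{\beta,L}$ near $0$ (e.g.\ replacing $S_+$ by $S_++\varepsilon$ and letting $\varepsilon\downarrow 0$) is required.
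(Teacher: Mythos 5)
Your argument for $n\geq 4$ is correct and is essentially the paper's own proof for that range: the same truncations $\phi_{\beta,L}$, $G_{\beta,L}$, $H_{\beta,L}$ with $\beta=n/4$, the same cancellation $xG_{\beta,L}(x)-\tfrac{n}{2}H_{\beta,L}(x)=-\beta^2(\beta-1)L^{2\beta-2}(x-L)^2$ on $\{x>L\}$ (and $0$ for $x\leq L$), the same sign argument via $L\geq\rho(0)\geq\rho(t)$, and the same passage $L\to\infty$. Two small points of rigor: the limit $L\to\infty$ is justified by Fatou on the left and dominated convergence on the right (monotonicity in $L$ is not needed and not obvious); and since it is not known a priori that $S_+^{n/4}\in H^1(M,g)$, the Sobolev inequality \eqref{eq:gSobolev} should be applied to $\phi_{\beta,L}(S)$ \emph{before} letting $L\to\infty$, as the paper does in its footnote --- applying it directly to $S_+^{n/4}$ needs an extra weak-compactness justification.

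The genuine gap is the case $n=3$, which you dismiss as requiring only ``routine additional smoothing of $\phi_{\beta,L}$ near $0$''. The loss of the Lipschitz property at the origin for $\beta=3/4<1$ (so that $G_{\beta,L}(S_+)$ may fail to be an admissible test function) is real but secondary. The essential obstruction is that your key sign identity flips: for $\beta<1$ one has $xG_{\beta,L}(x)-\tfrac{n}{2}H_{\beta,L}(x)=\beta^2(1-\beta)L^{2\beta-2}(x-L)^2>0$ on $\{x>L\}$, growing quadratically in $x$, so that after multiplication by $(S_+-\rho)$ the right-hand side contains a term of cubic growth in $S_+$ on $\{S_+>L\}$ which cannot be absorbed by $\int H_{\beta,L}(S_+)\,d\Vol_g\sim\int S_+^{3/2}\,d\Vol_g$; no choice of $L$, and no $\varepsilon$-shift of the argument, repairs this sign. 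The paper's actual remedy for $3\leq n\leq 4$ is to abandon the $L$-truncation entirely: because $\beta=n/4\leq 1$, the map $x\mapsto(x+\sigma)^{\beta}$ is globally $C^1$ on $[0,\infty)$ with bounded derivative and $(S+\sigma)^{2\beta-1}$ is sublinear, so $\tfrac{\beta^2}{2\beta-1}(S+\sigma)^{2\beta-1}$ is itself an admissible test function; the exact cancellation then holds up to an explicit error of order $\sigma$, which is sent to zero at the end. You would need to supply this (or an equivalent) argument to cover $n=3$.
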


\begin{proof}
It suffices to prove the statement for $S_+$ and $S_-$ individually.
By Proposition \ref{Prop:LowerCurvBound2}, the statement holds for the negative part $S_-$. 
Thus we only need to prove the claim for $S_+$. We may therefore assume without loss of generality that $S\geq 0$, so that 
$S \equiv S_+$, and use \eqref{eq:S+Evol} as the evolution equation. 
\medskip

The claim will follow from the evolution equation \eqref{eq:ScalarEvol}, but we have to argue a bit differently depending on whether $3\leq n\leq 4$ or $n>4$. The idea is the same in all dimensions $n\geq 3$ however. Let us start with $3\leq n\leq 4$.
Fix any $\sigma>0$, and set $\beta=\frac{n}{4}$. Since $\beta\leq 1$, the function $x\mapsto (x+\sigma)^{\beta}$ is in $C^1[0,\infty)$ with bounded derivative. Thus, we may apply the chain rule to $(S+\sigma)^{\beta}$ and conclude $(S+\sigma)^\beta\in C^1([0,T];H^1(M,g))$. We use 
$\frac{\beta^2}{2\beta-1}(S+\sigma)^{2\beta-1}$ as a test function with $\beta=\frac{n}{4}$ in the weak formulation of \eqref{eq:S+Evol}, which yields the following inequality
\begin{align*}
&\frac{\beta^2}{2\beta-1}\int_M (S+\sigma)^{2\beta-1} \partial_t(S+\sigma)\, d\Vol_g +(n-1)\int_M \vert \nabla (S+\sigma)^{\beta}\vert^2\, d\Vol_g \\
& \leq \frac{\beta^2}{2\beta-1} \int_M S(S-\rho)(S+\sigma)^{2\beta-1}\, d\Vol_g.
\end{align*}
Using \eqref{eq:DVol} results in
\begin{align*}
&\frac{\beta}{2(2\beta-1)} \partial_t \int_M (S+\sigma)^{2\beta}\, d\Vol_g +(n-1)\int_M \vert \nabla (S+\sigma)^{\beta}\vert^2\, d\Vol_g \\
&\leq \frac{\beta}{2\beta-1}\int_M \beta  S(S-\rho)(S+\sigma)^{2\beta-1} -\frac{n}{4}(S-\rho)(S+\sigma)^{2\beta}\, d\Vol_g\\
&= -\frac{\beta^2 \sigma}{2\beta-1} \int_M (S-\rho)(S+\sigma)^{2\beta-1}\, d\Vol_g\\
&=-\frac{\beta^2\sigma}{2\beta-1} \int_M (S+\sigma-\rho)(S+\sigma)^{2\beta-1}\, d\Vol_g +\frac{\sigma^2 \beta^2}{2\beta-1} \int_M (S+\sigma)^{2\beta-1}\, d\Vol_g\\
&\leq \frac{\sigma \beta^2(\sigma+\rho(0))}{2\beta-1} \int_M (S+\sigma)^{2\beta-1}\, d\Vol_g \leq  \frac{\sigma \beta^2(\sigma+\rho(0))}{2\beta-1} \int_M (S+\sigma)^{2\beta}\, d\Vol_g,
\end{align*}
where the first equality is due to $\beta=\frac{n}{4}$, the penultimate inequality uses $\rho(0)\geq \rho(t)$,
and the final inequality is due to Hölder with $p=\frac{\beta}{\beta-1}$ and $q=\beta$.
We want to integrate this inequality in time. Note that any inequality of the 
form $\partial_t w(t) +a(t) \leq b w(t)$ with $a(t)\geq 0$ yields  $\partial_t w \leq b w$ and hence
$w(t) \leq e^{bt} w(0)$. Plugging this estimate into the original differential inequality leads to 
$\partial_t w +a \leq be^{bt} w(0)$. Integrating the latter inequality in time yields $w(t) + \int_0^t a(s) ds \leq e^{bt} w(0)$.
We therefore conclude that
\begin{equation} 
\resizebox{0.99\hsize}{!}{
$\int\limits_M (S+\sigma)^{\frac{n}{2}} \, d\Vol_g (T) +\frac{4(n-2)(n-1)}{n} \int\limits_0^T 
\int\limits_M \vert\nabla (S+\sigma)^{\frac{n}{4}}\vert^2\, d\Vol_g \leq 
e^{\frac{\sigma(\sigma+\rho(0)) nT}{2}} \int\limits_M (S_0+\sigma)^{\frac{n}{2}}\, d\mu.$}
\label{eq:n/2Bound}
\end{equation}
This is for any $\sigma>0$. Sending $\sigma\to 0$, using Fatou's lemma on the left hand side and the monotone convergence theorem on the right hand side yields (on dropping the non-negative term with $\nabla S$)
\[\int_M S^{\frac{n}{2}} \, d\Vol_g(T)\leq \int_M S_0^{\frac{n}{2}} \, d\mu. \] 
This yields our uniform $L^{\frac{n}{2}}(M,g)$-bound on $S$ in \eqref{eq:Claim}. 
Returning to \eqref{eq:n/2Bound}, we appeal to the Sobolev inequality \eqref{eq:gSobolev} to deduce
\[\int_0^T \norm{(S+\sigma)^{\frac{n}{4}}}^2_{L^{\frac{2n}{n-2}}(M,g)} \, dt\leq \left( \frac{A(T)n}{4(n-2)(n-1)}+TB(T)\right)e^{\frac{\sigma(\sigma+\rho(0)) nT}{2}} \int_M (S_0+\sigma)^{\frac{n}{2}}\, d\mu,\]
hence also
\[\int_0^T \left(\int_M \vert S\vert ^{\frac{n^2}{2(n-2)}}\, d\Vol_g\right)^{\frac{n-2}{n}} \, dt\leq C(T).\]
This proves the claim for $3\leq n\leq 4$. \medskip

\noindent For $n> 4$ the claim will follow similarly, but the above test function does not have bounded derivative for $n>4$, and we neither know that it is in $H^1$, nor do we know that the chain rule applies.  We therefore argue similarly to the the proof of Proposition \ref{Prop:LowerCurvBound2}, where we introduced the functions $\phi_{\beta,L}$, $G_{\beta,L}$ and
$H_{\beta,L}$. We set here again $\beta = n/4$. Using $G_{\beta,L}(S)$ as a test function in \eqref{eq:ScalarEvol} we find
\begin{align*}
\int\limits_M G_{\beta,L}(S) (\partial_t S) \, d\Vol_g+(n-1) \int\limits_M \left \vert \nabla \phi_{\beta,L}(S)\right\vert^2\, d\Vol_g 
\leq \int\limits_M S(S-\rho) G_{\beta,L}(S)\, d\Vol_g. 
\end{align*}
Using the evolution equation \eqref{eq:DVol} for the volume form, we have
\begin{align}
&\partial_t \int_M H_{\beta,L}(S)\, d\Vol_g +(n-1)\int_M \vert \nabla \phi_{\beta,L}(S)\vert^2\, d\Vol_g \notag \\ 
&\leq \int_M (S-\rho)\left(SG_{\beta,L}(S)-\frac{n}{2} H_{\beta,L}(S)\right)\, d\Vol_g.
\label{eq:CurvatureEvolStep}
\end{align}
One readily checks from the definition of $G_{\beta,L}$ and $H_{\beta,L}$ in Proposition \ref{Prop:LowerCurvBound2} that
\begin{align}
&x \, G_{\beta,L}(x)-\frac{n}{2} \, H_{\beta,L}(x) \notag\\
&=\begin{dcases}
\frac{\beta}{2\beta-1} x^{2\beta} \left(\beta-\frac{n}{4}\right),& x\leq L, \\
 \beta^2 L^{2\beta} \left( \left(1-\frac{n}{4}\right)\left(\frac{x}{L}\right)^2 +
 \frac{2(\beta-1)}{2\beta-1} \left(\frac{n}{2}-1\right)\frac{x}{L} -\frac{n(\beta-1)}{4\beta}\right),& x>L, 
\end{dcases}
\label{eq:HGComp}
\end{align}
and from this one sees that $xG_{\beta,L}(x)-\frac{n}{2}H_{\beta,L}(x)\leq 0$ for $\beta = \frac{n}{4}$ and $n\geq  4$ as follows. For $x\leq L$ there is nothing to show. For $x>L$, notice that 
\begin{align*}
&\beta^2 L^{2\beta} \left( \left(1-\frac{n}{4}\right)\left(\frac{x}{L}\right)^2 +\frac{2(\beta-1)}{2\beta-1} \left(\frac{n}{2}-1\right)\frac{x}{L} -\frac{n(\beta-1)}{4\beta}\right)\\
&=-\beta^2(\beta-1) L^{2\beta} \left(\frac{x}{L}-1\right)^2\leq 0
 \end{align*}
 where we have substituted $n=4\beta$ and recognized a square.\footnote{This is the point where we need $n\neq 3$, since in this case $\beta-1<0$ and the above expression fails to be negative for $x>L$.} 
 Hence, using again that $\rho$ is non-increasing along the flow, we conclude that
\begin{equation}
\partial_t \int_M H_{\beta,L}(S)\, d\Vol_g +(n-1)\int_M \vert \nabla \phi_{\beta,L}(S)\vert^2\, d\Vol_g \leq 0
\notag
\end{equation}
holds for any $L\geq \rho(0)$.
This is a differential inequality of the same kind as in the above  $3\leq n\leq 4$ case. Integrating it we deduce for any $t\in [0,T]$\begin{equation}\label{auxiliary1}
\begin{split}
\int\limits_M H_{\beta,L}(S)\, d\Vol_g (T) &+
(n-1) \int\limits_0^T \int\limits_M \vert \nabla \phi_{\beta,L}(S)\vert^2\, d\Vol_g \, dt
\\ &\leq \int\limits_M H_{\beta,L} (S_0) \, d\mu,
\end{split}
\end{equation}
Using $\beta=\frac{n}{4}$ and letting $L\to \infty$ this
yields, using Fatou's lemma and dominated convergence exactly as in the final step of the 
proof of Proposition \ref{Prop:LowerCurvBound2} (neglecting the positive second summand on the left hand side of 
\eqref{auxiliary1}) the following inequality
\begin{equation}\label{auxiliary2}
\| S \|_{L^{\frac{n}{2}}(M,g)} = \left( \int\limits_M S^{\frac{n}{2}}\, d\Vol_g \right)^{\frac{2}{n}}
\leq \left(   \int\limits_M \vert S_0\vert^{\frac{n}{2}} \, d\mu \right)^{\frac{2}{n}} \equiv C,
\end{equation}
where the constant $C(T)>0$ depends only
on $T$ and $S_0$. This yields the second estimate in \eqref{eq:Claim}, in the case of $n> 4$.
For the first estimate in \eqref{eq:Claim} note that $\phi_{\beta,L}(S) \in H^1(M,g)$\footnote{
Note that a priori we do not know if $S^{n/4} \in H^1(M;g)$ and thus cannot 
directly apply the Sobolev inequality \eqref{eq:gSobolev} to $S^{n/4}$. However we do know that
$\phi_{\beta,L}(S) \in H^1(M,g)$, since $\phi_{\beta,L}(x)$ is linear for $x>L$ and 
$S \in H^1(M,g)$ for each fixed time argument.} and thus by 
\eqref{eq:gSobolev} and \eqref{auxiliary1} we deduce
\begin{align*}
&\int_0^T \left( \int_M \vert \phi_{\beta,L}(S) \vert^{\frac{2n}{n-2}} \, d\Vol_g \right)^{\frac{n-2}{n}}\, dt
\\ &\leq A(T) \int_0^T \int_M \vert \nabla \phi_{\beta,L}(S)\vert^2 d\Vol_g \, dt 
+ B(T)  \int_0^T \int_M \vert \phi_{\beta,L}(S)\vert^2 d\Vol_g \, dt 
\\ &\leq \frac{A(T)}{n-1} \left( \int\limits_M H_{\beta,L} (S_0) \, d\mu
- \int\limits_M H_{\beta,L}(S)\, d\Vol_g (T) \right) 
\\ &+ B(T)  \int_0^T \int_M \vert \phi_{\beta,L}(S)\vert^2 d\Vol_g \, dt.
\end{align*}
Thus, letting $L\to \infty$ we conclude using Fatou's lemma and dominated convergence as 
before, with $\beta = n/4$ and \eqref{auxiliary2}
\begin{equation}\label{eq:IntegratedBound}
\begin{split}
&\int_0^T \left( \int_M S^{\frac{n^2}{2(n-2)}} \, d\Vol_g \right)^{\frac{n-2}{n}}\, dt
\\ &\leq \left(\frac{nA(T)}{4(n-1)(n-2)}  
+ B(T) T\right) \int_M  \vert S_0\vert^{\frac{n}{2}} d\mu  \equiv C(T),
\end{split}
\end{equation}
where the uniform constant $C(T)>0$ depending only on $T$ and $S_0$. This proves the
first estimate in \eqref{eq:Claim} for $n> 4$.
\end{proof}

\begin{Lem} Under the conditions of Theorem \ref{Prop:CurvatureBound},
there exists for any finite $T>0$ a uniform constant $0<C(T)<\infty$, depending 
only on $T$ and $S_0$, such that for all $t\in [0,T]$ we have the following estimate
\[
\int_M \left\vert S \right\vert^{\frac{n^2}{2(n-2)}}\, d\Vol_g \leq C(T).
\]
\label{Lem:UnifCurvBound}
\end{Lem}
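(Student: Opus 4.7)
The idea is to run the test-function argument of Lemma \ref{Lem1} with $\beta := n^2/(4(n-2))$ instead of $\beta = n/4$; with this choice $2\beta = n^2/(2(n-2))$, so in the limit $L \to \infty$, $H_{\beta,L}(S)$ becomes a positive multiple of $S^{n^2/(2(n-2))}$. Since $(S_0)_- \in L^\infty(M)$, Proposition \ref{Prop:LowerCurvBound2} gives $S_- \in L^\infty(M \times [0,T])$, and one may assume $S \geq 0$ without loss of generality.

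Use $G_{\beta,L}(S)$ as a test function in the weak form of \eqref{eq:ScalarEvol}, apply \eqref{eq:DVol}, and repeat the manipulation leading to \eqref{eq:CurvatureEvolStep} to obtain
\begin{equation*}
\partial_t \int_M H_{\beta,L}(S) \, d\Vol_g + (n-1) \int_M \vert \nabla \phi_{\beta,L}(S) \vert^2 \, d\Vol_g \leq \int_M (S - \rho) \left( S G_{\beta,L}(S) - \tfrac{n}{2} H_{\beta,L}(S) \right) d\Vol_g.
\end{equation*}
The identity \eqref{eq:HGComp} still applies, but now $\beta > n/4$ reverses its sign: $S G_{\beta,L}(S) - \tfrac{n}{2} H_{\beta,L}(S)$ is non-negative, with leading term proportional to $S^{2\beta}$ as $L \to \infty$, while the $-\rho$ contribution yields a non-positive integrand which we discard. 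After passing to the limit $L \to \infty$ via Fatou and dominated convergence (as in Lemma \ref{Lem1}), the task reduces to estimating $\int_M S^{2\beta+1} \, d\Vol_g$ by Hölder (with exponents $n/2$ and $n/(n-2)$) followed by the Sobolev inequality \eqref{eq:gSobolev} applied to $S^\beta$:
\begin{equation*}
\int_M S^{2\beta+1} \, d\Vol_g \leq \norm{S}_{L^{n/2}(M,g)} \norm{S^\beta}^2_{L^{2n/(n-2)}(M,g)} \leq \norm{S}_{L^{n/2}(M,g)} \bigl( A(T) \norm{\nabla S^\beta}^2_{L^2(M,g)} + B(T) \norm{S^\beta}^2_{L^2(M,g)} \bigr),
\end{equation*}
with the $L^{n/2}$-factor uniformly bounded in $t$ by Lemma \ref{Lem1}.

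The main obstacle is absorbing the resulting $\norm{\nabla S^\beta}^2_{L^2}$ term into the gradient term on the left-hand side. A direct absorption requires the coefficient in front to be strictly smaller than $n-1$, which we cannot ensure in general. Following \cite{Brendle, SS}, one introduces a cutoff $N > 0$ and splits $\int S^{2\beta+1} = \int_{\{S \leq N\}} + \int_{\{S > N\}}$: on $\{S \leq N\}$ the integrand is dominated by $N \cdot S^{2\beta}$, contributing only to the zeroth-order coefficient, while on $\{S > N\}$ the Hölder estimate involves $\norm{S \, \mathbb{1}_{\{S > N\}}}_{L^{n/2}(M,g)}$ in place of $\norm{S}_{L^{n/2}(M,g)}$. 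This localized norm can be made small, at least in a time-averaged sense, by invoking the time-integrated bound \eqref{eq:Claim} of Lemma \ref{Lem1}, making absorption possible at the cost of an $N$-dependent zeroth-order contribution. Once absorption succeeds, the differential inequality takes the Grönwall form $F'(t) \leq \gamma(t) F(t) + K(T)$, where $F(t)$ is a positive multiple of $\int_M S^{n^2/(2(n-2))} \, d\Vol_g$, and both $K(T)$ and $\gamma \in L^1([0, T])$ depend only on $T$ and $S_0$. Grönwall's inequality then yields the claimed uniform bound.
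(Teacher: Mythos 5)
Your starting point and target coincide with the paper's: test \eqref{eq:ScalarEvol} with $G_{\beta,L}(S)$ for $\beta=\tfrac{n^2}{4(n-2)}$, note that $xG_{\beta,L}-\tfrac n2 H_{\beta,L}$ loses its good sign, and aim for a Gr\"onwall inequality whose coefficient is integrable in time thanks to the first estimate of Lemma \ref{Lem1}. However, two steps of your route have genuine gaps. First, you pass to the limit $L\to\infty$ \emph{before} deriving any bound, reducing matters to an estimate of $\int_M S^{2\beta+1}\,d\Vol_g$ and of $\norm{\nabla S^{\beta}}^2_{L^2(M,g)}$. Neither quantity is known to be finite a priori: the only available regularity is $S\in H^1(M,g)$, so $S^{\beta}\in H^1$ and $S\in L^{2\beta+1}$ are exactly the kind of statements this lemma is meant to produce, not consume; absorbing one possibly infinite term into another is vacuous, and dominated convergence on the right-hand side has no dominating integrable function at positive times. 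This is precisely why the paper keeps the truncation alive through the whole argument and, moreover, replaces $xG_{\beta,L}-\tfrac n2H_{\beta,L}$ by a majorant $f_{\beta,L}$ that is \emph{linear} for $x>L$, so that every norm in the H\"older--Young step is finite for finite $L$ (see the footnote after \eqref{eq:HEvolStep2}); for $n=3$ even that fails and a modified family $\tilde\phi_{\beta,L},\tilde G_{\beta,L},\tilde H_{\beta,L}$ is required — a case your outline does not address. Any correct proof must perform the absorption uniformly in $L$ and send $L\to\infty$ only at the very end.

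Second, the absorption itself. After your H\"older and Sobolev steps the dangerous term is $\epsilon(t)\,A(T)\norm{\nabla S^{\beta}}^2_{L^2(M,g)}$ with $\epsilon(t):=\bigl(\int_{\{S>N\}}S^{n/2}\,d\Vol_g\bigr)^{2/n}$ up to constants, and absorbing it into $(n-1)\norm{\nabla\phi_{\beta,L}(S)}^2_{L^2(M,g)}$ requires $\epsilon(t)$ to be small \emph{pointwise in $t$}, uniformly on $[0,T]$. The time-integrated bound \eqref{eq:Claim} gives only time-averaged smallness, which does not help: at times where $\epsilon(t)$ is large, the leftover gradient term is not controlled by $F(t)=\int_M S^{2\beta}d\Vol_g$, and no Gr\"onwall inequality results. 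In Brendle and Schwetlick--Struwe the pointwise smallness of the tail comes instead from Chebyshev, $\int_{\{S>N\}}S^{n/2}\,d\Vol_g\le N^{\frac n2-2\beta}\int_M S^{2\beta}d\Vol_g$, i.e.\ from the very quantity being bounded, which forces a continuity/bootstrap argument in time rather than a one-shot Gr\"onwall. The paper sidesteps this entirely: it interpolates the $L^{2\beta+1}$-norm of $F_{\beta,L}(S)$ between $L^{2n\beta/(n-2)}$ and $L^{2\beta}$ and applies Young's inequality with a free parameter $\delta$; the smallness needed for absorption comes from choosing $\delta\le 4(n-1)/(n^3\beta A(T))$ — valid for all $t$ and all $L$ — and the price is a superlinear term $C_\delta\norm{H_{\beta,L}(S)}_{L^1(M,g)}^{1+\frac{n-2}{n}}$. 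This yields $\partial_t\norm{H_{\beta,L}(S)}_{L^1}\le C(1+\norm{H_{\beta,L}(S)}_{L^1}^{\frac{n-2}{n}})\norm{H_{\beta,L}(S)}_{L^1}$, which is where your ``$\gamma\in L^1$'' Gr\"onwall form finally appears: Lemma \ref{Lem1} is used to show $\gamma(t)=C(1+F(t)^{\frac{n-2}{n}})$ is integrable, not to make the tail small. If you replace your cutoff splitting by this Young-inequality step, carried out at finite $L$ with the linear majorant $f_{\beta,L}$, your outline becomes the paper's proof.
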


\begin{proof}
As in the previous lemma we have to split the argument into cases based on the dimension. We first show the statement for $n\geq 4$.
We will again use the inequality \eqref{eq:CurvatureEvolStep}. However 
while in Lemma \ref{Lem1} we have set $\beta=n/4$, here we will use the inequality \eqref{eq:CurvatureEvolStep} for
$\beta=\frac{n^2}{4(n-2)}$. For this choice of $\beta$ the expression $xG_{\beta,L}(x)-\frac{n}{2}H_{\beta,L}(x)$ 
is not necessarily non-positive any longer and we estimate it against a new approximation function
\begin{equation} 
f_{\beta,L}(x) := \left\{
\begin{split}
& \beta x^{2\beta}, & x\leq L, \\ 
& n\beta^2 L^{2\beta-1}x,  & x>L.
 \end{split} \right.
 \end{equation}
By \eqref{eq:HGComp} one sees that $xG_{\beta,L}(x)-\frac{n}{2}H_{\beta,L}(x)\leq f_{\beta,L}(x)$ 
holds for all $\beta\geq 1$ and $L>0$, in case $n\geq 4$. One important aspect to notice is that 
$f_{\beta,L}(x)$ is linear in $x$ for $x>L$, as opposed to quadratic for $H_{\beta,L}(x)$ and $xG_{\beta,L}(x)$.  
This will become important below. Returning to \eqref{eq:CurvatureEvolStep}, and applying \eqref{eq:gSobolev} 
to the term $\norm{\nabla \phi_{\beta,L}(S)}^2_{L^2(M,g)}$, we find after some reshuffling
\begin{equation}\label{H-estimate2}
\begin{split}
&\partial_t \norm{H_{\beta,L}(S)}_{L^1(M,g)} \\ 
&\leq (n-1)\frac{B(T)}{A(T)} \norm{\phi_{\beta,L}(S)^2}_{L^1(M,g)}
-\frac{(n-1)}{A(T)} \norm{\phi_{\beta,L}(S)}_{L^{\frac{2n}{n-2}}(M,g)}^2 \\
&+\rho(0)\norm{SG_{\beta,L}(S)-\frac{n}{2}H_{\beta,L}(S)}_{L^1(M,g)}
+\norm{Sf_{\beta,L}(S)}_{L^1(M,g)}.
\end{split}\end{equation} 
A straightforward computation shows for all $\beta\geq 1$ and $L>0$
\begin{equation}
12\beta H_{\beta,L}(x)\geq \phi_{\beta,L}(x)^2, \quad 4\beta H_{\beta,L}(x)\geq xG_{\beta,L},
\label{eq:H-Comparison}
\end{equation}
holds, and here is a way of seeing this. For $x\leq L$ these are both obvious from the definitions, so we look at $x>L$. One first notices that 
\begin{align*}
\phi_{\beta,L}(x)^2 &=\beta^2L^{2(\beta-1)}x^2-2\beta(\beta-1)L^{2\beta-1}x +(\beta-1)^2L^{2\beta}\\
&\leq \beta^2 L^{2(\beta-1)} (x^2+2L^2)
\leq 3\beta^2 L^{2(\beta-1)} x^2, 
\end{align*}
where the first inequality comes from dropping the non-positive linear term and estimating $1\leq \beta$, 
and the final inequality is simply $L^2<x^2$.
We similarly estimate $H_{\beta,L}(x)$ from below for $x>L$ and find
\begin{equation}\label{eq:H-Manipulations}
\begin{split}
H_{\beta,L}(x)&=\left(\frac{\beta^2}{2}\left(\frac{x}{L}\right)^2-\frac{2\beta^2(\beta-1)}{2\beta-1}\left(\frac{x}{L}\right) +\frac{\beta(\beta-1)}{2}\right) L^{2\beta} \\
&\geq \left(\frac{\beta^2}{2(2\beta-1)} \left(\frac{x}{L}\right)^2 +\frac{\beta(\beta-1)}{2}\right)L^{2\beta}
\geq \frac{\beta^2}{2(2\beta-1)} x^2L^{2(\beta-1)},
\end{split}\end{equation}
where the first inequality uses $-\frac{x}{L} \geq -\frac{x^2}{L^2}$ and the second inequality comes from dropping the non-negative constant term. Using these two estimates one readily sees that 
\[12\beta H_{\beta,L}(x)\geq \frac{2\beta}{2\beta-1} 3\beta^2 x^2 L^{2(\beta-1)} \geq 3\beta^2 x^2 L^{2(\beta-1)} \geq \phi_{\beta,L}(x)^2,\]
showing half of the claim in \eqref{eq:H-Comparison}. To see the other half, first observe that (for $x>L$), we have $xG_{\beta,L}(x)\leq \beta^2 x^2 L^{2(\beta-1)}$ by dropping the non-positive term in \eqref{G-def}. Using \eqref{eq:H-Manipulations} again we deduce
\[4\beta H_{\beta,L}(x)\geq \frac{2\beta}{2\beta-1} \beta^2 x^2 L^{2(\beta-1)}\geq \beta^2x^2L^{2(\beta-1)} \geq xG_{\beta,L}(x).\]
This finishes the proof of \eqref{eq:H-Comparison}, so we arrive by overestimating the right hand side of \eqref{H-estimate2} at
the following inequality
\begin{equation}\label{eq:HEvolStep}
\begin{split}
\partial_t \norm{H_{\beta,L}(S)}_{L^1(M,g)}  &\leq C_T \norm{H_{\beta,L}(S)}_{L^1(M,g)} 
\\ &+ \norm{Sf_{\beta,L}(S)}_{L^1(M,g)} -\frac{(n-1)}{A(T)} \norm{\phi_{\beta,L}(S)}_{L^{\frac{2n}{n-2}}(M,g)}^2,
\end{split}\end{equation} 
where the uniform constant $C_T>0$ is explicitly given by 
$$
C_T := 12(n-1)\beta \, \frac{A(T)}{B(T)} + \rho(0)\left(\frac{n}{2}+4\beta\right).
$$ 
Introduce the non-negative, real function $F_{\beta,L}$ via 
$$
F_{\beta,L}(x) := \left( xf_{\beta,L}(x) \right)^{\frac{1}{2\beta+1}}.
$$
Assume $\beta>\frac{n}{4}$, which holds e.g. for $\beta=\frac{n^2}{4(n-2)}$. 
Set $\alpha:= \frac{n}{4\beta}<1$ and choose any $\delta>0$. Observe that by the 
H\"older inequality in the first estimate and the Young inequality in the second, we obtain
\begin{equation}\label{eq:HEvolStep2}
\begin{split}
&\norm{F_{\beta,L}(S)^{2\beta+1}}_{L^1(M,g)} \\
 &\leq \norm{\frac{}{}F_{\beta,L}(S)}^{2\alpha \beta}_{L^{\frac{2n\beta}{n-2}}(M,g)} 
 \norm{ \frac{}{}F_{\beta,L}(S)}^{1+2(1-\alpha)\beta}_{L^{2\beta}(M,g)}  \\
 &\leq \delta \alpha \norm{\frac{}{}F_{\beta,L}(S)^\beta}^2_{L^{\frac{2n}{n-2}}(M,g)} 
 +\delta^{-\frac{\alpha}{1-\alpha}}(1-\alpha) \norm{\frac{}{}F_{\beta,L}(S)}^{\frac{1}{1-\alpha}+2\beta}_{L^{2\beta}(M,g)}.
\end{split}\end{equation} 
 These norms are finite for finite $L>0$, as one can see as follows: the claim is clear for $S \leq L$ and the 
 delicate point is the behaviour of the function for $S$ large. For $S>L$, $F_{\beta,L}(S)\sim S^{\frac{2}{2\beta+1}}$, and (since $\frac{2\beta}{2\beta+1}\leq 1$) the terms  $\norm{F_{\beta,L}(S)^\beta}_{L^{\frac{2n}{n-2}}(M,g)}$ and $\norm{F_{\beta,L}(S)}_{L^{2\beta}(M,g)}$ can be controlled via $\norm{S}_{L^{\frac{2n}{n-2}}(M,g)}$ and $\norm{S}_{L^2(M,g)}$ respectively. These latter norms are bounded\footnote{This is the point where 
the estimate $xG_{\beta,L}-\frac{n}{2}H_{\beta,L} \leq f_{\beta,L}$ was necessary. Otherwise, defining 
$F_{\beta,L}$ in terms of $xG_{\beta,L}-\frac{n}{2}H_{\beta,L}$ would have caused $F_{\beta,L}(S)$ to go as $S^{\frac{3}{2\beta+1}}$ for large $L$ and we would not be able to guarantee that $\norm{F_{\beta,L}(S)^\beta}^2_{L^{\frac{2n}{n-2}}(M,g)}$ is finite.} because of $S\in C^0([0,T];H^1(M,g))$ and \eqref{eq:gSobolev}.
\medskip
 
We can compare $\norm{F_{\beta,L}(S)^\beta}^2_{L^{\frac{2n}{n-2}}(M,g)}$ and
$\norm{\phi_{\beta,L}(S)}_{L^{\frac{2n}{n-2}}(M,g)}^2$ since we have the following pointwise estimates.  
Directly from the definition we have
\begin{align*}
F_{\beta,L}(x)^\beta=\begin{dcases} \beta^{\frac{\beta}{2\beta+1}} x^\beta \leq
\beta x^\beta, & x\leq L, \\ (n\beta^2)^{\frac{\beta}{2\beta+1}} L^\beta \left(\frac{x}{L}\right)^{\frac{2\beta}{2\beta+1}}
\leq n\beta L^{\beta-1} x, & x>L.\end{dcases}
\end{align*}
Similarly, we may estimate $\phi_{\beta,L}$ from below
\begin{align*}
\phi_{\beta,L}(x)&=\begin{dcases} x^{\beta} = x^{\beta}, & x\leq L, \\ 
\beta L^{\beta-1} x -(\beta-1)L^{\beta} \geq xL^{\beta-1}, & x\leq L. \end{dcases}
\end{align*}
so combined we find $n\beta \phi_{\beta,L}(x)\geq F_{\beta,L}(x)^\beta$.
By sufficiently shrinking $\delta>0$ (choosing $\delta\leq \frac{4(n-1)}{n^3\beta A(T)}$ to be precise), we can thus ensure for all $L>0$
$$
\delta \alpha \norm{F_{\beta,L}(S)^\beta}^2_{L^{\frac{2n}{n-2}}(M,g)}\leq 
\frac{(n-1)}{A(T)} \norm{\phi_{\beta,L}(S)}_{L^{\frac{2n}{n-2}}(M,g)}^2,
$$ 
and therefore deduce from \eqref{eq:HEvolStep} and \eqref{eq:HEvolStep2}
\begin{equation}
\label{eq:OnlyFLeft}
\partial_t \norm{H_{\beta,L}(S)}_{L^1(M,g)}  \leq 
C_T \norm{H_{\beta,L}(S)}_{L^1(M,g)}+
C'_T \norm{F_{\beta,L}(S)}^{\frac{1}{1-\alpha}+2\beta}_{L^{2\beta}(M,g)},
\end{equation}
for uniform constants $C_T, C'_T>0$ where $C_T$ is given above and
\[C'_T:= \delta^{-\frac{n}{4\beta-n}}\left(\frac{4\beta-n}{4\beta}\right)\]
and $\delta\leq \frac{4(n-1)}{n^3\beta A(T)}$. The point is that both depend only on $T>0$ and $S_0$.

\noindent We then compare $F_{\beta,L}(x)^{2\beta}$ to $H_{\beta,L}(x)$ as follows. 
From the definition of $F_{\beta,L}(x)$ again we find
\begin{align*}
F_{\beta,L}(x)^{2\beta}&=\begin{dcases} \beta^{\frac{2\beta}{2\beta+1}} x^{2\beta} 
\leq \beta x^{2\beta}, & x\leq L, \\ (n\beta^2)^{\frac{2\beta}{2\beta+1}} L^{2\beta} \left(\frac{x}{L}\right)^{\frac{4\beta}{2\beta+1}}
\leq n\beta^2 L^{2(\beta-1)} x^2, & x>L.\end{dcases}
\end{align*} 
Consulting \eqref{eq:H-Manipulations} we find
\[ 4n\beta H_{\beta,L}(x)\geq \frac{2\beta}{2\beta-1} \begin{dcases} n\beta x^{2\beta}& x\leq L \\ n\beta^2 L^{2(\beta-1)} x^2 & x>L.\end{dcases}\]
We therefore conclude $4n\beta H_{\beta,L}(x)\geq F_{\beta,L}(x)^{2\beta}$.
Defining 
\[C''_T:= \max \left\{\left(4n\beta\right)^{1+\frac{2}{4\beta-n}} C'_T,C_T\right\},\]
 we deduce from \eqref{eq:OnlyFLeft} that
\[
\partial_t \norm{H_{\beta,L}(S)}_{L^1(M,g)} \leq C''_T 
\left(1+\norm{\frac{}{}H_{\beta,L}(S)}_{L^1(M,g)}^{\frac{2}{4\beta-n}}\right)\norm{H_{\beta,L}(S)}_{L^1(M,g)}.
\]
Setting $\beta=\frac{n^2}{4(n-2)}$, we can rewrite this differential inequality as 
\[
\partial_t \log \left(\norm{H_{\beta,L}(S)}_{L^1(M,g)}\right)\leq 
C''_T \left(1 + \norm{\frac{}{}H_{\beta,L}(S)}_{L^1(M,g)}^{\frac{n-2}{n}} \right).\]
Integrating this differential inequality in time, we conclude
\begin{align*}
&\log   \left(\norm{H_{\beta,L}(S(T))}_{L^1(M,g)}\right)  \\ 
& \leq \log \left(\norm{H_{\beta,L}(S_0)}_{L^1(M,g_0)}\right) + C''_T T + 
C''_T\int_0^T  \norm{H_{\beta,L}(S)}_{L^1(M,g)}^{\frac{n-2}{n}}\, dt.
\end{align*}
Taking the limit $L\to \infty$ (using Fatou's lemma and dominated convergence as before
in the final step of the proof of Proposition \ref{Prop:LowerCurvBound2} )
and using Lemma \ref{Lem1}, we deduce
\[
\log \norm{\, S(T)^{\frac{n^2}{2(n-2)}}\, }_{L^1(M,g)} \leq 
\log\norm{\, S_0^{\frac{n^2}{2(n-2)}}\, }_{L^1(M,g)}+C''_T T + C''_TC(T),
\]
which proves the statement for $n\geq 4$. \medskip

\noindent The above proof would almost work for $n=3$. The problem is that $xG_{\beta,L}-\frac{n}{2}H_{\beta,L}\leq f_{\beta,L}$ no longer holds true, and one would have a problem showing that the norms in \eqref{eq:HEvolStep2} are finite. One solution is to redefine the approximation functions $\phi_{\beta,L}$, $G_{\beta,L}$, and $H_{\beta,L}$ to ensure $xG_{\beta,L}(x)-\frac{n}{2}H_{\beta,L}(x)$ is dominated by a function $f_{\beta,L}$ going like at most $ x$ for large $x$, rather than $x^2$. This is a non-trivial task, because it is also important for the above argument that one can find constants (depending on $n,\beta$, but not $L$) such that $C_1 H_{\beta,L}(x)\geq  \phi_{\beta,L}(x)^2$, $C_2 H_{\beta,L}(x)\geq xG_{\beta,L}(x)$, $C_3 F_{\beta,L}(x)^{\beta}\leq \phi_{\beta,L}(x)$ and $C_4 H_{\beta,L}(x)\geq  F_{\beta,L}(x)^{2\beta}$, where $F_{\beta,L}(x)=(xf_{\beta,L}(x))^{\frac{1}{2\beta+1}}$. Consider the following family of approximation functions, $\nu\leq 1$ and $\nu\notin\left\{0, \frac{1}{2}\right\}$.
\begin{equation}
\tilde{\phi}_{\beta,L}(x):= \begin{dcases} x^{\beta}, & x\leq L \\
 \frac{\beta}{\nu} L^{\beta-\nu}x^{\nu} +L^\beta\left(1-\frac{\beta}{\nu}\right), & x>L.
 \end{dcases}
 \end{equation}
\begin{equation}
\tilde{G}_{\beta,L}(x):= \int_0^x \tilde{\phi}_{\beta,L}'(y)^2\, dy=\begin{dcases} \frac{\beta^2}{2\beta-1} x^{2\beta-1},  & x\geq L,\\
\frac{\beta^2L^{2(\beta-\nu)}}{2\nu-1}x^{2\nu-1} -\frac{2\beta^2 L^{2\beta-1}(\beta-\nu)}{(2\nu-1)(2\beta-1)},  & x>L,
\end{dcases}
\end{equation}
and
\begin{align*}
\tilde{H}_{\beta,L}(x):= \int_0^x \tilde{G}_{\beta,L}(y)\, dy
=\begin{dcases} \frac{\beta}{2(2\beta-1)} x^{2\beta}, \! & \! x\leq L,\\ 
\frac{\beta^2L^{2(\beta-\nu)}}{2\nu(2\nu-1)}x^{2\nu} -\frac{2\beta^2 L^{2\beta-1} 
(\beta-\nu)}{(2\nu-1)(2\beta-1)} x -C_{\beta,\nu}L^{2\beta}, \! & \! x>L,
\end{dcases}
\end{align*}
where 
\[
C_{\beta,\nu}:= \frac{\beta\left(\beta(2\beta-1)+4\nu\beta(\nu-\beta)+\nu(1-2\nu) 
\right)}{2\nu(2\beta-1)(2\nu-1)}.
\]
In the $n\geq 4$ case we considered these functions with $\nu=1$.
These functions have the same qualitative properties of as before, namely that $\tilde{\phi}_{\beta,L}\xrightarrow{L\to \infty} x^\beta$ and $\tilde{\phi}_{\beta,L}\in C^1(\R_+)$ with $\tilde{\phi}_{\beta,L}'\in L^{\infty}(\R_+)$, and so on for $\tilde{G}_{\beta,L}$ and $\tilde{H}_{\beta,L}$. We can therefore use $\tilde{G}_{\beta,L}(S)$ as test function in \eqref{eq:ScalarEvol} and deduce the analogue of \eqref{eq:CurvatureEvolStep}, namely
\begin{align}
&\partial_t \int_M \tilde{H}_{\beta,L}(S)\, d\Vol_g +(n-1)\int_M \vert \nabla \tilde{\phi}_{\beta,L}(S)\vert^2\, d\Vol_g \notag \\ 
&\leq \int_M (S-\rho)\left(S\tilde{G}_{\beta,L}(S)-\frac{n}{2} \tilde{H}_{\beta,L}(S)\right)\, d\Vol_g.
\label{eq:TildeCurvatureEvolStep}
\end{align}
Consider the the expression $x\tilde{G}_{\beta,L}(x)-\frac{n}{2}\tilde{H}_{\beta,L}(x)$ for $x>L$,
\begin{align*}
&x\tilde{G}_{\beta,L}(x)-\frac{n}{2}\tilde{H}_{\beta,L}(x)\\
&=\frac{\beta^2 L^{2(\beta-\nu)}}{2\nu (2\nu-1)}\left(2\nu-\frac{n}{2}\right)x^{2\nu} +\frac{2\beta^2 (\beta-\nu)L^{2\beta-1}}{(2\beta-1)(2\nu-1)}\left(\frac{n}{2}-1\right)x+\frac{n}{2}C_{\beta,\nu}L^{2\beta}.
\end{align*}
From this one sees that when $0<\nu \leq \frac{n}{4}$ and $\beta \geq \frac{n}{4}$, the first two terms becomes negative. 
So assume from now on that $0<\nu \leq \frac{n}{4}$ and later we will make a choice of $\beta \geq \frac{n}{4}$. 
Introduce the function 
\[\tilde{f}_{\beta,L}(x):= \begin{cases} \beta x^{2\beta} & x\leq L\\ \frac{n}{2}\vert C_{\beta,\nu}\vert L^{2\beta} & x>L\end{cases},\]
which has the property that $x\tilde{G}-\frac{n}{2}H_{\beta,L}\leq \tilde{f}_{\beta,L}(x)$ for all $x\geq 0$ and $L>0$,
as long as $\beta \geq \frac{n}{4} \geq \nu$. Proceeding exactly as in the $n\geq 4$ case we deduce
\begin{equation}\label{TildeH-estimate2}
\begin{split}
&\partial_t \norm{\tilde{H}_{\beta,L}(S)}_{L^1(M,g)} \\ 
&\leq (n-1)\frac{B(T)}{A(T)} \norm{\tilde{\phi}_{\beta,L}(S)^2}_{L^1(M,g)}
-\frac{(n-1)}{A(T)} \norm{\tilde{\phi}_{\beta,L}(S)}_{L^{\frac{2n}{n-2}}(M,g)}^2 \\
&+\rho(0)\norm{S\tilde{G}_{\beta,L}(S)-\frac{n}{2}\tilde{H}_{\beta,L}(S)}_{L^1(M,g)}
+\norm{S\tilde{f}_{\beta,L}(S)}_{L^1(M,g)}.
\end{split}\end{equation}
We compare $\tilde{\phi}_{\beta,L}(x)^2$, $x\tilde{G}_{\beta,L}(x)$ and $\tilde{H}_{\beta,L}(x)$ as in 
as for \eqref{eq:H-Comparison}, and conclude by similar arguments for all $\beta\geq 1, L>0$, 
and some $L$-independent constants $C_1$, $C_2$
\begin{equation}
C_1 \tilde{H}_{\beta,L}(x)\geq \tilde{\phi}_{\beta,L}(x)^2, \quad C_2 \tilde{H}_{\beta,L}(x)\geq x\tilde{G}_{\beta,L}(x). 
\label{eq:TildeComparison}
\end{equation}
We now proceed as before, getting
\begin{equation}\label{eq:TildeHEvolStep}
\begin{split}
\partial_t \norm{\tilde{H}_{\beta,L}(S)}_{L^1(M,g)}  &\leq C_T \norm{\tilde{H}_{\beta,L}(S)}_{L^1(M,g)} 
\\ &+ \norm{S\tilde{f}_{\beta,L}(S)}_{L^1(M,g)} -\frac{(n-1)}{A(T)} \norm{\tilde{\phi}_{\beta,L}(S)}_{L^{\frac{2n}{n-2}}(M,g)}^2,
\end{split}\end{equation} 
where the uniform constant $C_T>0$ is explicitly given by 
$$
C_T := C_1(n-1) \, \frac{A(T)}{B(T)} + \rho(0)\left(\frac{n}{2}+C_2\right).
$$ 
Introduce the non-negative, real function $\tilde{F}_{\beta,L}$ via 
$$
\tilde{F}_{\beta,L}(x) := \left( x\tilde{f}_{\beta,L}(x) \right)^{\frac{1}{2\beta+1}}.
$$
Assume $\beta>\frac{n}{4}$, which holds e.g. for $\beta=\frac{n^2}{4(n-2)}$. 
Set $\alpha:= \frac{n}{4\beta}<1$ and choose any $\delta>0$. Observe that by the 
H\"older inequality in the first estimate and the Young inequality in the second, we obtain
\begin{equation}\label{eq:TildeHEvolStep2}
\begin{split}
&\norm{\tilde{F}_{\beta,L}(S)^{2\beta+1}}_{L^1(M,g)} 
 \leq \norm{\frac{}{}\tilde{F}_{\beta,L}(S)}^{2\alpha \beta}_{L^{\frac{2n\beta}{n-2}}(M,g)} 
 \norm{ \frac{}{}\tilde{F}_{\beta,L}(S)}^{1+2(1-\alpha)\beta}_{L^{2\beta}(M,g)}  \\
 &\leq \delta \alpha \norm{\frac{}{}\tilde{F}_{\beta,L}(S)^\beta}^2_{L^{\frac{2n}{n-2}}(M,g)} 
 +\delta^{-\frac{\alpha}{1-\alpha}}(1-\alpha) \norm{\frac{}{}\tilde{F}_{\beta,L}(S)}^{\frac{1}{1-\alpha}+2\beta}_{L^{2\beta}(M,g)}.
\end{split}\end{equation} 
These integrals are finite for the same reasons as in the $n\geq 4$ case. \medskip

\noindent We shall from now on set set $\nu=\frac{\beta}{2\beta+1}$ and $\beta=\frac{n^2}{4(n-2)}$, which translates into $\nu=\frac{9}{22}$ for $n=3$. Notice that this choice satisfies $v\leq \frac{n}{4}$, so the manipulations up until now are allowed. The reason for choosing this $\nu$ is that then
\begin{align*}
\tilde{F}_{\beta,L}(x)^{\beta}=\begin{dcases} \beta^{\nu} x^\beta, & x\leq L, \\ 
\left( \frac{n}{2} \, \vert C_{\beta,\nu}\vert\right)^{\nu} L^{\beta-\nu}x^{\nu}, & x>L.
\end{dcases}
\end{align*}
This is easily comparable to $\tilde{\phi}_{\beta,L}(x)$. Since 
\[\tilde{\phi}_{\beta,L}(x)\geq \frac{\beta}{\nu}L^{\beta-\nu} x^\nu\]
for $x>L$, we see that defining 
\[C_3^{-1}\coloneqq \max\left\{ \beta^{\nu}, \frac{\nu}{\beta}\left(\frac{n}{2}\vert C_{\beta,\nu}\vert\right)^{\nu}\right\}\]
we achieve\footnote{This is a somewhat delicate point. If one chooses $\nu$ small, it is easy to make $x\tilde{G}-\frac{n}{2}\tilde{H}$ sublinear, but if $\nu$ is too small, $\tilde{F}$ will increase faster than $\tilde{\phi}$, ruining the comparison. On the other hand, if $\nu$ is bigger than $\frac{n}{4}$ we see above that $x\tilde{G}-\frac{n}{2}\tilde{H}$ becomes too large to guarantee the finiteness of the integrals in \eqref{eq:TildeHEvolStep2}.}  $C_3 \tilde{F}_{\beta,L}(x)^\beta \leq \tilde{\phi}_{\beta,L}(x).$
So if we choose $\delta\leq \frac{(n-1)}{A(T)} \frac{4C_3^2 \beta}{n}$, then $\delta \alpha \norm{\frac{}{}\tilde{F}_{\beta,L}(S)^\beta}^2_{L^{\frac{2n}{n-2}}(M,g)}-\frac{(n-1)}{A(T)} \norm{\tilde{\phi}_{\beta,L}(S)}_{L^{\frac{2n}{n-2}}(M,g)}^2\leq 0$ holds for all $L>0$, and we deduce from \eqref{eq:TildeHEvolStep} and \eqref{eq:TildeHEvolStep2} that 
\begin{equation}
\label{eq:TildeOnlyFLeft}
\partial_t \norm{\tilde{H}_{\beta,L}(S)}_{L^1(M,g)}  \leq 
C_T \norm{\tilde{H}_{\beta,L}(S)}_{L^1(M,g)}+
C'_T \norm{\tilde{F}_{\beta,L}(S)}^{\frac{1}{1-\alpha}+2\beta}_{L^{2\beta}(M,g)},
\end{equation}
for uniform constants $C_T, C'_T>0$ where $C_T$ is given above and
\[C'_T:= \delta^{-\frac{n}{4\beta-n}}\left(\frac{4\beta-n}{4\beta}\right)\]
and $\delta\leq \frac{(n-1)}{A(T)} \frac{4C_3^2 \beta}{n}$. The point is that both depend only on $T>0$ and $S_0$. The final comparison we need is that $C_4 H_{\beta,L}(x)\geq  F_{\beta,L}(x)^{2\beta}$ holds for some $C_4>0$ independent of $L$, and here is a way to see that this is doable. For $x\leq L$ both functions are proportional, so there is nothing to show. Inserting $\nu=\frac{\beta}{2\beta+1}$ into the definition of $\tilde{H}_{\beta,L}(x)$ yields (for $x>L$)
\[\tilde{H}_{\beta,L}(x)=L^{2\beta}\left( \frac{4\beta^4}{2\beta-1} \left(\frac{x}{L}\right) -\frac{\beta(2\beta+1)^2}{2} \left(\frac{x}{L}\right)^{2\nu} +\beta^2\right),\]
which shows that $\tilde{H}_{\beta,L}(x)$ is dominated by a positive linear term for $x>L$, which will dominate the sublinear term $x^{2\nu}$ of $\tilde{F}_{\beta,L}(x)^{2\beta}$.
Defining 
\[C''_T:= \max \left\{C_4^{1+\frac{2}{4\beta-n}} C'_T,C_T\right\},\]
 we deduce from \eqref{eq:TildeOnlyFLeft} that
\[
\partial_t \norm{\tilde{H}_{\beta,L}(S)}_{L^1(M,g)} \leq C''_T 
\left(1+\norm{\frac{}{}\tilde{H}_{\beta,L}(S)}_{L^1(M,g)}^{\frac{2}{4\beta-n}}\right)\norm{\tilde{H}_{\beta,L}(S)}_{L^1(M,g)},
\]
The rest of the proof then runs exactly as in the $n\geq 4$ case, giving us our required bound also for $n=3$.
\end{proof}

This completes the first step on the way to Theorem \ref{Prop:CurvatureBound}, 
proving a uniform $L^{\frac{n^2}{2(n-2)}}(M,g)$-norm bound on $S$. Before we can 
go on to prove Theorem \ref{Prop:CurvatureBound} by a Moser iteration argument, 
we need the following parabolic Sobolev inequality.

\begin{Lem}
 Let $A(T)$ and $B(T)$ denote the constants of the (elliptic) Sobolev 
 inequality \eqref{eq:gSobolev}. Then for any $f\in C^0([0,T];H^1(M,g))$ we have
(writing $M_T := M \times [0,T]$\footnote{We write $L^p(M_T,g) \equiv L^p(M_T, g\oplus dt^2)$ for any $p\geq 1$.})
\begin{equation} \label{eq:ParabolicSob}
\begin{split}
\norm {f^2 }_{L^\frac{n+2}{n}(M_T,g)}& \leq
\frac{n}{n+2}\left( A(T)\norm{\nabla f}_{L^2(M_T,g)}^2+B(T) \norm{f}^2_{L^2(M_T,g)}\right) \\
 &+ \frac{2}{n+2} \sup_{t \in [0,T]} \norm{f(t)}^2_{L^2(M,g)}.
\end{split}
\end{equation}
\end{Lem}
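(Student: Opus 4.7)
The strategy is the classical interpolation between $L^2$ in time of the Sobolev exponent in space and $L^\infty$ in time of the $L^2$ norm in space, combined with the elliptic Sobolev inequality \eqref{eq:gSobolev}. At each fixed time $t\in[0,T]$, the function $f(t,\cdot)$ lies in $H^1(M,g)$, so it belongs to $L^2(M,g)\cap L^{\frac{2n}{n-2}}(M,g)$, and consequently $f(t,\cdot)^2$ lies in $L^1(M,g)\cap L^{\frac{n}{n-2}}(M,g)$. I would first apply H\"older's inequality with interpolation parameter $\theta=\frac{2}{n+2}$, which is exactly the value satisfying
\[
\frac{n}{n+2}=\theta+(1-\theta)\,\frac{n-2}{n},
\]
to obtain the pointwise-in-time estimate
\[
\norm{f(t)^2}_{L^{\frac{n+2}{n}}(M,g)}\leq \norm{f(t)}_{L^2(M,g)}^{\frac{4}{n+2}}\norm{f(t)}_{L^{\frac{2n}{n-2}}(M,g)}^{\frac{2n}{n+2}}.
\]

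Next I would raise both sides to the power $\frac{n+2}{n}$ and integrate in time over $[0,T]$, which yields
\[
\norm{f^2}_{L^{\frac{n+2}{n}}(M_T,g)}^{\frac{n+2}{n}}\leq \sup_{t\in[0,T]}\norm{f(t)}_{L^2(M,g)}^{\frac{4}{n}}\int_0^T \norm{f(t)}_{L^{\frac{2n}{n-2}}(M,g)}^{2}\,dt.
\]
Taking the $\frac{n}{n+2}$-th root and then applying Young's inequality $ab\leq \frac{a^p}{p}+\frac{b^q}{q}$ with the conjugate exponents $p=\frac{n+2}{2}$ and $q=\frac{n+2}{n}$ (so that $a^p$ recovers $\sup_t\norm{f(t)}_{L^2(M,g)}^2$ and $b^q$ recovers the time integral), I obtain
\[
\norm{f^2}_{L^{\frac{n+2}{n}}(M_T,g)}\leq \frac{2}{n+2}\sup_{t\in[0,T]}\norm{f(t)}_{L^2(M,g)}^{2}+\frac{n}{n+2}\int_0^T \norm{f(t)}_{L^{\frac{2n}{n-2}}(M,g)}^{2}\,dt.
\]

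Finally, I would apply the elliptic Sobolev inequality \eqref{eq:gSobolev} slicewise in time,
\[
\norm{f(t)}_{L^{\frac{2n}{n-2}}(M,g)}^{2}\leq A(T)\norm{\nabla f(t)}_{L^2(M,g)}^{2}+B(T)\norm{f(t)}_{L^2(M,g)}^{2},
\]
and integrate this in $t$. Substituting the result into the previous display produces exactly \eqref{eq:ParabolicSob}. There is no real obstacle here, since the constants in \eqref{eq:gSobolev} are already uniform in $t$ by Corollary \ref{Cor:Sob}; the only point that requires mild care is choosing the Young exponents so that both the supremum term and the gradient term appear with the sharp coefficients $\frac{2}{n+2}$ and $\frac{n}{n+2}$ claimed in the statement.
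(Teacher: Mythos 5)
Your proposal is correct and follows essentially the same route as the paper: a slicewise H\"older interpolation of $f^2$ between $L^1(M,g)$ and $L^{\frac{n}{n-2}}(M,g)$, the elliptic Sobolev inequality \eqref{eq:gSobolev} applied at each time, and Young's inequality with exponents $\frac{n+2}{2}$ and $\frac{n+2}{n}$ to split off the supremum term. The only difference is the (immaterial) order in which Sobolev and Young are applied.
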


\begin{proof}
The statement and the proof are close to \cite[Eqn. 12]{MCZ12}. We compute
\begin{align*} 
\int_0^T \int_M f^{\frac{2(n+2)}{n}}\, d\Vol_g\, dt&=\int_0^T \int_M f^2 f^{\frac{4}{n}} \, d\Vol_g \, dt \\ 
&\leq \int_0^T \left( \norm{f}_{L^{\frac{2n}{n-2}}(M,g)}^2 \norm{f}^{\frac{4}{n}}_{L^2(M,g)}\right)\, dt \\ 
&\leq \int_0^T \left(A(T)\norm{\nabla f}^2_{L^2(M,g)}+B(T)\norm{f}^2_{L^2(M,g)}\right)
\left(  \norm{f}^{\frac{4}{n}}_{L^2(M,g)}\right) \, dt \\ 
& \leq  \left(A(T)\norm{\nabla f}^2_{L^2(M_T,g)}+B(T)\norm{f}^2_{L^2(M_T,g)}\right)
\sup_{t \in [0,T]} \left(  \norm{f}^{\frac{4}{n}}_{L^2(M,g)}\right).
\end{align*}
where in the first estimate we applied the H\"older inequality with $p=n/2$ and $q=n/(n-2)$,
and in the second estimate applied \eqref{eq:gSobolev}.
Raising both sides of the inequality to $\frac{n}{n+2}$ and using 
Young's inequality $AB\leq \frac{A^p}{p}+\frac{B^q}{q}$ with $p=\frac{n+2}{n}$ and $q=\frac{n+2}{2}$ we 
arrive at the estimate as claimed.
\end{proof} \ \\[-6mm]

\noindent We are now ready to tackle Theorem \ref{Prop:CurvatureBound}.

\begin{proof}[Proof of Theorem \ref{Prop:CurvatureBound}]
Since we assume that $(S_0)_- \in L^\infty(M)$, we have uniform bounds on $S_-$ by 
Proposition \ref{Prop:LowerCurvBound2}. Thus, it suffices to prove the statement for $S_+$.
Therefore, we may replace $S$ by $S_+$, replacing the evolution equation \eqref{eq:ScalarEvol} for $S$  
by the inequality \eqref{eq:S+Evol} for $S_+$. Hence we continue under the assumption
$S\equiv S_+ \geq 0$, subject to \eqref{eq:S+Evol}.  \medskip

Let $\eta \in C^1([0,T],\R_+)$ be non-decreasing with $\eta(0)=0$ and $\| \eta \|_\infty \leq 1$. 
We want to use $\beta^2\eta^2 S^{2\beta-1}/ (2\beta -1)$ (with $\beta>1$) as a test function in the weak formulation of \eqref{eq:S+Evol}. The problem is of course that the chain rule fails to hold in general, 
so we use the same workaround as in Proposition \ref{Prop:LowerCurvBound2} and 
Corollary \ref{Lem:UnifCurvBound}. Let $L>0$ and define $\phi_{\beta,L},G_{\beta,L},$ and 
$H_{\beta,L}$ as before. Using $\eta(s)^2 G_{\beta,L}(S)$ as test function in \eqref{eq:S+Evol} we get
\[
\int_M (\partial_s S)\eta^2 G_{\beta,L}(S)\, d\Vol_g +
(n-1)\int_M \eta^2 \vert \nabla \phi_{\beta,L}(S)\vert^2\, d\Vol_g \leq \int_M S G_{\beta,L}(S) \vert S-\rho\vert\, d\Vol_g.
\]
On the right hand side we observe (by a direct computation) that $SG_{\beta,L}(S)\leq \frac{\beta^2}{2\beta-1}\phi_{\beta,L}(S)^2$.  We integrate this  in time for any $t \in [0,T]$
\begin{equation}\label{auxiliary3}
\begin{split}
\int_0^t \int_M (\partial_s S)\eta^2 G_{\beta,L}(S)\, d\Vol_g\, ds &+(n-1)\int_0^t\int_M \eta^2 \vert \nabla \phi_{\beta,L}(S)\vert^2\, d\Vol_g\, ds \\ & \leq \frac{\beta^2}{2\beta-1}\int_0^t \int_M \eta^2  \phi_{\beta,L}(S)^2 \vert S-\rho\vert\, d\Vol_g\, ds.
\end{split}\end{equation}
We rewrite the first term on the left hand side of \eqref{auxiliary3} using \eqref{eq:DVol} 
\begin{align*}
 \int_0^t \int_M \eta^2 (\partial_s S) &G_{\beta,L}(S)\, d\Vol_g\, ds
\equiv \int_0^t \int_M \eta^2 \partial_s H_{\beta,L}(S) \, d\Vol_g\, ds  \\  
&= \int_M \eta^2 H_{\beta,L}(S)\, d\Vol_g (s=t) - 2 \int_0^t\int_M \eta \, \dot{\eta} \, H_{\beta,L}(S) \, d\Vol_g\, ds
\\ &+\frac{n}{2} \int_0^t \int_M \eta^2 H_{\beta,L}(S) (S-\rho)\, d\Vol_g\, ds,
\end{align*}
 where we write $\dot{\eta} \equiv \partial_s \eta$ and used $\eta(0)=0$. 
 Plugging this into \eqref{auxiliary3}, we obtain
 \begin{align*}
 \int_M \eta^2 H_{\beta,L}(S)\, &d\Vol_g (s=t) +
 (n-1)\int_0^t\int_M \eta^2 \vert \nabla \phi_{\beta,L}(S)\vert^2\, d\Vol_g\, ds  \\
 & \leq \int_0^t \int_M \eta^2 \left(  \frac{\beta^2}{2\beta-1}\phi_{\beta,L}(S)^2 +\frac{n}{2}H_{\beta_L}(S)\right)\vert S-\rho\vert\, d\Vol_g\, ds \\ &+ 2 \int_0^t\int_M \eta \, \dot{\eta} \, H_{\beta,L}(S) \, d\Vol_g\, ds.
 \end{align*} 
We now take the supremum over $t\in [0,T]$ and appeal to the parabolic Sobolev inequality 
\eqref{eq:ParabolicSob} with $f=\eta \phi_{\beta,L}(S)$ . The result is
\begin{equation}\label{auxiliary4} 
\begin{split} 
 & \frac{(n-1)}{nA(T)} \left( (n+2) \norm{\eta^2 \phi_{\beta,L}(S)^2}_{L^{\frac{n+2}{n}}(M_T,g)} 
 - 2 \sup_{t \in [0,T]}\norm{\eta \phi_{\beta,L}(S)}^2_{L^2(M,g)} \right. \\  & \left. -
 n B(T) \norm{\eta \phi_{\beta,L}(S)}_{L^2(M_T,g)}^2 \right)
  +\sup_{t \in [0,T]} \int_M \eta^2 H_{\beta,L}(S)\, d\Vol_g 
\\ &\leq \int_0^T \int_M \eta^2 \left( \frac{\beta^2}{2\beta-1}\phi_{\beta,L}(S)^2 
 +\frac{n}{2}H_{\beta_L}(S)\right)\vert S-\rho\vert\, d\Vol_g\, dt
 \\ & + 2 \int_0^T\int_M \eta \, \dot{\eta} \, H_{\beta,L}(S) \, d\Vol_g\, dt.
\end{split} 
\end{equation}
 By increasing $A(T)>0$ if needed, we may assume (also noting that $H_{\beta,L}$
 and $\phi^2_{\beta,L}$ are comparable by \eqref{eq:H-Comparison})
 \[
 \sup_{t \in [0,T]}\int_M \eta^2 H_{\beta,L}(S)\, d\Vol_g  -
 \frac{2(n-1)}{nA(T)} \sup_{t \in [0,T]} \norm{\eta \phi_{\beta,L}(S)}^2_{L^2(M,g) } \geq 0
 \]
for all $\beta\geq 1$ and $L>0$. We may therefore drop these terms from \eqref{auxiliary4}.
Taking the limit $L\to \infty$ (using Fatou's lemma and the dominated convergence theorem) we get
 \begin{align*}
 & (n-1)\frac{n+2}{nA(T)} \norm{\eta^2 S^{2\beta}}_{L^{\frac{n+2}{n}}(M_T)}  -\frac{B(T)(n-1)}{A(T)}\norm{\eta S^{\beta}}_{L^2(M_T,g)}^2   \\
 &\leq \left( \frac{\beta^2}{2\beta-1}+\frac{\beta}{4n(2\beta-1)}\right) \int_0^T \int_M \eta^2 S^{2\beta}\vert S-\rho\vert\, d\Vol_g\, dt
 \\ & +\frac{\beta}{2\beta-1} \int_0^T\int_M \eta \dot{\eta} S^{2\beta} \, d\Vol_g\, dt.
 \end{align*} 
 Introducing $C:= \frac{nA(T)}{(n+2)(n-1)}$ we get from this inequality for any $\beta>1$
 \begin{align*}
\norm{\eta^2 S^{2\beta}}_{L^{\frac{n+2}{n}}(M_T)} 
&\leq \frac{nB(T)}{n+2} \norm{\eta S^{\beta}}^2_{L^2(M_T)} +C \int_0^T\int_M \eta \dot{\eta} S^{2\beta} \, d\Vol_g\, dt
\\ &+2 C\beta \int_0^T \int_M \eta^2 S^{2\beta}\vert S-\rho\vert\, d\Vol_g\, dt.
 \end{align*}
We apply the Hölder inequality with $p=\frac{n^2}{2(n-2)}$ to the last integral on the 
right hand side of the last inequality. Using Corollary \ref{Lem:UnifCurvBound} to get a bound on 
the integral of $\vert S-\rho\vert^p$, we conclude
\begin{equation} \label{eq:MoserStart}
\begin{split}
\norm{\eta^2 S^{2\beta}}_{L^{\frac{n+2}{n}}(M_T)} 
&\leq \frac{nB(T)}{n+2} \norm{\eta S^{\beta}}^2_{L^2(M_T)}  \\ &+C \int_0^T\int_M \eta \dot{\eta} S^{2\beta} \, d\Vol_g\, dt+ C(T)\beta \norm{ \eta^2 S^{2\beta}}_{L^{N}(M_T)},
\end{split} \end{equation}
with $N:= \frac{p}{p-1}=\frac{n^2}{n^2-2n+4}<\frac{n+2}{n}$. This is almost the expression we want to iterate, but the presence of $\dot{\eta}$ means we have to shrink our time interval in the iteration (as is standard for parabolic Moser iteration). Here are the details (inspired by \cite[pp. 889-890]{MCZ12}). \medskip

\noindent Consider the sequence $t_k:= \left(\frac{1}{2}-\frac{1}{2^k}\right)T$ for integers $k\geq 1$.  Let $M_k:= M\times [t_k,T]$,  $M_1=M_T$ and $M_\infty =M\times \left [\frac{T}{2},T\right]$. Choose test functions 
$\eta_k \in C^1([0,T],\R_+)$, non-decreasing with $\| \eta_k \|_\infty \leq 1$, such that
\[
\eta_k(t)=\begin{cases} 0, & t\leq t_{k-1}, \\ 1, & t\geq t_k.\end{cases}
\]
The choice of $\{\eta_k\}_k$ can be made subject to a bound on the derivative $0\leq \dot{\eta}_k \leq 2^{k+1}/T$,
which we henceforth assume. Using these functions in \eqref{eq:MoserStart}, we find
\begin{equation} 
\label{eq:MoserStep}
\begin{split}
&\norm{ S^{2\beta}}_{L^{\frac{n+2}{n}}(M_k)}= \norm{\eta_k^2 S^{2\beta}}_{L^{\frac{n+2}{n}}(M_k)}
\leq \norm{\eta_k^2 S^{2\beta}}_{L^{\frac{n+2}{n}}(M_T)} \\ 
&\leq \frac{nB(T)}{n+2} \norm{\eta_k S^{\beta}}^2_{L^2(M_T)} 
+C \int_0^T\int_M \eta_k \dot{\eta}_k S^{2\beta} \, d\Vol_g\, dt+ C(T)\beta \norm{ \eta_k^2 S^{2\beta}}_{L^{N}(M_T)} \\
&\leq\tilde{C}(T) \beta 2^{k+1} \norm{S^{2\beta}}_{L^N(M_{k-1})},
\end{split}\end{equation}
where the second inequality uses \eqref{eq:MoserStart}
and last step uses $\dot{\eta}\leq 2^{k+1}/T$ together with the H\"older inequality to compare
$L^1$ and $L^N$ norms. This is the equation we will be iterating. Introduce $\gamma:= 2\beta N$ and $\rho:= \frac{n+2}{nN}=\frac{n^3+8}{n^3}>1$. Then \eqref{eq:MoserStep} reads
\[\norm{S}_{L^{\rho \gamma}(M_k)}\leq \left(\tilde{C}(T) \gamma 2^k\right)^{\frac{N}{\gamma}} \norm{S}_{L^{\gamma}(M_{k-1})}.\]
Replacing $\gamma$ by $\rho^m\gamma$ for $m\geq 0$ results in
\[\norm{S}_{L^{\rho^{m+1} \gamma}(M_{k+m})}\leq \left(\tilde{C}(T) \rho^m\gamma 2^{k+m}\right)^{\frac{N}{\rho^m\gamma}} \norm{S}_{L^{\rho^m\gamma}(M_{k+m-1})},\]
which can be iterated down to
\[\norm{S}_{L^{\rho^{m+1} \gamma}(M_{k+m})}\leq \prod_{i=0}^m \left(\tilde{C}(T) \rho^i \gamma 2^{k+i}\right)^{\frac{N}{\rho^i \gamma}} \norm{S}_{L^{\gamma}(M_{k-1})}.\]
The expression $\prod_{i=0}^m \left(\tilde{C}(T) \rho^i \gamma 2^{k+i}\right)^{\frac{N}{\rho^i \gamma}}$ converges as $m\to \infty$, as one checks by computing the logarithm
\begin{align*}
&\lim_{m\to \infty}\log \prod_{i=0}^m \left(\tilde{C}(T) \rho^i \gamma 2^{k+i}\right)^{\frac{N}{\rho^i \gamma}}=\frac{N}{\gamma} \sum_{i=0}^\infty \left(\log(2^k \tilde{C}(T) \gamma) \frac{1}{\rho^i} +\log(2\rho) \frac{i}{\rho^i} \right)\\
&=\frac{N}{\gamma}\left(\frac{\rho}{\rho-1} \log(\tilde{C}(T)\gamma 2^k) +\log(2\rho)\frac{\rho}{(\rho-1)^2}\right).
\end{align*}
We therefore deduce for some uniform constant $C_T>0$
\[ \norm{S}_{L^{\infty}\left(M\times\left[\frac{T}{2},T\right] \right)} \leq \lim_{m\to \infty} \norm{S}_{L^{\rho^{m+1} \gamma}(M_{k+m})}\leq C_T \norm{S}_{L^{\gamma}(M_{k-1})} \leq C_T \norm{S}_{L^{\gamma}(M_T)}.\]
Choosing $\beta=\frac{n^2-2n+4}{4(n-2)}\iff \gamma =\frac{n^2}{2(n-2)}$, we can estimate the right hand side using Corollary \ref{Lem:UnifCurvBound} and deduce for some uniform constant $C(T)>0$
\[ \norm{S}_{L^{\infty}\left(M\times\left[\frac{T}{2},T\right] \right)}\leq C(T).\]

\end{proof}

\begin{Rem}
It is worth pointing out that we do not assume $S_0\in L^{\infty}(M)$, only that $S_0\in L^{\frac{n^2}{2(n-2)}}(M)$. The above proof tells us that $S\in L^{\infty}(M)$ for positive times, even if the initial curvature is unbounded. This is analogous to the well known behaviour of the heat equation, where the solutions for positive times are often much more regular than the initial data. 
\end{Rem}

%%%%%%%%%%%%%%%%%%%%%%%%%%%%%%%%%
\section{Long-time existence of the normalized Yamabe flow}\label{long-section}
%%%%%%%%%%%%%%%%%%%%%%%%%%%%%%%%%

\noindent We can now establish our main Theorem \ref{main-thm}, which explicitly reads as follows.

\begin{Thm}\label{main-thm2}
Let $(M,g_0)$ be a Riemannian manifold of dimension $n= \dim M \geq 3$, 
such that the following four assumptions hold:
\begin{enumerate}
\item The Yamabe constant $Y(M,g_0)$ is positive, i.e. Assumption \ref{Y-assump} holds;
\item $(M,g_0)$ is admissible, i.e. Assumption \ref{admissible-assump} holds;
\item Parabolic Schauder estimates hold on $(M,g_0)$, i.e. Assumption \ref{Schauder-assump} holds; 
\item $S_0 \in C^{1,\alpha}(M)$, i.e. Assumption \ref{S-bound} holds. \\
Moreover we require $S_0\in L^{\frac{n^2}{2(n-2)}}(M)$, and its negative part $(S_0)_- \in L^{\infty}(M)$.
\end{enumerate}
Under these assumptions, the normalized Yamabe flow $u^{\frac{4}{n-2}}g_0$ exists with 
$u \in C^{3,\alpha}(M\times [0,\infty))$ with infinite existence time and scalar curvature $S(t) \in L^\infty(M)$ 
for all $t>0$.
\end{Thm}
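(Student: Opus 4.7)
The plan is to carry out the standard continuation argument: combine the short-time existence result from Theorem \ref{short} with the a priori bounds established in Sections \ref{lower-section}--\ref{upper-section} to preclude blow-up at any finite time. First, Theorem \ref{short} produces a solution $u \in C^{3,\alpha}(M\times [0,T_0])$ for some $T_0 > 0$, with $u, u^{-1} \in L^\infty$ and $S \in C^{1,\alpha}$. I would define $T_{\max}$ as the supremum of times for which such a $C^{3,\alpha}$ solution exists with $u$ bounded away from zero, and argue by contradiction that $T_{\max} < \infty$ is impossible.

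On the interval $[0, T_{\max})$ I would apply the four a priori estimates in sequence. Proposition \ref{Prop:LowerCurvBound2}, using $(S_0)_- \in L^\infty$, gives $\|S_-\|_{L^\infty(M\times[0,T_{\max}))} \leq \|(S_0)_-\|_{L^\infty}$. Proposition \ref{Prop:Upper-u-Bound} then bounds $u$ from above. The hypothesis $S_0 \in L^{n^2/(2(n-2))}(M)$ with $n^2/(2(n-2)) > n/2$ feeds into Theorem \ref{Prop:Lower-u-Bound} to yield a uniform positive lower bound on $u$. Finally, Theorem \ref{Prop:CurvatureBound} gives $\|S\|_{L^\infty(M \times [T_{\max}/2, T_{\max}))} < \infty$. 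All constants depend only on $T_{\max}$ and $S_0$.

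Next I would restart the flow from some $t_0 \in (T_{\max}/2, T_{\max})$, treating $g(t_0) = u(t_0)^{4/(n-2)} g_0$ as new initial data. This metric inherits all four hypotheses of the theorem: the Yamabe constant is conformally invariant; admissibility survives a pointwise-bounded conformal change thanks to the $L^\infty$ bounds on $u$ and $u^{-1}$; the Banach spaces underlying the Schauder estimates are equivalent for $g(t_0)$ and $g_0$; and Theorem \ref{short}(2) produces $S(t_0) \in C^{1,\alpha}(M)$. Applying Theorem \ref{short} to the initial metric $g(t_0)$ then gives a continuation of the flow to $[t_0, t_0 + \tau]$ for some $\tau > 0$.

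The main obstacle is to ensure that $\tau$ can be chosen uniformly in $t_0$, so that for $t_0$ sufficiently close to $T_{\max}$ one has $t_0 + \tau > T_{\max}$, contradicting maximality. This reduces to obtaining uniform $C^{1,\alpha}$ control on $S(t_0)$ and uniform $C^{2,\alpha}$ control on $u(t_0)$ as $t_0 \to T_{\max}$. The requisite bound comes from bootstrapping the uniform $L^\infty$ estimates through the mapping properties \eqref{mapping-heat}, \eqref{Q} and \eqref{R} of the heat operator and solution operators in Definition \ref{Schauder-def}, together with the algebraic and regularity properties \eqref{quadratic} and \eqref{DT}: these promote $L^\infty$ control on $u, u^{-1}, S$ to uniform Hölder control, with constants depending only on $T_{\max}$ and $S_0$. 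With this uniform bound the contradiction argument closes, yielding $T_{\max} = \infty$ and the claimed $u \in C^{3,\alpha}(M\times [0,\infty))$ with $S(t) \in L^\infty(M)$ for every $t > 0$ by Theorem \ref{Prop:CurvatureBound} applied on each finite interval.
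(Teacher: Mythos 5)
Your overall skeleton (continuation argument plus the a priori bounds of Sections \ref{lower-section}--\ref{upper-section}) matches the paper, and your sequencing of Proposition \ref{Prop:LowerCurvBound2}, Proposition \ref{Prop:Upper-u-Bound}, Theorem \ref{Prop:Lower-u-Bound}, Corollary \ref{Cor:Sob} and Theorem \ref{Prop:CurvatureBound} is exactly right. The problem is the restart mechanism. You propose to take $g(t_0)=u(t_0)^{4/(n-2)}g_0$ as a \emph{new background metric} and assert that it ``inherits all four hypotheses,'' in particular that ``the Banach spaces underlying the Schauder estimates are equivalent for $g(t_0)$ and $g_0$.'' This is not justified and is the weak point of your route. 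Assumption \ref{Schauder-assump} is not a statement about function spaces alone: it packages mapping properties of the heat operator $e^{t\Delta}$ and of the solution operators $Q$, $R$ for the \emph{specific} Laplacian of the background metric, together with a weak maximum principle. Knowing only $u(t_0),u(t_0)^{-1}\in L^\infty$ (or even $u(t_0)\in C^{2,\alpha}$) gives you no control over the heat kernel of $\Delta_{g(t_0)}$ on the H\"older scale; establishing \eqref{mapping-heat}--\eqref{R} for a new metric is precisely the hard microlocal input (cf.\ \cite{MazVer}) that the axiomatization is designed to quarantine. The paper avoids this entirely by never changing the background: it keeps $g_0$ and $\Delta_0$ fixed throughout, and restarts by writing $u=f+e^{t\Delta_0}u(T)$ and running a contraction argument for $f$ in $C^{3,\alpha}$, using \eqref{mapping-heat2} to handle the non-constant initial value $u(T)$.

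The second issue is that your ``bootstrap'' of the $L^\infty$ bounds to uniform H\"older control is only gestured at, and it contains a step you cannot skip. The paper's argument is: (i) from $u,\partial_t u,\rho\in L^\infty(M_T)$ and the Duhamel representation of $v=u-1$, the mapping property $e^{t\Delta_0}:L^\infty\to C^{1,\alpha}$ gives $v\in C^{1,\alpha}(M\times[0,T])$ with control up to $t=T$; (ii) one then builds an \emph{auxiliary} solution $u'\in C^{2,\alpha}(M\times[0,T])$ of the quasilinear equation \eqref{flow-3} via $Q$ and $R$, treating the right-hand side as a fixed $C^{0,\alpha}$ datum; (iii) one identifies $u\equiv u'$ by the weak maximum principle \eqref{weak-max} applied to $w=u-u'$. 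Step (iii) is what transfers the improved regularity to the \emph{given} solution; without it the construction of $u'$ tells you nothing about $u$, and the H\"older norms of $u$ could a priori still degenerate as $t\to T$. Once $u\in C^{2,\alpha}(M\times[0,T])$ is known, Theorem \ref{short}(2) upgrades to $C^{3,\alpha}$ and the restart can be performed at $t=T$ itself, so your uniform-$\tau$ device for $t_0<T_{\max}$ becomes unnecessary. I recommend you restructure the end of your argument along these lines rather than via a conformal change of the background metric.
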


\begin{proof}
Short time existence of the flow with $u \in C^{3,\alpha}(M\times [0,T'])$ for some small $T'>0$ is due to Theorem \ref{short}.
Let $T>0$ be the maximal existence time, so that $u \in C^{3,\alpha}(M\times [0,T))$ without locally uniform 
control of the H\"older norms in $[0,T)$, but with no uniform control of the norms up to $t=T$. 
If $T=\infty$, there is nothing to prove. Otherwise, we proceed as follows. \medskip

Proposition \ref{Prop:LowerCurvBound2} yields a uniform (i.e. depending only on $S_0$ and the finite $T$)
lower bound on the scalar curvature $S$. 
Proposition \ref{Prop:Upper-u-Bound} and Theorem \ref{Prop:Lower-u-Bound} 
yield uniform upper and lower bounds on the solution $u$, so that $u \in L^\infty(M_T)$.
This in turn gives us bounds on the Sobolev constants 
$A(T),B(T)$ (Corollary \ref{Cor:Sob}), so we use Theorem \ref{Prop:CurvatureBound} to argue that 
$S\in L^{\infty}(M_T)$. By the evolution equation 
\[\partial_t u=-\frac{4}{n-2}(S-\rho)u,\]
we deduce $\partial_t u \in L^\infty(M_T)$. Then, arguing exactly as in \cite[Proposition 2.8]{LongTime}, 
we may then restart the flow and extend the solution past $T$. For the purpose of self-containment, 
we provide the argument here. \medskip

\noindent Let us consider the linearized equation \eqref{lin} with $u=1+v$
\begin{equation}
\partial_t v -(n-1) \Delta_0 v = - \frac{n-2}{4} S_0 + \Phi (v), \quad v(0)=0,
\end{equation}
where $\Phi(v) \in L^\infty(M_T)$, since $u,\partial_t u,\rho \in L^\infty(M_T)$.
By the third mapping property in \eqref{mapping-heat}, we conclude that
$v \in C^{1,\alpha}(M\times [0,T])$\footnote{Note that we have 
uniform control of the $C^{1,\alpha}$ norm up to $t=T$ now.}.
Rewrite flow equation \eqref{eq:YamabeFlow} using $N=\frac{n+2}{n-2}$ as follows
\begin{align}\label{flow-3}
\partial_t u - (n-1) u^{1-N} \Delta_0 u = \frac{n-2}{4} \left( \rho \, u - S_0 u^{2-N}\right). 
\end{align}
Treat the right hand side of this equation as a fixed element of $C^{0,\alpha}(M\times [0,T])$.
Since  $u^{1-N} \in C^{1,\alpha}(M\times [0,T])$ is positive and uniformly 
bounded away from zero, we may apply \eqref{Q} and \eqref{R} to 
obtain a solution $u' \in C^{2,\alpha}(M\times [0,T])$ with initial condition $u'(0)=1$.
\medskip

Note that $w\coloneqq u-u'$ solves $\partial_t w - (n-1) u^{1-N} \Delta_0 w = 0$ with zero initial condition.
By the weak maximum principle \eqref{weak-max}, $\partial_t w_{\max} \leq 0$ and $\partial_t w_{\min} \geq 0$. Due to the 
initial condition $w(0)=0$, we deduce $w\equiv 0$ and hence $u=u' \in C^{2,\alpha}(M \times [0,T])$. 
Thus $u' \in C^{2,\alpha}(M\times [0,T])$ extends $u(t)$ up to $t=T$, and we conclude
$$
u \in C^{2,\alpha}(M\times [0,T]).
$$
By the second statement of Theorem \ref{short}, we even have $u \in C^{3,\alpha}(M\times [0,T])$ 
and can now restart the flow as follows. Consider $u_0= u(T) \in C^{3,\alpha}(M)$ as the initial condition 
for the normalized Yamabe flow. By \eqref{mapping-heat2}, $e^{t\Delta_0}u_0 \in C^{3,\alpha}(M\times [0,T])$, 
where the heat operator acts without convolution in time. \medskip

We write $u=f + e^{t\Delta} u_0$ and plug this into the Yamabe flow equation \eqref{eq:YamabeFlow}
with rescaled time $\tau= (t-T)$. This yields an equation for $f$ 
\begin{align}
\left[ \partial_t - (n-1) (e^{t\Delta_0}u_0)^{1-N}\Delta_0\right] f=
Q_1(f) + Q_2(f,\partial_t f), \quad u'(0)=0,
\end{align}
where $Q_1$ and $Q_2$ denotes linear and quadratic combinations of the 
elements in brackets, respectively, with coefficients given by polynomials
in $e^{t\Delta_0}u_0$ and $\partial_t e^{t\Delta_0}u_0, \Delta_0 e^{t\Delta_0} u_0$.
Since these coefficients are of higher H\"older regularity $C^{1,\alpha}(M)$, we may set
up a contraction mapping argument in $C^{3,\alpha}$ and thus extend $u$ past the maximal 
existence time $T$ ad verbatim to the proof of Theorem \ref{short}.
This proves long-time existence. 
\end{proof}

\begin{Cor}
In the setting of the above theorem, we have
\[\lim_{t\to \infty} \int_M (S-\rho)^2\, d\Vol_g =0\]
and there exists $u_{\infty}\in L^2(M)$ such that 
\[\lim_{t\to \infty} \int_M (u-u_{\infty})^2\, d\mu=0.\]  
\end{Cor}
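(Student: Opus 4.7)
\textit{For the first limit}, set $\mathcal{F}(t):=\int_M(S-\rho)^2\,d\Vol_g$. I would use two inputs: (i) integrability $\mathcal{F}\in L^1(0,\infty)$ and (ii) a one-sided bound $\mathcal{F}'(t)\leq K$. For (i), \eqref{eq:rhoEvol} reads $\dot\rho=-\tfrac{n-2}{2}\mathcal{F}\leq 0$, while Assumption \ref{Y-assump} together with \eqref{eq:YamabeConst} gives $\rho(t)\geq Y(M,g_0)>0$. Thus $\rho$ decreases monotonically to some $\rho_\infty\geq Y(M,g_0)$, and $\int_0^\infty\mathcal{F}\,dt = \tfrac{2}{n-2}(\rho(0)-\rho_\infty)<\infty$. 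For (ii), I differentiate $\mathcal{F}$ using \eqref{eq:ScalarEvol}, \eqref{eq:DVol}, and integration by parts — the latter is legitimate since $S,1\in H^1(M,g)$ (the constant function via $\Vol_g(M)=1$), making $\int_M\Delta S\,d\Vol_g=0$. The result,
\[
\mathcal{F}'(t) = -2(n-1)\|\nabla S\|_{L^2(M,g)}^2 + 2\!\int_M\!S(S-\rho)^2 d\Vol_g - \tfrac{n}{2}\!\int_M\!(S-\rho)^3 d\Vol_g,
\]
has a non-positive leading gradient term; the remainder is controlled on windows $[T/2,T]$ by the $L^\infty$-bound on $S$ from Theorem \ref{Prop:CurvatureBound} and the uniform-in-$t$ lower bound $\|S_-\|_\infty\leq\|(S_0)_-\|_\infty$ obtained from Proposition \ref{Prop:LowerCurvBound2} by letting $p\to\infty$. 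Combining (i) and (ii), the elementary fact that a non-negative $L^1(0,\infty)$-function with one-sided bounded derivative must tend to $0$ then delivers $\mathcal{F}(t)\to 0$.

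\textit{For the second limit}, the volume normalization $\int_M u^{2n/(n-2)}\,d\mu\equiv 1$ together with $\mu(M)<\infty$ and H\"older's inequality gives a uniform bound $\|u(t)\|_{L^2(M,\mu)}\leq\mu(M)^{1/n}$. From \eqref{eq:YF}, $\partial_t u=-\tfrac{n-2}{4}(S-\rho)u$; rewriting $\|\partial_t u\|^2_{L^2(M,\mu)}$ against $d\Vol_g=u^{2n/(n-2)}d\mu$ and using the local-in-time bounds $u,u^{-1}\in L^\infty(M_T)$ from Proposition \ref{Prop:Upper-u-Bound} and Theorem \ref{Prop:Lower-u-Bound}, I obtain $\|\partial_t u(t)\|^2_{L^2(M,\mu)}\leq C(T)\mathcal{F}(t)$ on each bounded interval $[0,T]$. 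In particular $\|\partial_t u(t)\|_{L^2(M,\mu)}\to 0$ as $t\to\infty$.

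\textit{The main obstacle} is converting this vanishing into an $L^2$-Cauchy property for $\{u(t)\}$, since the natural estimate $\|u(t_2)-u(t_1)\|_{L^2}\leq\int_{t_1}^{t_2}\|\partial_t u\|_{L^2}\,dt$ requires $L^1_t$-integrability of $\|\partial_t u\|_{L^2(M,\mu)}$, which is strictly stronger than the $L^2_t$-integrability that $\mathcal{F}\in L^1_t$ delivers. My plan is to circumvent this by compactness: the uniform $L^2$-bound on $u(t)$ yields weak precompactness in $L^2(M,\mu)$, so some subsequence satisfies $u(t_n)\rightharpoonup u_\infty$; passing to the limit in \eqref{eq:YamabeFlow} using $\mathcal{F}\to 0$ and the local $L^\infty$-bounds identifies $u_\infty$ as a weak solution of the stationary Yamabe equation $L_0 u_\infty=\rho_\infty u_\infty^{(n+2)/(n-2)}$. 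Uniqueness of this limit profile inside the conformal class of $g_0$, combined with Rellich compactness on the admissible $(M,g_0)$, would then upgrade weak to strong $L^2$-convergence and pin $u_\infty$ down independently of the subsequence. The fragility of this uniqueness step in the singular setting — essentially the absence of a substitute for the positive mass theorem — is the same obstruction that stops the authors from attempting full $C^{3,\alpha}$-convergence of the flow itself.
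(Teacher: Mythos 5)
Your treatment of the first limit follows the same basic mechanism as the paper ($\rho$ is monotone, bounded below by $Y(M,g_0)>0$, hence $\mathcal{F}=-\tfrac{2}{n-2}\partial_t\rho\in L^1(0,\infty)$), and your insistence on a one-sided derivative bound before concluding $\mathcal{F}(t)\to 0$ is actually \emph{more} careful than the paper, which simply asserts that integrability of $\partial_t\rho$ forces it to vanish at infinity. However, your derivative bound is not available as stated: controlling $2\int_M S(S-\rho)^2\,d\Vol_g-\tfrac{n}{2}\int_M(S-\rho)^3\,d\Vol_g$ requires a \emph{time-uniform} $L^\infty$ bound on $S$, whereas Theorem \ref{Prop:CurvatureBound} only gives $\|S\|_{L^\infty(M\times[T/2,T])}\leq C(T)$ with a constant that may grow with $T$. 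The same defect affects your claim $\|\partial_t u(t)\|^2_{L^2(M,\mu)}\leq C(T)\mathcal{F}(t)\to 0$, since $C(T)$ there involves $\|u^{-1}\|_{L^\infty(M_T)}$, again only controlled locally in $T$.

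The genuine gap is in the second limit. A uniform $L^2$ bound on $u(t)$ gives only \emph{weak} $L^2$ precompactness, and Rellich compactness cannot upgrade a weakly convergent $L^2$ sequence to a strongly convergent one without an a priori bound in the \emph{stronger} space $H^1(M)$ --- which you never establish. Your proposed substitute (identify the weak limit as a stationary Yamabe solution and invoke uniqueness of the limit profile) cannot be made to work: uniqueness of constant scalar curvature representatives in a conformal class fails already on the round sphere, is nowhere assumed in the paper, and is precisely the convergence issue the authors explicitly decline to address. The missing idea is elementary and entirely different: test \eqref{eq:YamabeFlow} with $u$ itself. Using $\int_M u^{\frac{2n}{n-2}}\,d\mu\equiv 1$, the resulting identity
\begin{equation*}
\frac{n+2}{2n}\int_M \partial_t u^{\frac{2n}{n-2}}\,d\mu+(n-1)\int_M|\nabla u|^2\,d\mu=\frac{n+2}{4}\Bigl(\rho(t)-\int_M u^2 S_0\,d\mu\Bigr)
\end{equation*}
together with $\rho(t)\leq\rho(0)$ and $\int_M u^2\,d\mu\leq 1$ yields $\int_M|\nabla u|^2\,d\mu\leq\frac{n+2}{4}\bigl(\rho(0)+\|(S_0)_-\|_{L^\infty(M)}\bigr)$, a bound genuinely uniform in $t\in[0,\infty)$. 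The compact embedding $H^1(M)\hookrightarrow L^2(M)$ on admissible spaces (\cite[Proposition 1.6]{ACM}) then produces a subsequence $u(t_k)$ converging \emph{strongly} in $L^2(M)$ to some $u_\infty$, with no uniqueness input and no need for $L^1_t$-integrability of $\|\partial_t u\|_{L^2}$. (Note that this --- like the paper's own proof --- only yields subsequential convergence, which is all that is actually established.)
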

\begin{proof}
By \eqref{eq:rhoEvol} we have
\[\partial_t \rho =-\frac{n-2}{2} \int_M (S-\rho)^2\, d\Vol_g.\]
This shows that $\rho(t)$ is monotonously decreasing, and we know it's bounded from below by $Y(M,g_0)>0$, so $\lim_{t\to \infty} \rho(t)$ exists. Thus $\int_0^\infty \partial_t \rho(t) dt < \infty$ and thus $\partial_t \rho(t)$ must converge
to zero as $t\to \infty$. This gives the conclusion on $ \int_M (S-\rho)^2\, d\Vol_g$. By \eqref{eq:YF} we also conclude that
\[\int_M  \left( \partial_t u^{\frac{2n}{n-2}}\right) \, d\mu =-\frac{n}{2} \int_M (S-\rho)u^{\frac{2n}{n-2}} \, d\mu=0,\]
and using $u$ as a test function in \eqref{eq:YamabeFlow} leads to
\[\frac{n+2}{2n}\int_M \partial_t u^{\frac{2n}{n-2}}\, d\mu +(n-1)\int_M \vert \nabla u\vert^2 \, d\mu =\frac{n+2}{4}\left (\rho(t) -\int_M u^2S_0\, d\mu \right),\]
so
\[\int_M \vert \nabla u\vert^2 \, d\mu\leq  \frac{n+2}{4} \left(\rho(0)+\norm{(S_0)_-}_{L^{\infty}(M)}\right),\]
where we have used $\int_M u^{\frac{2n}{n-2}}\, d\mu=1$. This shows that $u$ is uniformly bounded in $H^1(M)$, independent of $t$ for all $t\geq 0$. Since the Sobolev embedding $H^1(M)\hookrightarrow L^q(M)$ is compact for $q<\frac{2n}{n-2}$ (see \cite[Proposition 1.6]{ACM}), we in particular get that $u$ has a convergent subsequence in $L^2(M)$ as $t\to \infty$, and we call this limit $u_{\infty}$.
\end{proof}
\begin{Rem}
The above methods would also show that $\partial_t u^{\frac{n+2}{n-2}}\to 0$ in $L^1(M)$, since we may use \eqref{eq:YF} and the Hölder inequality to write
\begin{align*}
\norm{\partial_t u^{\frac{n+2}{n-2}}}_{L^1(M)} &\leq \frac{n+2}{4} \norm{(S-\rho) u^{\frac{n}{n-2}}}_{L^2(M)}\norm{u^{\frac{2}{n-2}}}_{L^2(M)}\\
&\leq \frac{n+2}{4} \norm{(S-\rho) u^{\frac{n}{n-2}}}_{L^2(M)}.
\end{align*}
We then use the first part of the corollary to show that the right hand side tends to $0$.
\end{Rem}

%%%%%%%%%%%%%%%%%%%%%%%%%%%%%%%%%
\section{Future research directions and open problems}\label{open-section}
%%%%%%%%%%%%%%%%%%%%%%%%%%%%%%%%%

Long time existence alone does \textit{not} guarantee regularity of the limit solution $u_\infty \in L^2(M)$. Indeed, this has to be obstructed for the following two reasons. In the case of closed manifolds, we know that the Yamabe problem is not uniquely solvable on a round sphere, but so far we have not assumed that $(M,g_0)$ is not a sphere. In the singular setting, the Yamabe problem doesn't always have a solution, as demonstrated by Viaclovsky \cite{Orbifold}. We suspect that demanding 
$$
Y(M,g_0)<\lim\limits_{R\to 0} Y(B_R(p),g_0),
$$ 
for all $p\in \overline{M}$ is the required condition in our setting. Under this assumption, Akutagawa, Carron, and Mazzeo \cite{ACM} are able to solve the Yamabe problem for smoothly stratified spaces. For closed manifolds, this condition becomes $Y(M,g_0)<Y(\mathbb{S}^n,g_{\mathbb{S}^n})$, with the round metric $g_{\mathbb{S}^n}$, 
which is the assumption used by Brendle  \cite{Brendle} in his study of the Yamabe flow. Brendle's proof
of convergence of the Yamabe flow relies on the positive mass theorem, which is not available in the singular setting.

\end{document}